\numberwithin{equation}{section}
\newtheorem{theorem}{Theorem}[section]
\newtheorem{proposition}[theorem]{Proposition}
 \newtheorem{definition}[theorem]{Definition}
\newtheorem{lemma}[theorem]{Lemma}
\newtheorem{corollary}[theorem]{Corollary}
\newtheorem{conjecture}[theorem]{Conjecture}
\theoremstyle{remark}
\newtheorem{remark}[theorem]{Remark}
\newtheorem{example}[theorem]{Example}
\renewcommand{\phi}{\varphi}
\def\XXint#1#2#3{{\setbox0=\hbox{$#1{#2#3}{\int}$}
	\vcenter{\hbox{$#2#3$}}\kern-.5\wd0}}
  \renewcommand{\a}{\alpha}
  \renewcommand{\b}{\beta}
\newcommand{\N}{\mathbb N}
\newcommand{\Q}{\mathbb Q}
\newcommand{\Z}{\mathbb Z}
 \newcommand{\C}{\mathbb C}
 \newcommand{\R}{\mathbb R}
 \newcommand{\F}{\mathbb F}
\newcommand{\G}{{\mathbb G}}
 \newcommand{\wh}{\widehat}
\newcommand{\ol}{\overline}
\newcommand{\cut}{\textup{cut}}
\newcommand{\p}{\partial}
\renewcommand{\ker}{\operatorname{Ker}}
\newcommand{\length}{\operatorname{length}}
\newcommand{\Lie}{\operatorname{Lie}}
\newcommand{\rank}{\operatorname{rank}}
\newcommand{\Ker}{\operatorname{Ker}}
\newcommand{\Span}{\operatorname{span}}
\renewcommand{\H}{\mathbb{H}}
\newcommand{\Cut}{\operatorname{Cut}}
\renewcommand{\so}{\mathfrak{so}}
\newcommand{\s}{\sigma}
\newcommand{\la}{\lambda}
\renewcommand{\t}{\tau}
\begin{document}

\title[Length-extremals in rank-4 Carnot groups  ]{New properties of length-extremals in free  step-2  rank-4 Carnot groups
\thanks{2020 Mathematics Subject Classification. 53C17; 53C22;
  49K15.
Key words and Phrases.    Carnot groups, Cut locus,  Sub-Riemannian geodesic. }}
\author{Annamaria Montanari}
\address[Annamaria Montanari]{Dipartimento di Matematica, Alma Mater Studiorum Universit\`a di Bologna, Italy. \textit{Email: } \tt{annamaria.montanari@unibo.it}}

\author{Daniele Morbidelli   }
\address[Daniele Morbidelli (corresponding author)]{Dipartimento di Matematica, Alma Mater Studiorum Universit\`a di Bologna, Italy. \textit{Email: } \tt{daniele.morbidelli@unibo.it} }


 \subjclass[2020]{53C17, 53C22, 49K15}

 \keywords{Step-2 Carnot groups;  Cut locus; Sub-Riemannian geodesics  }
\date{today}
\begin{abstract}
In the free, step-2, rank-4 sub-Riemannian Carnot group $\F_4$, we give a clean expression for length-extremals, we provide an explicit equation for conjugate points, we relate it with the conjectured cut locus   of the origin    $\Cut(\F_4)$. Finally, we give some upper estimates for the cut-time of extremals.
\end{abstract}

\date{\today}
\maketitle
\setcounter{tocdepth}{1}
\tableofcontents

\section{Introduction and main results}

We consider the free step-2, rank-$n$ Carnot group $\F_n =( \R^n\times\Lambda^2\R^n, \cdot) $ where the group law $\cdot$ is defined as
\begin{equation}\label{unouno}
  (x,t)\cdot(\xi,\t )=\Big( x+\xi, t+\tau +\frac 12 x\wedge\xi\Big)
 \end{equation}
for all $(x,t)$ and $(\xi,\t)\in\R^n\times\Lambda^2\R^n$. We equip $\R^n$  with the Euclidean inner product and we discusses some properties of the related  sub-Riemannian length-minimizing curves from $(0,0)$ (see Section~\ref{preliminari}, for precise definitions).  This topic has been discussed in~\cite{Brockett}, \cite{Myasnichenko02}, \cite{MonroyMeneses06}, \cite{RizziSerres16} and \cite{MontanariMorbidelli17}.   It is clear from the mentioned papers that,   in spite of the simple, dimension-free aspect
of~\eqref{unouno}, difficulties of doing analysis in $\F_n$ increase drastically with the \emph{rank}~$n\in\N$.
Before starting a specific description of the paper, let us mention that,   besides the free, step-2 model,     analysis   of length-minimizing properties of curves in   general Carnot groups and    sub-Riemannian  manifolds is a widely studied topic in modern geometric control theory.   See for instance~\cite{AgrachevBarilariBoscain} or~\cite{Sachkov22}, and see below for further references.

In this paper, among free, step-2  Carnot groups, we focus on the rank-4  case $\F_4$. This is a ten-dimensional model, since $\dim( \Lambda^2\R^4)=6$.
A careful study of previous contributions in~$\F_3$, \cite{Myasnichenko02,MontanariMorbidelli17,LiZhang},
shows that techniques in rank-3  case are
not easy to generalize in rank-4 or greater.  For instance, a generic element $t\in\Lambda^2\R^3$ is always decomposable, i.e.~it has the form $u\wedge v$ for suitable  $u,v\in\R^3$. In rank-4 this is a rare circumstance.
   Another related motivation can be seen if we identify $t\in \Lambda^2\R^n $ with the   skew-symmetric matrix $t\in \so(n)$, see below.
     In such case, the exponential of $t$ has a closed form only in $\Lambda^2\R^3$,
   becoming more difficult in $\Lambda^2 \R^n$ if $n\geq 4$.
Here,     to tackle the rank-4 case    we exploit a convenient way to write extremal curves, we write an explicit equation for conjugate points and we analyze some of its properties. We show that this equation factorizes into two factors. One of them essentially detects conjugate points which are also cut points belonging to the set  conjectured by Rizzi and Serres~\cite{RizziSerres16} as a candidate cut locus.
The other captures  a potentially huge set of conjugate points which are not expected  to be cut points, if the   aforementioned   conjecture would be confirmed. Using the explicit form of the equation for conjugate points,  we show that any non-rectilinear length-extremal curve  in $\F_4$ meets   the Rizzi-Serres set     infinitely many times.         For a  subclass of extremals, when suitable ``angular parameters'' are rationally dependent, we also obtain an upper estimate of the cut time.

The main object of our analysis are length-extremal curves from the origin. It is known that  such curves  $\gamma(s)= (x(s), t(s))$ can be obtained by integration of the ODE
\begin{equation}\label{fuss}
 \dot x= u\quad \text{ and }\quad \dot t =\frac 12 x\wedge u
\end{equation}
with initial condition  $\gamma(0)=(x(0), t(0))=(0,0)$ and taking $u:\R\to \R^4$ of the form
\begin{equation}\label{messico}
 u(s) = a_1\cos(2\phi_1 s)+ b_1\sin(2\phi_1 s)+ a_2\cos(2\phi_2s) + b_2\sin(2\phi_2 s),
\end{equation}
where $a_1, a_2, b_1, b_2\in \R^4$  are pairwise orthogonal,   $\phi_1,\phi_2\in\R$,   $|a_1|=|b_1|=:r_1\geq 0 $, $|a_2 |=|b_2|=:r_2\geq  0 $,       and without loss of generality $0\leq \phi_2\leq \phi_1 $.
These curves will be described better in Section~\ref{preliminari}.
  We observe already here that, given $a_1, b_1, a_2$ and $b_2$,  the control curve  $u(s)$ can be seen as a linear flow on a bidimensional torus. Some of the    proofs    later will depend on rationality/irrationality of the flow.   Integrating the control~\eqref{messico}, we get a curve $s\mapsto \gamma(s,a,b,\phi) =(x(s,a,b,\phi),t(s,a,b,\phi))$ of constant \emph{sub-Riemannian speed} $|\dot\gamma(s)|_{\rm{SR}}=:|u(s)|=\sqrt{r_1^2+r_2^2}>0$, whose \emph{sub-Riemannian length} is
\begin{equation*}
  \length(\gamma|_{[0,T]}):=\int_0^T |u(s)|ds = T \sqrt{r_1^2+ r_2^2}.
\end{equation*}
It turns out that for any $T>0$ sufficiently close to $0$, the curve $\gamma$ is a length-minimizer  among all
horizontal curves connecting $\gamma(0)=(0,0)$ and $\gamma(T)$. See Section~\ref{preliminari}.
The \emph{cut time} of the extremal curve $\gamma = \gamma(\cdot, a,b,\phi):\left[0,+\infty\right[\to \F_4$
is defined as follows
\begin{equation*}
 t_\cut(\gamma) =\sup\{T>0: \gamma|_{[0,T]}\text{ is a length-minimizer between $\gamma(0) $ and $\gamma(T)$\}.}
\end{equation*}
In general one can  have $ t_\cut(\gamma) \in\left]0,+\infty\right]$, depending on~$\gamma$.  The cut locus $\Cut(\F_4)\subset\F_4\setminus\{0\}$ is the set of all cut-points $\gamma(t_\cut)$ as $\gamma$ is a length-extremal.
Finding the cut-time of any given extremal and  detecting the cut locus   of a point   is a classical, sometimes difficult problem in sub-Riemannian geometry (at the end of the introduction we will give  some references).

Starting from the form~\eqref{messico} of extremals,~\cite{MonroyMeneses06} calculated  that, given $a_1, b_1, a_2, b_2$ and $ \phi_1, \phi_2$ as above,  the corresponding curve
$\gamma(s, a , b ,  \phi ) =(x(s, a , b ,  \phi ),t(s, a , b  , \phi ))$ has the form
\begin{equation}\label{essenza2}
 \begin{aligned}
x(s, a , b ,  \phi )&
 = sT(\phi_1 s)\big(a_1\cos(\phi_1 s)+b_1\sin(\phi_1 s)\big)
 +
 sT(\phi_2 s)\big(a_2\cos(\phi_2 s)+b_2\sin(\phi_2 s)\big)
\\
t(s,a,b,\phi)& =s^2 U(s\phi_1)a_1\wedge b_1 + s^2 F(s\phi_1, s\phi_2) a_1\wedge a_2
 \\&\quad +s^2 G(s\phi_1, s\phi_2)a_1\wedge b_2
 - s^2 G(s\phi_2, s\phi_1) b_1\wedge a_2
 \\ &\quad +s^2H(s\phi_1, s\phi_2)b_1\wedge b_2+ s^2 U(s\phi_2) a_2\wedge b_2.
 \end{aligned}\end{equation}
Here we defined $T(\phi)=\frac{\sin\phi}{\phi}$,  $U(\phi)=\frac{\phi-\sin\phi\cos\phi}{4\phi^2}$, while   the functions $F,G$ and $H$ are discussed in Section~\ref{preliminari}.    Although we use different notation,  formula~\eqref{essenza2} is analogous to~\cite[Theorem~6.1]{MonroyMeneses06} specialized to $n=4$.

In our first result,  we exploit a change of basis which makes the form of~\eqref{essenza2} much more manageable. This will enable us in the subsequent part to write an explicit equation for conjugate points.
To state our result, introduce for $0<\phi_2<\phi_1$ the function
 \begin{equation}\label{zeta}
    Z(\phi_1,\phi_2)=\frac{\phi_2\cos\phi_2\sin\phi_1-\phi_1\cos\phi_1\sin\phi_2}{2\phi_2(\phi_1^2-\phi_2^2)}.
 \end{equation}

\begin{theorem} \label{sempres} Let $a_1, b_1, a_2, b_2$ be  pairwise orthogonal with $|a_k|=|b_k|$ for $k=1,2$, and let $\phi_1>\phi_2>0$ be given. Consider the extremal $\gamma(s,a,b, \phi)$ in~\eqref{essenza2}.
Let then
\begin{equation}\label{alfacappa}
\left\{ \begin{aligned}
 & \a_k= a_k\sin\phi_k -b_k\cos\phi_k=: a_k s_k -b_k c_k \qquad
 \\&
 \beta_k = a_k\cos  \phi_k  +b_k\sin \phi_k   =: a_k c_k + b_k s_k                                                                  \end{aligned}\quad  \text{for $k=1,2$.}
\right.
\end{equation}
Then we have  \begin{equation}\label{super}
\left\{\begin{aligned}
  x(1, a,b , \phi)& = T(\phi_1)\b_1+T(\phi_2)\b_2
  =: T_1 \b_1 + T_2 \b_2
 \\
 t(1,a,b ,\phi)&= U(\phi_1)\a_1\wedge\beta_1+Z(\phi_1,\phi_2)\a_1\wedge\b_2
 \\& \qquad\qquad +Z(\phi_2,\phi_1)\a_2\wedge \b_1 +U(\phi_2)\a_2\wedge\b_2
 \\& =: U_1 \a_1\wedge\beta_1+Z_{12} \a_1\wedge\b_2
 + Z_{21} \a_2\wedge\b_1+U_2 \a_2\wedge\b_2.
\end{aligned}\right.
 \end{equation}
\end{theorem}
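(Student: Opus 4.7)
The approach is to substitute the change of variables \eqref{alfacappa} into~\eqref{essenza2} at $s=1$ and use bilinearity of the wedge. Writing $s_k=\sin\phi_k$ and $c_k=\cos\phi_k$, observe first that the $2\times 2$ matrix in \eqref{alfacappa} is orthogonal, so it inverts to give
\begin{equation*}
a_k = s_k\alpha_k + c_k\beta_k, \qquad b_k = -c_k\alpha_k + s_k\beta_k.
\end{equation*}
These relations and the identity $s_k^2+c_k^2=1$ are the only facts used.

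For the horizontal component, a direct substitution yields $a_k\cos\phi_k+b_k\sin\phi_k=c_k a_k+s_k b_k=\beta_k$, from which $x(1,a,b,\phi)=T(\phi_1)\beta_1+T(\phi_2)\beta_2$ follows at once from~\eqref{essenza2}.

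For the vertical component, I would split the sum in~\eqref{essenza2} into the two diagonal terms $a_k\wedge b_k$ and the four off-diagonal terms. The diagonal part is immediate: using the inversion above,
\begin{equation*}
a_k\wedge b_k=(s_k\alpha_k+c_k\beta_k)\wedge(-c_k\alpha_k+s_k\beta_k)=(s_k^2+c_k^2)\,\alpha_k\wedge\beta_k=\alpha_k\wedge\beta_k,
\end{equation*}
so the coefficients $U(\phi_k)$ in~\eqref{essenza2} transfer unchanged and account for the $U_1\,\alpha_1\wedge\beta_1$ and $U_2\,\alpha_2\wedge\beta_2$ terms in~\eqref{super}. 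The off-diagonal part, a linear combination of $a_1\wedge a_2$, $a_1\wedge b_2$, $b_1\wedge a_2$, $b_1\wedge b_2$ with coefficients (up to sign) $F,G,H$ evaluated at $(\phi_1,\phi_2)$, expands in the basis $\{\alpha_1\wedge\alpha_2,\ \alpha_1\wedge\beta_2,\ \alpha_2\wedge\beta_1,\ \beta_1\wedge\beta_2\}$. Matching with~\eqref{super} reduces to four trigonometric identities: the coefficients of $\alpha_1\wedge\alpha_2$ and of $\beta_1\wedge\beta_2$ must vanish, while those of $\alpha_1\wedge\beta_2$ and $\alpha_2\wedge\beta_1$ must equal $Z(\phi_1,\phi_2)$ and $Z(\phi_2,\phi_1)$ respectively, with $Z$ as in~\eqref{zeta}.

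The main technical obstacle is precisely the verification of these four identities, which depends on the explicit forms of $F,G,H$ recalled in Section~\ref{preliminari}. Geometrically, the vanishing of the $\alpha_1\wedge\alpha_2$ and $\beta_1\wedge\beta_2$ coefficients reflects the fact that passing from $(a_k,b_k)$ to $(\alpha_k,\beta_k)$ is a rotation by $\phi_k$ inside each 2-plane: it absorbs the phase accumulated by the $k$-th oscillator at $s=1$ and leaves only the genuine cross-frequency contributions, whose skew structure matches the antisymmetric-looking form of $Z$ in~\eqref{zeta}. Once this is observed, the two nontrivial identities become natural ``cross-frequency'' combinations that one expects to simplify to the $Z$ defined in~\eqref{zeta}, and the computation is purely algebraic.
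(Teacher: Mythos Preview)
Your approach is exactly the paper's: substitute the inverse rotation into~\eqref{estremali}, observe that the diagonal terms $U_k\,a_k\wedge b_k$ become $U_k\,\alpha_k\wedge\beta_k$ for free, and then compute the four off-diagonal projections. The setup and the $x$-component are handled correctly.

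The gap is that you stop precisely where the work begins. Saying that the verification of the four identities ``is purely algebraic'' and that the vanishing of the $\alpha_1\wedge\alpha_2$ and $\beta_1\wedge\beta_2$ components is ``natural'' given the phase interpretation is not a proof; the explicit forms of $F,G,H$ are messy enough that no amount of heuristic makes the cancellations obvious. The paper deals with this by first proving a lemma that rewrites $2\phi_1\phi_2(\phi_1^2-\phi_2^2)F$, $2\phi_1\phi_2(\phi_1^2-\phi_2^2)G$ and $2\phi_1\phi_2(\phi_1^2-\phi_2^2)H$ as short polynomials in $s_1,c_1,s_2,c_2$ with coefficients in $\phi_1,\phi_2$; only then does each projection collapse after a routine (but genuine) computation. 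For instance, $\pi_{\alpha_1\wedge\alpha_2}t = Fs_1s_2 - G(\phi_1,\phi_2)s_1c_2 + G(\phi_2,\phi_1)c_1s_2 + Hc_1c_2$ is shown to vanish by grouping the resulting expression as $\phi_1^2 a + \phi_2^2 b + \phi_1\phi_2 c$ and checking $a=b=c=0$; the $\alpha_1\wedge\beta_2$ projection similarly reduces to $(-\phi_1^2 c_1 s_2 + \phi_1\phi_2 s_1 c_2)/\bigl(2\phi_1\phi_2(\phi_1^2-\phi_2^2)\bigr)=Z(\phi_1,\phi_2)$. You would need to carry out these computations (or an equivalent) to complete the argument.
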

Observe the abridged notation $T_k:=T(\phi_k)$,  $U_k:=U(\phi_k) $, $Z_{jk}:= Z(\phi_j, \phi_k) $, $c_k=\cos\phi_k$ and $s_k=\sin\phi_k$ for $k=1, 2$.   This notation will be used  frequently below. This theorem gives the form of the extremal curve at time $s=1$. However, the reparametrization property
  $\gamma(s,a,b,\phi)=\gamma(1, sa, sb, s\phi)$ for all $s>0$ and for all   $a,b,\phi$ gives the form of $\gamma$ in terms of the functions $U $ and $Z $   for all times $s$, see Corollary~\ref{penna}.    Note that   we state Theorem~\ref{sempres} taking strict inequalities $\phi_1\gneqq\phi_2\gneqq 0$, $r_1=|a_1|$ and $ r_2=|a_2|>0$. All degenerate cases will be included in Subsection~\ref{pino}.
  Observe that the term   $t(1,a,b,\phi)$   in~\eqref{super} has only four nonzero terms instead of six, as it was in~\eqref{essenza2}.

 From now on, we will always identify $ \Lambda^2\R^4$ with $  \so (4)$, the vector space of skew-symmetric matrices,  by  extending linearly the identification     $u\wedge v \simeq uv^T-vu^T\in \so(4)$   for all $u,v\in\R^4$, see Section~\ref{preliminari}. Under this identification, it turns out that in the ordered orthonormal basis $ u_1:=\frac{\a_1}{r_1},u_2: =\frac{\a_2}{r_2}$ , $ v_1:=\frac{\b_1}{r_1}$ and $ v_2:=\frac{\b_2}{r_2}$, where $r_k=|\a_k|=|\b_k|$ for $k=1,2$,  the matrix $ t(1,a,b,\phi)\in \so(4)$ appearing in~\eqref{super}, has the block form
\begin{equation}\label{blocco}
   t(1,a,b,\phi)=\begin{bmatrix}
           0  & M \\ -M^T & 0
          \end{bmatrix}\in\so(4), \quad \text{where   }\quad
M=\begin{bmatrix}
r_1^2 U_1 & r_1 r_2 Z_{12}\\ r_1 r_2 Z_{21} & r_2^2 U_2          \end{bmatrix}\in\R^{2\times 2}.
\end{equation}
This makes several computations simpler. However, it must be observed that, in spite of the block-form, eigenvalues and eigenvectors of antisymmetric matrices of the form~\eqref{blocco} are quite complicated to express in terms of the variables $r_k $ and $\phi_k$. See a partial discussion in Remark~\ref{conforme}.

 Starting from the previous result,  we come to the main part of the paper, where
we    analyze whether or not the point $\gamma(1)$ in~\eqref{super} is conjugate to $\gamma(0)=(0,0)$ along $\gamma$. Following~\cite{AgrachevBarilariBoscain}, in order to analyze such condition, we should write $\gamma(s,a,b,\phi)=\exp(s(\xi,\t))$, where $(\xi,\t )\in T^*_{(0,0)}(\R^4\times\Lambda^2\R^4)$ and $\exp:T^*_{(0,0)}(\R^4\times\Lambda^2\R^4)\to  \R^4\times\Lambda^2\R^4  $ denotes the sub-Riemannian exponential.
Then, by definition,  the point $\gamma(1)=\exp(\xi,\t)\in\F_4$ is conjugate if the differential $d_{(\xi,\t)}\exp$ is singular.  However,  in the present paper  we do not use standard Hamiltonian coordinates $(\xi,\t)$, but we take coordinates modeled on the parameters $\a,\b,\phi$  appearing in~\eqref{alfacappa}. This choice will capture automatically  the orthogonal invariance of the problem, which will be described in Subsection~\ref{scorrevole}.

To state our results, start from the extremal $\gamma(\cdot , a,b,\phi)=(x(\cdot , a,b,\phi),t(\cdot , a,b,\phi))$ in~\eqref{essenza2}. Introduce $\a_k$ and $\b_k$ in terms of $a_k,b_k,\phi_k$ by the rotation in~\eqref{alfacappa}.  Let $u_k:=\frac{\a_k}{|\a_k|}=\frac{\a_k}{r_k}  $ and $  v_k=\frac{\b_k}{r_k}$ for $k=1,2$. Then, the point $\gamma(1 , a,b,\phi)$
is uniquely determined by the parameters
\begin{equation*}
(u,v,r,\phi):=( (u_1, v_1, u_2,   v_2), (r_1, r_2, \phi_1, \phi_2))\in\Sigma\times\Omega,
\end{equation*}
where $\Sigma = \{(x_1, x_2, x_3, x_4): x_1, x_2, x_3, x_4\text{ are orthonormal in $\R^4$\}}\subset\R^{16}$ and $\Omega=\{(r_1, r_2, $ $ \phi_1, \phi_2)\in\left]0,+\infty\right[^4$ such that $\phi_1>\phi_2>0\}$.
We may denote then
\begin{equation}\label{bbb}
 \Gamma(u,v,r,\phi):  = \gamma(1, a,b,\phi).
\end{equation}
It turns out that the  ten dimensional manifold $\Sigma\times\Omega$ is diffeomorphic  to the following set of ``nondegenerate'' covectors
$G:=\{(\xi,\t)\in\R^4\times\Lambda^2\R^4: \xi\neq 0$   and $\t$   has four distinct nonzero eigenvalues $\} \subset T^*_{(0,0)}\F_4$, see Proposition~\ref{coperta}.
We will see that  the point in~\eqref{bbb}  is conjugate  to the origin along $\gamma(\cdot, a,b,\phi )$ at time $s=1$ if and only if $d_{(u,v,r,\phi)}\Gamma$ is singular.
A careful calculation of the differential of the map~$\Gamma:\Sigma\times\Omega\to \F_4$ gives then  the following theorem
\begin{theorem}\label{firenze}
 Let $(u,v,r,\phi)=( (u_1, v_1, u_2, v_2),( r_1, r_2, \phi_1, \phi_2))\in\Sigma\times\Omega$. Then, the point
 \begin{equation}\label{arezzo}
 \begin{aligned}
& \gamma(1, a,b,\phi)= \Gamma(u,v,r,\phi)
 \\&  =(r_1 T_1 v_1+ r_2 T_2v_2, r_1^2 U_1 u_1\wedge v_1 + r_1 r_2 Z_{12} u_1\wedge v_2
 + r_1 r_2Z_{21}u_2\wedge v_1 + r_2^2 U_2 u_2\wedge v_2 )
\end{aligned}
 \end{equation}
 is conjugate to $(0,0)$ along $s\mapsto \gamma(s,a,b,\phi)$ if and only if  at least one of the following two square matrices is singular:
\begin{equation}
\label{M11}
M_1 =
\left[\begin{smallmatrix}
   -T_1 & 0 & -r_2^2 T_2 & 0
\\ \\
   0 & - T_2 & 0 & -r_1^2T_1
\\ \\
Z_{21} & - Z_{12} &      r_2^2 U_2  &  -r_1^2 U_1
\\ \\
 Z_{12}&-Z_{21}  & - r_1^2   U_1  &
r_2^2 U_2
\end{smallmatrix}\right]
\end{equation}
or
\begin{equation}\label{trentasei}
 M_2=\left[\begin{smallmatrix}
 0 & -r_2^2 T_2 & T_1 & 0 & -2V_1 & 0
\\ \\
    0 & r_1^2 T_1 & 0 & T_2 & 0 & -2V_2
\\ \\
     -r_2^2 Z_{21}& -r_2^2Z_{12} & 2U_1 & 0  &  \frac{\cos \phi_1}{\phi_1} V_1 & 0
 \\ \\
     -r_2^2 U_2 & r_1^2 U_1 & Z_{12} & Z_{12} &  (\p_1 Z)(\phi_1, \phi_2) &    (\p_2 Z)(\phi_1, \phi_2)
\\ \\
     -r_1^2U_1 & r_2^2 U_2 & -Z_{21} & -Z_{21} & -( \p_2 Z)(\phi_2, \phi_1) & - (\p_1Z)(\phi_2, \phi_1)
\\ \\
    r_1^2 Z_{12} & r_1^2 Z_{21} & 0 & 2U_2 & 0 &\frac{\cos\phi_2}{\phi_2}V_2
\end{smallmatrix}\right].
\end{equation}
\end{theorem}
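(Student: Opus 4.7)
\emph{Plan of proof.} Via the diffeomorphism $\Sigma\times\Omega\simeq G$ of Proposition~\ref{coperta}, the conjugacy of $\Gamma(u,v,r,\phi)$ to the origin is equivalent to the singularity of the differential $d_{(u,v,r,\phi)}\Gamma:T(\Sigma\times\Omega)\to T_{\Gamma}\F_4$, a linear map between $10$-dimensional spaces. The plan is to pick bases on source and target so that this Jacobian becomes block-diagonal with a $4\times 4$ block equivalent to $M_1$ and a $6\times 6$ block equivalent to $M_2$. The identity $\det d\Gamma=\pm\det M_1\det M_2$ then gives the stated criterion.

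On the source, parametrize $T_{(u,v)}\Sigma$ by infinitesimal rotations $\omega\in\so(4)$ acting as $\dot u_j=\omega u_j$, $\dot v_j=\omega v_j$. In the orthonormal frame $(u_1,v_1,u_2,v_2)$ we take the two \emph{intra-plane} generators $u_k\wedge v_k$ and the four \emph{mixing} generators $u_1\wedge u_2$, $u_1\wedge v_2$, $v_1\wedge u_2$, $v_1\wedge v_2$; together with the parameter derivatives $\p_{r_k},\p_{\phi_k}$ these form a basis of $T(\Sigma\times\Omega)$. On the target $\R^4\oplus\Lambda^2\R^4$ split
\begin{equation*}
W_A=\Span\{u_1,u_2,\,u_1\wedge u_2,\,v_1\wedge v_2\},\qquad W_B=\Span\{v_1,v_2,\,u_1\wedge v_1,\,u_2\wedge v_2,\,u_1\wedge v_2,\,u_2\wedge v_1\},
\end{equation*}
of dimensions $4$ and $6$ respectively.

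The crucial structural fact, to be verified by direct calculation using $(u_j\wedge v_\ell)w=(v_\ell\cdot w)u_j-(u_j\cdot w)v_\ell$ together with the Leibniz rule applied to the wedges in $t(1)$, is that the two intra-plane rotations and the two mixing rotations $u_1\wedge v_2$, $v_1\wedge u_2$ send $d\Gamma$ into $W_A$, while the remaining two mixing rotations $u_1\wedge u_2$, $v_1\wedge v_2$ together with all four parameter derivatives send $d\Gamma$ into $W_B$. For instance, $\omega=u_1\wedge v_1$ yields $\dot u_1=-v_1$, $\dot v_1=u_1$, whence $\dot\Gamma=r_1 T_1 u_1-r_1 r_2 Z_{21}\,u_1\wedge u_2-r_1 r_2 Z_{12}\,v_1\wedge v_2\in W_A$; whereas $\omega=u_1\wedge u_2$ gives a tangent in $W_B$ with no component along $u_k$ or the intra-plane wedges. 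This pairing, essentially forced by the orthogonal invariance discussed in Subsection~\ref{scorrevole}, produces the $4\oplus 6$ block-diagonal form.

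Reading off each block in the chosen bases then identifies $M_1$ (up to an innocuous rescaling of rows and columns by factors of $r_k$, which does not affect singularity): its entries $T_k$, $U_k$, $Z_{jk}$ arise from the action of the four source directions on the basis vectors of $W_A$. Similarly the complementary block is $M_2$: the contributions of $u_1\wedge u_2$, $v_1\wedge v_2$ and $\p_{r_k}$ produce the $T_k,U_k,Z_{jk}$ entries, while the derivatives $\p_{\phi_k}$ applied to $T_k,U_k,Z_{jk}$ yield the $V_k$ and $\p_j Z$ entries. The main obstacle is the bookkeeping needed to verify the block structure cleanly and to match the resulting entries to~\eqref{M11} and~\eqref{trentasei} with the correct signs, $r_k$-factors, and derivative identities; once this is settled, block diagonality is immediate and the singularity of $d\Gamma$ is equivalent to $\det M_1=0$ or $\det M_2=0$.
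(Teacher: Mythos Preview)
Your approach is essentially the same as the paper's: compute $d\Gamma$ in the basis of infinitesimal rotations $D_{\circlearrowleft}$ and parameter derivatives, observe that the four directions $\circlearrowleft u_1v_1,\circlearrowleft u_2v_2,\circlearrowleft u_1v_2,\circlearrowleft u_2v_1$ land in your $W_A$ while $\circlearrowleft u_1u_2,\circlearrowleft v_1v_2,\p_{r_k},\p_{\phi_k}$ land in $W_B$, and read off the blocks $M_1,M_2$ after clearing the positive $r_k$-factors. The paper carries out all ten column computations explicitly (what you call ``bookkeeping''); your sample computation for $\omega=u_1\wedge v_1$ is correct up to a harmless overall sign coming from your convention $\dot u_j=\omega u_j$ versus the paper's $u_1'(0)=v_1$.
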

Note that $M_1$ and $M_2$ do not depend on $u_1, v_1, u_2, v_2$, by the orthogonal invariance of the model, see Section~\ref{preliminari}. This factorization property while calculating conjugate points is not unexpected. Indeed it already appears in  Myasnichenko's paper in rank-3  case, see~\cite[Eq.~(12), p.~586]{Myasnichenko02}. In rank-3  case only one of the  factors gives conjugate points which belong to the cut locus. In our rank-4 case this is actually an open question, see below.
As expected, Theorem~\ref{firenze}   does   not give  information on whether the point $\Gamma(u,v,r,\phi)$ is the \emph{first conjugate point}. \footnote{Recall that a point $\gamma(\bar s)$ on an extremal $\gamma$ is the first conjugate point if there are no other conjugate points in $\left]0,\bar s  \right[$. See~\cite[Definition~8.45]{AgrachevBarilariBoscain} for the precise definition.}

In the   subsequent part of the paper,   we discuss condition $\det M_1=0$ and we analyze how this condition relates with the fact that $\Gamma(u,v,r,\phi)$ belongs to $C_4$, the candidate cut locus  proposed  by Rizzi and Serres~\cite{RizziSerres16}, see~\eqref{ciquattro}.
In order to state our result, introduce the functions
\begin{equation}
  A(\phi_1, \phi_2)=T_1 U_1 \big(T_1Z_{12}-T_2 U_1\big) \quad\text{and}\quad  B(\phi_1, \phi_2)= T_2 Z_{12} \big(T_1Z_{12} - T_2 U_1\big),
\end{equation}
where as usual $T_k=T(\phi_k)$ and $Z_{ij}=Z(\phi_i,\phi_j)$ for $i,j,k=1,2$.
  The following theorem extracts
 some useful information   concerning points where $  M_1$ is singular.
\begin{theorem} \label{MMM}  Let $(u_1,v_1, u_2, v_2)\in\Sigma$ and consider $r_1, r_2>0$ and $0<\phi_2< \phi_1$. Take the point  $(x,t):=\Gamma(u,v,r,\phi)$ appearing in~\eqref{arezzo}. Identifying as usual $ \Lambda^2\R^4$ and~$  \mathfrak{so}(4)$,
the following statements are equivalent:
\begin{enumerate}
 \item \label{primavera2}  $\det M_1=0$.
 \item \label{autunno2}
 $  t{\,}^2 x \in\Span\{x\}$.
\item The following equation holds
\begin{equation}\label{AB}
 A(\phi_1, \phi_2)r_1^4  + \{ B(\phi_1, \phi_2)- B(\phi_2, \phi_1) \}
  r_1^2 r_2^2  -A(\phi_2, \phi_1)r_2^2=0.
\end{equation}
 \end{enumerate}
\end{theorem}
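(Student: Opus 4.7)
The natural strategy is to prove the chain $(1)\Leftrightarrow(3)\Leftrightarrow(2)$ by two independent computations: expanding the $4\times 4$ determinant of $M_1$ to obtain the polynomial in~\eqref{AB}, and separately translating the geometric condition~(2) into an eigenvector problem for a $2\times 2$ matrix whose associated polynomial agrees with the same expression.

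For $(1)\Leftrightarrow(3)$, I would expand $\det M_1$ by cofactors along the first row of~\eqref{M11}, which has only two nonzero entries $-T_1$ and $-r_2^2 T_2$, reducing the computation to two $3\times 3$ determinants; each of these I would in turn expand along its first row. Grouping the resulting terms by powers of $r_1^2$ and $r_2^2$, direct inspection shows that the coefficient of $r_1^4$ equals $T_1 U_1(T_1 Z_{12}-T_2 U_1)=A(\phi_1,\phi_2)$, the coefficient of $r_2^4$ equals $-T_2 U_2(T_2 Z_{21}-T_1 U_2)=-A(\phi_2,\phi_1)$, and the coefficient of $r_1^2 r_2^2$ rearranges, once the definitions of $B(\phi_1,\phi_2)$ and $B(\phi_2,\phi_1)$ are expanded, into their difference. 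This reproduces~\eqref{AB} as an explicit polynomial identity for $\det M_1$.

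For $(3)\Leftrightarrow(2)$, I would use the block structure~\eqref{blocco} of $t\in\so(4)$ in the orthonormal basis $(u_1,u_2,v_1,v_2)$. Since $t$ interchanges $\Span\{u_1,u_2\}$ with $\Span\{v_1,v_2\}$, the square $t^2$ preserves each of these two planes and acts on $\Span\{v_1,v_2\}$ as $-M^T M$. As $x=r_1T_1 v_1+r_2 T_2 v_2$ in~\eqref{arezzo} lies in this plane, condition~(2) is equivalent to $(r_1T_1,r_2T_2)^T$ being an eigenvector of the symmetric matrix $M^TM$. For a symmetric $2\times 2$ matrix $N$, this eigenvector condition reads $(N_{11}-N_{22})\alpha\beta+N_{12}(\beta^2-\alpha^2)=0$. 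Computing the three entries of $M^TM$ from~\eqref{blocco} and substituting $\alpha=r_1T_1$, $\beta=r_2T_2$, one obtains, after collecting by powers of $r_1$ and $r_2$, exactly the polynomial~\eqref{AB} (up to an overall sign).

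Both expansions are routine, but the main bookkeeping obstacle is to verify that the mixed coefficient of $r_1^2 r_2^2$ produced by each computation really does simplify to $B(\phi_1,\phi_2)-B(\phi_2,\phi_1)$: the cross terms $T_1T_2 Z_{12}^2$ and $T_1T_2 Z_{21}^2$ must be paired correctly with $T_2^2 Z_{12}U_1$ and $T_1^2 Z_{21}U_2$, respectively, to recover the defining factorizations of $B$. Once these two polynomial identities are in place, the equivalence of (1), (2) and (3) follows immediately.
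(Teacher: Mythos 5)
Your proof is correct and follows essentially the same route as the paper's: the paper reduces $\det M_1$ by column operations to a $2\times2$ determinant and identifies that determinant with the orthogonality condition $\langle tx,ty\rangle=0$ for $y\perp x$ in $\Span\{v_1,v_2\}$ — which is exactly your eigenvector condition for $M^TM$, since $\langle M^TM\xi,\eta\rangle=\langle M\xi,M\eta\rangle$ with $\eta\perp\xi$ — and then expands everything into the polynomial~\eqref{AB}. Note only that, as your expansion implicitly confirms, the last term of~\eqref{AB} should read $-A(\phi_2,\phi_1)\,r_2^4$ (compare~\eqref{AB2}); this is a typo in the statement, not a gap in your argument.
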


As expected, equation~\eqref{AB} is invariant with respect to exchanging of indices $1$ and $2$. It degenerates correctly to the known formulas from \cite{MontanariMorbidelli17} in $\F_3$, as $\phi_2\to 0$. Namely, it becomes
 $\frac{r_2^2}{r_1^2}=-\frac{T(\phi_1)  U(\phi_1)}{V(\phi_1)}$, where $T$ and $U $ appeared above, while $V(\phi_1)=Z(\phi_1, 0)$, see Remark~\ref{aosta} and~\eqref{azzero}.

Condition (\ref{autunno2}) is  rather interesting, because it relates with the Rizzi-Serres conjectured set~$C_4$.
Recall that  in the paper \cite{RizziSerres16}, Rizzi and Serres conjectured that the cut locus $\Cut(\F_4): =\{\gamma(t_\cut ):$ $\gamma$ is a length-extremal  and $t_\cut<\infty\}$ agrees with the  set $C_4 =  \Sigma_1\cup\Sigma_2\cup\Sigma_3$,
 where,
 \begin{equation}\label{ciquattro}
\begin{aligned}
  \Sigma_1 &=\{(x_1 u_1 + x_2 u_2, \la_1 u_1\wedge u_2+\la_2 u_3 \wedge u_4):\la_1,\la_2>0\quad   \la_2\neq\la_1
 \; \text{and }(x_1, x_2)\in\R^2\},
\\  \Sigma_2& =\big\{ (x, \la (u_1\wedge u_2 + u_3\wedge u_4)):\la>0 \text{ and $ x\in\R^4$}\big\},
\\ \Sigma_3 &=\big\{(x_1 u_1, \la u_3\wedge u_4): \la>0,\quad x_1\in\R \big\}.
\end{aligned}\end{equation}
 In the previous formula,  $ u_1, u_2, u_3, u_4 $  denote    any orthonormal family in $\R^4$.
 By orthogonal invariance,  it is rather easy to see that $C_4\subset \Cut(\F_4)$  (see~\cite[Proposition~6]{RizziSerres16}).  The opposite inclusion is an open problem. See Section~\ref{preliminari} for a more detailed discussion and for the dimension-free  definition of $C_n\subset\F_n$ formulated in \cite{RizziSerres16}.

Let us come  now to the aforementioned relation between condition~(\ref{autunno2}) of Theorem~\ref{MMM} and the set~$C_4$. It is not difficult to see that, if $(x,t)\in\F_4$ and $\rank(  t)= 4$, then $  t^2 x\in\Span\{x\}$ if and only if $(x,t)\in\Sigma_1\cup\Sigma_2$ (see Remark~\ref{competizione}).  In other words,  if $  t$ has rank-4, then any of the equivalent conditions in Theorem~\ref{MMM} is
 equivalent to $(x,t)\in C_4$. Note that if $\rank(   t) =2$, then the equivalence fails. For instance, the point $(x_1 e_1, e_1\wedge e_2 )$ satisfies $ t^2 x\in\Span\{x\}$, but if $x_1\neq 0$, then $(x,t)\notin C_4$, and the discussion of Subsection~\ref{pino} also shows that $(x,t)\notin\Cut(\F_4)$.
Analyzing equation~\eqref{AB}, it turns out however that  given $0<\phi_2<\phi_1$, there is $\ol s>0$ such that the curve  $\gamma(s, u,v,r,\phi)=:(x(s), t(s))$ satisfies $\rank(  t(s))=4$   for~$s>\ol s$ and for any $r,u,v$. See Lemma~\ref{deboluccio}.
We expect  that  $\ol s=0$, but the proof would be based on the achievement of a rather difficult inequality   discussed  in Remarks~\ref{messina} and~\ref{Taylor}.

It is well known that   in the Heisenberg group~$\F_2$
with coordinates $(x,y,t)\in\R^3$, any non-rectilinear length-extremal from the origin touches the $t$-axis infinitely many times, in a periodical way. The same happens in the rank-3  case, as shown in~\cite{MontanariMorbidelli17,Myasnichenko02}: the conjugate locus is touched infinitely many times,  but  periodicity no longer holds.
In the subsequent part of the paper, starting from equation~\eqref{AB}, we show an analogous phenomenon in $\F_4$. Observe that, if $\phi_1$ and $\phi_2$ are rationally dependent, then it is trivial to see that the set $C_4$ is reached infinitely many times. This follows from the fact that the function $s\mapsto x(s)$ in~\eqref{essenza2} is periodic, and there is $\bar s>0$ such that $x(k\bar s)=0\in\R^4$ for all $k\in\N\cup\{0\}$.
The rationally independent case requires more work.
In view of the  greater technical difficulty, we get the result for large times only, using the behaviour at infinity of equation~\eqref{AB}. The theorem below   is meaningful for \emph{strictly normal} curves, which are those such that $r_1$ and $r_2$ are both strictly positive, $\phi_1\gneqq   \phi_2\geq 0$,   see Subsection~\ref{duetre}.
 Concerning abnormal extremals,  by second-order analysis of the end point map, in~$\F_4$---as in all step-two Carnot groups---abnormal extremals are also normal,   see~\cite[Theorem~12.12 and Corollary~12.14]{AgrachevBarilariBoscain}.
Therefore, in our case, points of an  abnormal length-extremal are     conjugate by a standard easy argument. As a final remark, note that  the case $\phi_2=0$ is also already known, being  contained in~\cite{Myasnichenko02,MontanariMorbidelli17}.

\begin{theorem}\label{sarebbero}
 Let $u(s)= \sum_{k=1}^2  a_k\cos(2\phi_k s) + b_k\sin(2\phi_k s)$ be a  strictly normal   control.  Consider the corresponding trajectory $\gamma(\cdot,a ,b , \phi )$. Then there is a sequence $s_j\to +\infty$ such that  $\gamma(s_j,a,b,\phi) \in   C_4$ for all $j\in\N$.
\end{theorem}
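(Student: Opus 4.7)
The plan is to reduce the problem to finding infinitely many zeros of a single scalar function $F(s)$ coming from~\eqref{AB}, and then to dominate its leading oscillatory asymptotics at infinity by the Weyl equidistribution (or periodicity) of the orbit $(s\phi_1,s\phi_2)\bmod 2\pi$ on the torus~$T^2$.

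\emph{Setup.} By Corollary~\ref{penna} the extremal at time $s$ corresponds to the rescaled parameters $(sr_1,sr_2,s\phi_1,s\phi_2)$; substituting these into~\eqref{AB} and extracting the factor $s^4$, the singularity of $M_1$ along $\gamma$ at time $s$ is equivalent to $F(s)=0$, where
$$F(s):=A(s\phi_1,s\phi_2)r_1^4+\bigl(B(s\phi_1,s\phi_2)-B(s\phi_2,s\phi_1)\bigr)r_1^2r_2^2-A(s\phi_2,s\phi_1)r_2^4.$$
Lemma~\ref{deboluccio} gives $\bar s>0$ with $\rank(t(s))=4$ for $s>\bar s$, and by Remark~\ref{competizione} any zero of $F$ beyond $\bar s$ places $\gamma(s)$ in $\Sigma_1\cup\Sigma_2\subset C_4$. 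It therefore suffices to produce $s_j\to+\infty$ with $F(s_j)=0$.

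\emph{Generic case.} Using $T(\phi)=\sin\phi/\phi$, $U(\phi)=\tfrac{1}{4\phi}-\tfrac{\sin(2\phi)}{8\phi^2}$ and the estimate $Z(\phi_1,\phi_2)=O(1/\phi^2)$ from~\eqref{zeta}, a direct expansion gives $A(s\phi_1,s\phi_2)\sim -\sin(s\phi_1)\sin(s\phi_2)/(16s^4\phi_1^3\phi_2)$ and $B(s\phi_1,s\phi_2)=O(1/s^5)$, hence
$$s^4 F(s)=-C\sin(s\phi_1)\sin(s\phi_2)+O(1/s),\qquad C:=\frac{r_1^4\phi_2^2-r_2^4\phi_1^2}{16\phi_1^3\phi_2^3}.$$
The rationally dependent case is already known by periodicity (as recalled before the theorem). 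In the rationally independent case, Weyl's equidistribution theorem applied to the linear flow $s\mapsto(s\phi_1,s\phi_2)\bmod 2\pi$ ensures that $\sin(s\phi_1)\sin(s\phi_2)$ attains values $\geq 1/2$ and $\leq-1/2$ at arbitrarily large $s$. When $C\neq 0$, the leading term dominates the remainder and the intermediate value theorem yields infinitely many zeros of $F$.

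\emph{Degenerate case and main obstacle.} The delicate situation is $C=0$, i.e.\ $r_1^4\phi_2^2=r_2^4\phi_1^2$, where the leading asymptotic vanishes. Here I would push the expansion one order further, keeping the subleading $-\sin(2\phi)/(8\phi^2)$ contribution to $U$ and the leading $O(1/s^5)$ oscillatory piece of $B(s\phi_1,s\phi_2)-B(s\phi_2,s\phi_1)$. A computation then yields $s^5 F(s)=R(s\phi_1,s\phi_2)+O(1/s)$ for a zero-mean trigonometric polynomial $R$ on $T^2$; restricting to the slice $\theta_2=0$ reduces $R$ to $-K\sin^3\theta_1$ with $K>0$ proportional to $\phi_1-\phi_2$, so $R\not\equiv 0$ and attains both signs on $T^2$. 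The same equidistribution/IVT argument then produces $s_j\to+\infty$ with $F(s_j)=0$, completing the proof.
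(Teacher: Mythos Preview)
Your proposal is correct and follows essentially the same route as the paper: reduce to zeros of the scalar function $F(s)$ from~\eqref{AB}, handle the generic case $C\neq 0$ via the leading $s^{-4}$ oscillation and density/equidistribution of the toral flow $(s\phi_1,s\phi_2)$, and in the degenerate case $C=0$ push to order $s^{-5}$ and read off a nonzero $\sin^3\theta_1$ term on the slice $\theta_2\in\pi\Z$ (the paper does this by picking $s_n=n\pi/\phi_2$ explicitly rather than via equidistribution, but the idea is the same). One small computational slip: the constant $K$ is proportional to $1/(\phi_1-\phi_2)$ rather than $\phi_1-\phi_2$, but this is immaterial since only $K\neq 0$ is needed.
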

As we already said, the proof  is easy if $\phi_1$ and $\phi_2$ are rationally dependent. The even more particular case $\phi_1=2\phi_2>0$ appears in Brockett's paper \cite{Brockett}. In Section~\ref{prima}, we prove the general rationally independent case.
Although Theorem~\ref{sarebbero} does not give information about the cut-time, it   implies immediately that the cut time is finite for any non-rectilinear length-extremal. This confirms a result by Kishimoto \cite{Kishimoto}.
\footnote{  Note that analogous finiteness results fail in Carnot groups of step three and higher. Namely, there are geodesics with $t_{\cut}=+\infty$ and which are not integral curves of left-invariant horizontal vector fields. See \cite{ArdentovSachkov14}, \cite[Section 7.2]{HakavuoriLeDonne}, \cite{BravoDoddoli}.}

Before closing the introduction, we mention some further references on the problem
 of  the cut locus in  nonfree  Carnot groups. In the step-2 case, we mention the papers~\cite{MBarilariBoscainNeel16}, \cite{BarilariBoscainGauthier12}, \cite{AutenriedMolina16} and \cite{MontanariMorbidelli24}. See also~\cite{Li} for a different approach.
 In step-3 we mention~\cite{ArdentovSachkov14} on the Engel group. All these references and many others, also outside
 the setting of Carnot groups, are discussed in the comprehensive survey~\cite{Sachkov22}.

The structure of the paper is the following: in Section~\ref{preliminari} we write extremal curves, we analyze the change of basis useful to simplify them. In Section~\ref{Piperno} we  find  the equation for conjugate points. In Section~\ref{quartina} we analyze  conjugate points coming from the first factor $\det M_1=0$, those belonging to the Rizzi-Serres set. Section~\ref{prima} is devoted to the proof of Theorem~\ref{sarebbero}.

\section{General preliminaries  and extremal curves}  \label{preliminari}
 Let us consider in  $  \R^4\times\Lambda^2\R^4$ the Lie group law
 \begin{equation*}
  (x,t)\cdot(\xi,\t )=\Big( x+\xi, t+\tau +\frac 12 x\wedge\xi\Big).
 \end{equation*}
 It turns out that $\F_4=(\R^4\times\Lambda^2\R^4,\cdot)$ is a model for the free step-2  Carnot group of rank~4.  See the monographs~\cite{BonfiglioliLanconelliUguzzoni,AgrachevBarilariBoscain}.  We say that a Lipschitz curve $\gamma=(x,t):[0,T]\to \F_4$ is horizontal if it satisfies almost everywhere the ODE
 \begin{equation}
 \label{equino}
 \dot x = u,\qquad \dot t=\frac 12 x\wedge u.
\end{equation}
To define a sub-Riemannian structure, we fix on $\R^4$ the standard Euclidean inner product.
Then, the length of a horizontal curve $\gamma$ on $[0,T]$ is defined as
$
 \length(\gamma):=\int_0^T |u(s)|ds.
$
Minimizing length we obtain the sub-Riemannian distance
$ d((x,t),(\xi,\t))$ $ =\inf\{\length(\gamma): \gamma$    connects $(x,t)$ and $(\xi,\t)\}$.
It is well known that $d((x,t),(\xi,\t))$ is finite and it is a minimum for all~$(x,t)$ and $(\xi,\t)\in\F_4$.

As already mentioned in the Introduction, we identify $\Lambda^2 \R^n\simeq \so(n)$  extending linearly the identification   $u\wedge v \simeq   uv^T-vu^T\in \so(n)$  for all $u,v\in\R^n$.

\subsection{Hamiltonian approach, extremal controls and conjugate points} In order to write length-minimizing curves, we follow the Hamiltonian approach, see~\cite[Chapter~13.1]{AgrachevBarilariBoscain}.
 Let $e_1, \dots, e_n$ denote the standard basis of $\R^n$.  Given the  orthonormal  frame of horizontal vector fields in $\F_n$,
$X_j(x,t) =\big(e_j,\frac 12 x\wedge e_j\big)$ for $j=1,\dots,n$, we construct the functions
 $u_j:T^* \F_n\to \R   $ letting  $u_j(x,t,\xi,\t):=\langle  (\xi,\t),X_j(x,t)\rangle  $.  Here on $\Lambda^2\R^n$ we take the standard inner product making $e_j\wedge e_k$ an orthonormal system, as $1\leq j<k\leq n$. We are also identifying $T^*\F_n\simeq \F_n\times\F_n$.
The related sub-Riemannian Hamiltonian has the form
\[H(x,t,\xi,\t)=\frac 12\sum_{k=1}^nu_k(x,t,\xi,\t)^2=\frac 12\sum_{k=1}^n\langle X_k(x,t),(\xi,\t)\rangle^2.\]
Integrating the Hamiltonian system
\begin{equation}\label{ham}
\begin{aligned}
 \left\{ \begin{aligned}
&(\dot x,\dot t)=\Big(\frac{\p H}{\p \xi} (x,t,\xi,\t), \frac{\p H}{\p \t} (x,t,\xi,\t)\Big)
\\& (\dot\xi, \dot\t)=-
\Big(\frac{\p H}{\p x} (x,t,\xi,\t), \frac{\p H}{\p t} (x,t,\xi,\t)\Big)
\end{aligned}\right.
 \text{ with  }
 \left\{\begin{aligned}
&(x(0), t(0))=(0,0)\\&(\xi(0), \t(0))=(\xi,\t),
\end{aligned}\right.
\end{aligned}
\end{equation}    we obtain a  length-extremal curve $\gamma(\cdot, \xi,\t)=(x(\cdot,\xi,\t), t(\cdot,\xi,\t))$ starting from the origin.  Since $\F_4$ has step two, there are no strictly abnormal length-minimizers (see~\cite[Corollary~12.5]{AgrachevBarilariBoscain}). Thus,  it turns out that all length-extremals from the origin have the form $\gamma(\cdot, \xi, \t)$  and are parametrized by their
 initial covector $(\xi,\t)\in T^*_{(0,0)}\F_n$. They are defined for all $s\in\R$ and they enjoy property $\gamma(s,\xi,\t)=\gamma(1,s\xi, s\t)$ for all $s\in\R$ and $(\xi,\t)\in T_{(0,0)}^*\F_n$. For any $ \xi$ and $\t$, the extremal $\gamma(
 \cdot, \xi,\t)$ is a length-minimizer on some nontrivial interval   $[0,T]  $.

 Following \cite[Section~8.6]{AgrachevBarilariBoscain}, we define then the \emph{sub-Riemannian exponential} $\exp: T^*_{(0,0)}\F_n\to \F_n$, as $\exp(\xi,\t):=\gamma(1,\xi,\t)$.
 \begin{definition}\label{labello}
Given $(\xi,\t)\in\F_4$, we say that the point $\gamma(\ol s, \xi, \t)=\exp(\bar s \xi,\bar s \t)$ is \emph{conjugate} to $(0,0)$
 along $\gamma(\cdot,
 \xi,  \t )$ if the differential of $\exp$ at point $(\bar s  \xi,\bar s \t )$ is singular, i.e.
 \begin{equation}
d_{(\bar s  \xi,\bar s \t )}\exp \text{ is singular.}\end{equation}
Given $(\xi,\t)\in\F_n$  and the corresponding curve $\gamma=\gamma(\cdot,\xi,\t)$, we define
$
 t_\cut(\gamma) =\sup\{T\geq 0: \gamma|_{[0,T]}$  minimizes length
among  all $\gamma$ connecting $\gamma(0)$ and $\gamma(T)\}$.
Finally, the \emph{cut locus of the origin} of $\F_n$ is
\begin{equation*}
 \cut(\F_n):= \{\gamma( t_\cut): \text{$\gamma$ is an extremal and $t_\cut(\gamma)<\infty$}\}.
\end{equation*}
\end{definition}
Concerning the definition above, it is known that $t_\cut\in\left]0,+\infty\right]$.  Let us stress again that the inequality $t_{\cut}\gneqq 0 $ follows from the fact that there are no strictly abnormal extremals.

 Integration of the Hamiltonian system~\eqref{ham}  gives that  the extremal control $\gamma(\cdot, \xi, \t)$ is obtained by taking the  control
 \begin{equation}\label{letto}
  u(s,\xi,\t)= e^{-s \t}\xi
 \end{equation}
in the ODE~\eqref{equino} (See \cite[Section~13.3]{AgrachevBarilariBoscain}).
In this paper, we work on extremal controls of the  form~\eqref{letto}. Using spectral  theory of skew-symmetric matrices, it turns out  that, given a control of the form~\eqref{letto} in $\F_4$, we have
\begin{equation}\label{modellino}
 u(s)=a_1\cos(\la_1 s)+b_1\sin(\la_1 s)+a_2\cos(\la_2 s)+b_2\sin(\la_2 s)
\end{equation}
where $\la_1\geq \la_2\geq 0$, $r_k:=|a_k|=|b_k|\geq 0$ for $k=1,2$, and $a_1, a_2, b_1, b_2$ are pairwise orthogonal. See~\cite{AgrachevBarilariBoscain}, or see also the previous papers~\cite{Myasnichenko02,MonroyMeneses06,MontanariMorbidelli17,RizziSerres16}, where such extremal controls are already used.
\begin{definition}\label{generico}   We say that the extremal $u$ in~\eqref{modellino} is generic if $r_1, r_2 \gneqq 0$ and $\la_1\gneqq \la_2\gneqq 0$.
\end{definition}
A substantial part of our work will take place on generic extremals.

\subsection{Abnormal curves in \texorpdfstring{$\F_4$}{F4}}\label{duetre}
In order to talk about conjugate points, we need to discuss briefly abnormal extremal curves. For any given control  $u\in L^2((0,1), \R^4) $, define   the \emph{endpoint map} $E(u)=\gamma_u(1)$, where $\gamma_u=(x_u, t_u)$ is obtained by integration of~\eqref{equino}. It turns out that $E: L^2\to \F_4$ is a smooth map, see~\cite{AgrachevBarilariBoscain}. We say that a control $u$ is \emph{abnormal} if the differential
$d_u E: L^2\to \F_4$ is singular. By   well known theory  of Carnot groups,   an extremal control of the form~\eqref{modellino} is abnormal if and only if it takes the form
\begin{equation}\label{abno}
 u(s) =a_1\cos(\la_1 s)+b_1\sin(\la_1 s),
\end{equation}
where, as in~\eqref{modellino}, $a_1$ and $b_1$ are orthogonal and have the same norm and $\la_1\geq 0$. See for example~\cite{LeDonneLeonardiMontiVittone} and~\cite{MontanariMorbidelli17}.
Note that if $u(\cdot, \xi, \t)$ is an extremal abnormal control  (which is always  normal, because we work in  a step-2 group),  then for all $s> 0  $ the point $\exp(s\xi,s\t)$ is conjugate in the sense of Definition~\ref{labello}. This easy fact is observed in~\cite[Remark~8.46]{AgrachevBarilariBoscain}. Note that given the abnormal control~\eqref{abno}, the corresponding curve $\gamma_u$ is contained in the Heisenberg subgroup $\Lie(a_1, b_1)$ (see Subsection~\ref{pino}). Extremal controls of the form $u(s)= a_1\cos(\la_1 s)+ b_1\sin(\la_1 s)+ a_2$  with $\la_1>0$ and $a_1, b_1, a_2\neq 0 $ are instead not abnormal.

\subsection{Symmetries of \texorpdfstring{$\F_n$}{Fn} and the conjectured cut locus}\label{scorrevole}
Given $R\in O(n)$, we define the linear map  $\bar R:\F_n\to \F_n$ by $(x,t)\mapsto  (Rx,R t R^T )$. It is easy to see that the class  of horizontal curves and their length are invariant under the map  $\bar R$    for all $R\in O(n)$. Consequently, $d((x,t), (x', t'))= d(\bar R(x,t), \bar R(x', t'))$ for all pair of points and for all $R\in O(n)$.

It was conjectured by Rizzi and Serres~\cite{RizziSerres16} that $ \Cut(\F_n)=C_n$, where $C_n\subset\F_n$ is defined as follows.
\begin{equation}\label{jos}
\begin{aligned}
C_n =\{(x,t): & \text{ there is $R\in O(n)$, $R\neq I_n$ such that $\bar R(x,t) = (x,t)$}
 \\&\text{ and $R|_{\Ker t}=I|_{\Ker t}$}\}.
\end{aligned}
\end{equation}
For the proof that $C_n\subset \Cut(\F_n)$, see~\cite[Proposition~6]{RizziSerres16}. The equality $C_n =\Cut(\F_n)$
 is an open conjecture. For the case of our interest $n=4$, the set $C_4$ is described in~\eqref{ciquattro}.

\subsection{Extremal trajectories}
Let us go to the generic extremal control $u$ in~\eqref{modellino}. In order to integrate it, introduce the following functions:
 \begin{equation}\label{vecchie}
  T(\phi)=\frac{\sin\phi}{\phi}, \quad U(\phi)=\frac{\phi-\sin\phi\cos\phi}{4\phi^2},\quad
  V(\phi)=\frac{\sin\phi-\phi\cos\phi}{2\phi^2},
 \end{equation}
 \begin{equation*}
  F(\phi_1,\phi_2)=\frac 18\bigg\{\Big(\frac{1}{\phi_1}-\frac{1}{\phi_2}\Big)\frac{\sin^2(\phi_1+\phi_2)}{\phi_1+\phi_2}
  +
\Big(\frac{1}{\phi_1}+\frac{1}{\phi_2}\Big)
\frac{\sin^2(\phi_1-\phi_2)}{\phi_1-\phi_2}    \bigg\},
 \end{equation*}
 \begin{equation*}
 \begin{aligned}
 G(\phi_1,\phi_2)=\frac 18\bigg\{ & \Big(\frac{1}{\phi_1}+\frac{1}{\phi_2}\Big)
  \frac{\sin(\phi_1-\phi_2)\cos(\phi_1-\phi_2)}{\phi_1-\phi_2}
 \\&+
  \Big(\frac{1}{\phi_2 }-\frac{1}{\phi_1}\Big)
  \frac{\sin(\phi_1+\phi_2)\cos(\phi_1+\phi_2)}{\phi_1+\phi_2}
  -\frac{2\sin\phi_1\cos\phi_1}{\phi_1\phi_2}
  \bigg\}
  \end{aligned}
\end{equation*}
and
\begin{equation*}
\begin{aligned}
 H(\phi_1,\phi_2)=\frac 18\bigg\{ & \Big(\frac{1}{\phi_1}+\frac{1}{\phi_2}\Big) \frac{\sin^2(\phi_1-\phi_2)}{\phi_1-\phi_2}
 -
 \Big(\frac{1}{\phi_1}-\frac{1}{\phi_2}\Big) \frac{\sin^2(\phi_1+\phi_2)}{\phi_1+\phi_2}
 \\& +\frac{2(\sin^2\phi_2-\sin^2\phi_1)}{\phi_1 \phi_2}
 \bigg\}.
\end{aligned}
\end{equation*}

%

\begin{proposition}[Extremal trajectories] \label{lll}
 Let $  a_1, a_2, b_1, b_2$ be pairwise orthogonal and assume that $|a_k|=|b_k|= r_k>0$ for $k=1,2$. Consider for $\la_1>\la_2>0$ the corresponding generic extremal control
 \begin{equation}\label{ulisse}
\begin{aligned}
 u(s)& =a_1\cos(\la_1 s)+b_1\sin(\la_1 s)+a_2\cos(\la_2 s)+b_2\sin(\la_2 s)
 \\&
= :a_1\cos(2\phi_1 s)+b_1\sin(2\phi_1 s)+a_2\cos(2\phi_2 s)+b_2\sin(2\phi_2 s).
\end{aligned}
\end{equation}
 Then, the corresponding trajectory $\gamma(\cdot, a,b,\phi)=(x(\cdot, a,b,\phi),t(\cdot, a,b,\phi))$ has the form
\begin{equation}\label{essenza}
 \begin{aligned}
x(s) &
 = sT(\phi_1 s)\Big(a_1\cos(\phi_1 s)+b_1\sin(\phi_1 s)\Big) +
 sT(\phi_2 s)\Big(a_2\cos(\phi_2 s)+b_2\sin(\phi_2 s)\Big)
\\
t(s)& =s^2 U(s\phi_1)a_1\wedge b_1 + s^2 F(s\phi_1, s\phi_2) a_1\wedge a_2
 +s^2 G(s\phi_1, s\phi_2)a_1\wedge b_2
 \\ &\qquad - s^2 G(s\phi_2, s\phi_1) b_1\wedge a_2
 +s^2H(s\phi_1, s\phi_2)b_1\wedge b_2+ s^2 U(s\phi_2) a_2\wedge b_2.
 \end{aligned}\end{equation}
\end{proposition}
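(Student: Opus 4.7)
The plan is to integrate directly the ODE system $\dot x = u$, $\dot t = \frac{1}{2} x\wedge u$ with zero initial data, using trigonometric identities throughout.

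For the formula of $x(s)$, I would integrate $\dot x=u$ termwise using $\int_0^s\cos(2\phi\sigma)\,d\sigma=\sin(2\phi s)/(2\phi)$ and $\int_0^s\sin(2\phi\sigma)\,d\sigma=(1-\cos(2\phi s))/(2\phi)$, then apply the double-angle identities $\sin(2\phi s)=2\sin(\phi s)\cos(\phi s)$ and $1-\cos(2\phi s)=2\sin^2(\phi s)$. For each $k=1,2$ this combines the $a_k$- and $b_k$-contributions into the factor $\sin(\phi_k s)/\phi_k=sT(\phi_k s)$ times $a_k\cos(\phi_k s)+b_k\sin(\phi_k s)$, which is exactly the claimed formula for $x(s)$.

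For $t(s)$, I would expand $x(\sigma)\wedge u(\sigma)$ as a linear combination of the six independent bivectors $a_1\wedge b_1,\ a_1\wedge a_2,\ a_1\wedge b_2,\ b_1\wedge a_2,\ b_1\wedge b_2,\ a_2\wedge b_2$ (all other wedges vanish by orthogonality of $a_k,b_k$ with themselves via $a_k\wedge a_k=0$, $b_k\wedge b_k=0$), and integrate each scalar coefficient from $0$ to $s$, dividing by $2$. For the diagonal $a_k\wedge b_k$ coefficient, the identities $\sin(\phi_k\sigma)\cos(\phi_k\sigma)=\tfrac12\sin(2\phi_k\sigma)$ and $\sin^2(\phi_k\sigma)=\tfrac12(1-\cos(2\phi_k\sigma))$ collapse it to $(1-\cos(2\phi_k\sigma))/(2\phi_k)$, which integrates to $(\phi_k s-\sin(\phi_k s)\cos(\phi_k s))/(2\phi_k^2)$; dividing by $2$ yields $s^2 U(s\phi_k)$, matching \eqref{vecchie}.

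The substantial step is the computation for the four mixed bivectors, producing the functions $F,G,H$. Each such coefficient is a product of a factor depending on $\phi_1\sigma$ and one depending on $\phi_2\sigma$, each being itself a product of at most three trigonometric factors. I would apply the Werner product-to-sum identities
\[
\cos\alpha\cos\beta=\tfrac12[\cos(\alpha-\beta)+\cos(\alpha+\beta)], \quad \sin\alpha\sin\beta=\tfrac12[\cos(\alpha-\beta)-\cos(\alpha+\beta)],
\]
\[
\sin\alpha\cos\beta=\tfrac12[\sin(\alpha-\beta)+\sin(\alpha+\beta)],
\]
iteratively to reduce each coefficient to a linear combination of $\cos((\phi_1\pm\phi_2)\sigma)$, $\sin((\phi_1\pm\phi_2)\sigma)$, $\cos(2\phi_k\sigma)$ and $\sin(2\phi_k\sigma)$; each such term integrates in closed form, producing the denominators $\phi_1\pm\phi_2$, $\phi_k$ and $\phi_1\phi_2$ exactly as they appear in the definitions of $F$, $G$ and $H$.

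The main obstacle is the bookkeeping in this last step: the coefficients of $a_1\wedge b_2$ and $b_1\wedge a_2$ are related by a sign flip and by the swap $\phi_1\leftrightarrow\phi_2$ (explaining the argument swap and the minus sign in front of $G(s\phi_2,s\phi_1)$ in \eqref{essenza}), but this relation has to be verified carefully rather than assumed. A systematic way to keep the calculation under control is to treat the four mixed bivectors as entries of a single $2\times 2$ matrix-valued coefficient, perform the reductions uniformly, and then invoke the symmetries $a_k\leftrightarrow b_k$ (which corresponds to phase shifts $\phi_k\sigma\mapsto\phi_k\sigma-\pi/2$ at the level of the control) and $(a_1,b_1,\phi_1)\leftrightarrow(a_2,b_2,\phi_2)$ to cross-check the resulting formulas. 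Finally, since this computation is essentially \cite[Theorem~6.1]{MonroyMeneses06} specialised to $n=4$, I would cite that reference for the detailed algebra after sketching the scheme above.
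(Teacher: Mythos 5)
Your proposal is correct and follows essentially the same route as the paper: termwise integration of $\dot x=u$ with double-angle identities for $x(s)$, then expansion of $\tfrac12\,x\wedge u$ over the six bivectors and product-to-sum reduction of each coefficient, with one representative mixed term computed in full (the paper does the $a_1\wedge b_2$ coefficient, yielding $G$) and the rest deferred to symmetry and to \cite{MonroyMeneses06}. No gap to report.
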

Concerning the extremal curves above, observe the reparametrization property
\begin{equation}\label{parametro}
 \gamma(s,a,b,\phi)= \gamma(1, sa, sb, s\phi) \quad \text{ for all $s>0$, $a,b,\phi$.}
\end{equation}
Proposition~\ref{lll} is proved in arbitrary dimension in~\cite{MonroyMeneses06}. For completeness, we give here a sketch of the proof.
\begin{proof}
Let us start from \begin{equation*}
 x(s)=\int_0^s u(\sigma) d\sigma= a_1\frac{\sin(\la_1 s)}{\la_1} + b_1 \frac{1-\cos(\la_1 s)}{\la_1}
 +  a_2\frac{\sin(\la_2 s)}{\la_2} + b_2 \frac{1-\cos(\la_2 s)}{\la_2}.
\end{equation*}
Elementary trigonometry gives then the form of $x(s)$ in~\eqref{essenza}.
The calculation of  $t(s)=\frac 12 \int_0^s x(\sigma)\wedge u(\s)d\s$  consists of several integrals. We calculate here the component $\pi_{ a_1\wedge b_2}t(s)$ along $\Span\{a_1\wedge b_2\}$. All other computations are similar.
\begin{equation}\label{imposto}
\begin{aligned}
 \pi_{  a_1\wedge b_2}t(s) &=\frac 12\int_0^s
 \Big\{
 \frac{\sin(\la_1\s)}{\lambda_1}\sin(\lambda_2\s) -\frac{1-\cos(\la_2\s)}{\la_2}\cos(\la_1\s)
 \Big\}d\s
\end{aligned}
\end{equation}
We have also
\begin{equation*}
\begin{aligned}
& \int_0^s\sin(\la_1\s)\sin(\la_2\s )d\s=\frac{\sin((\la_1-\la_2)s)}{2(\la_1-\la_2)}-\frac{\sin((\la_1+\la_2)s)}{2(\la_1+\la_2)},\quad\text{ and}
\\&
\int_0^s (1-\cos(\la_2\s))\cos(\la_1\s)d\s =\frac{\sin(\la_1s)}{\la_1}-
\frac{\sin((\la_1+\la_2) s)}{2(\la_1+\la_2)}
-
\frac{\sin((\la_1-\la_2) s)}{2(\la_1-\la_2)}.
\end{aligned}
\end{equation*}
Inserting into~\eqref{imposto}, we get
\begin{equation*}
 \begin{aligned}
 \pi_{  a_1\wedge b_2}t(s) &= \frac 14\Big(\frac{1}{\la_1}+\frac{1}{\la_2}\Big)
 \frac{\sin((\la_1-\la_2)s)}{\la_1-\la_2}
 +
 \frac 14\Big(\frac{1}{\la_2}-\frac{1}{\la_1}\Big)
 \frac{\sin((\la_1+\la_2)s)}{\la_1+\la_2}
 -\frac{\sin(\la_1 s)}{2\la_1\la_2}
 \\&
 =\frac 18 \Big\{\Big(\frac{1}{\phi_1}+\frac{1}{\phi_2}\Big)
 \frac{\sin((\phi_1-\phi_2)s) \cos ((\phi_1-\phi_2)s)}{\phi_1-\phi_2}
 \\&\quad +\Big(\frac{1}{\phi_2}-\frac{1}{\phi_1}\Big)
 \frac{\sin((\phi_1+\phi_2)s) \cos ((\phi_1+\phi_2)s)}{\phi_1+\phi_2}
 -\frac{2\sin(\phi_1s)\cos(\phi_1s)}{\phi_1\phi_2}
 \Big\}
 \\&=s^2G(s\phi_1,s\phi_2),
\end{aligned}
\end{equation*}
as desired.
\end{proof}


For future reference, write here  $\gamma(1,a,b,\phi)$, the extremal~\eqref{essenza} at time $s=1$.
\begin{equation}\label{estremali}
\begin{aligned}
x(1,a,b,\phi)  & =  T(\phi_1  )\Big(a_1\cos(\phi_1  )+b_1\sin(\phi_1  )\Big) +
  T(\phi_2  )\Big(a_2\cos(\phi_2  )+b_2\sin(\phi_2  )\Big)
\\
t(1,a,b,\phi) &=
 U( \phi_1)a_1\wedge b_1 +  F( \phi_1,  \phi_2) a_1\wedge a_2
 +  G( \phi_1,  \phi_2)a_1\wedge b_2
 \\ &\qquad -  G( \phi_2,  \phi_1) b_1\wedge a_2
 + H( \phi_1,  \phi_2)b_1\wedge b_2+   U( \phi_2) a_2\wedge b_2.
\end{aligned}
\end{equation}
\begin{remark}
  It can be checked that $\lim_{\phi_2\to 0} F(\phi_1, \phi_2)= \sin\phi_1 V(\phi_1)$, $\lim_{\phi_2\to 0}(-G(\phi_2, \phi_1))= -\cos\phi_1 V(\phi_1)$. Moreover,   $\lim_{\phi_2\to 0}G(\phi_1,\phi_2)=0$ and $\lim_{\phi_2\to 0}H(\phi_1, \phi_2)=0$. This means that, as $\phi_2\to 0$, then formulae~\eqref{ulisse} and~\eqref{essenza} degenerate to the known formulas in~$\F_3$, see~\cite{MontanariMorbidelli17}.
    If instead $(\phi_1, \phi_2)\to (\phi,\phi)$, where $ \phi>0$, we obtain as expected
\begin{equation*}
\begin{aligned}
(x(1),t(1))    = \big(  T(\phi   )\big( (a_1+a_2)\cos(\phi  )+(b_1+b_2)\sin(\phi   )\big)  ,
 U( \phi ) (a_1+ a_2)\wedge (b_1+b_2)\Big).
\end{aligned}
\end{equation*}
In this case, the corresponding  curve $(x(s), t(s))$ is contained in the Carnot subgroup generated by $ a_1+a_2$ and $ b_1+b_2$. See the discussion
in Subsection~\ref{pino}.
 \end{remark}

\subsection{Change of basis}
Start from \eqref{estremali} and perform the change of basis
\begin{equation}\label{svarione}
\left \{\begin{aligned}
&\a_k= a_k\sin\phi_k -b_k\cos\phi_k
\\&
\b_k= a_k\cos\phi_k +b_k\sin \phi_k
\end{aligned}\right.
 \qquad \text{i.e. }\quad  \left\{ \begin{aligned}
 & a_k=\a_k\sin\phi_k+\b_k\cos\phi_k
 \\
 & b_k=\b_k\sin\phi_k-\a_k\cos\phi_k
\end{aligned}\right.
 \end{equation}
for $k=1,2$. We are going to show that extremals become  easier  in this basis.
\begin{lemma}
Let $s_k=\sin\phi_k $ and $c_k=\cos\phi_k$ for $k=1,2$.  We have the following formulae
\begin{subequations}
\begin{align} &   \label{peppino} 2\phi_1\phi_2(\phi_1^2-\phi_2^2)F(\phi_1,\phi_2)=-(\phi_1^2+\phi_2^2)s_1c_1s_2c_2+\phi_1\phi_2(s_1^2c_2^2+c_1^2 s_2^2)
\\ &2\phi_1\phi_2(\phi_1^2-\phi_2^2)G(\phi_1,\phi_2)= -\phi_1^2s_1c_1s_2^2 +\phi_2^2s_1c_1 c_2^2
   -\phi_1\phi_2(c_1^2-s_1^2)s_2c_2
\\ &2\phi_1\phi_2(\phi_1^2-\phi_2^2)G(\phi_2,\phi_1)= -\phi_1^2c_1^2s_2c_2+\phi_2^2s_1^2s_2c_2
   +\phi_1\phi_2 s_1c_1(c_2^2-s_2^2)
\\ &2\phi_1\phi_2(\phi_1^2-\phi_2^2)H(\phi_1,\phi_2)=\phi_1^2c_1^2s_2^2+\phi_2^2s_1^2 c_2^2- 2\phi_1\phi_2s_1c_1s_2c_2.
\end{align}
\end{subequations}
\end{lemma}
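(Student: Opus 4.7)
My plan is to verify each of the four identities by direct trigonometric expansion; they are pure identities in $\phi_1,\phi_2$ and no structural input is needed. The first step is to clear denominators in the defining formulas of $F,G,H$ by multiplying by $8\phi_1\phi_2(\phi_1^2-\phi_2^2)=8\phi_1\phi_2(\phi_1-\phi_2)(\phi_1+\phi_2)$. The elementary observation that $\tfrac{1}{\phi_1}\pm\tfrac{1}{\phi_2}=\tfrac{\phi_2\pm\phi_1}{\phi_1\phi_2}$ means that the combination $\bigl(\tfrac{1}{\phi_1}\pm\tfrac{1}{\phi_2}\bigr)\cdot\tfrac{1}{\phi_1\pm\phi_2}$ multiplied by $\phi_1\phi_2(\phi_1^2-\phi_2^2)$ leaves a clean $\pm(\phi_1\mp\phi_2)^2$, so that after this reduction I am left with polynomial combinations of $(\phi_1\pm\phi_2)^2$ times trigonometric functions of $\phi_1\pm\phi_2$, plus single-angle remainders arising from the $-2s_1c_1/(\phi_1\phi_2)$ piece in $G$ and the $2(s_2^2-s_1^2)/(\phi_1\phi_2)$ piece in $H$.

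Next I would apply the addition formulae $\sin(\phi_1\pm\phi_2)=s_1c_2\pm c_1s_2$ and $\cos(\phi_1\pm\phi_2)=c_1c_2\mp s_1s_2$. For the two identities in the $F$ and $H$ lines this gives $\sin^2(\phi_1\pm\phi_2)=s_1^2c_2^2+c_1^2s_2^2\pm 2s_1c_1s_2c_2$; the mixed terms then combine via the pair $(\phi_1+\phi_2)^2+(\phi_1-\phi_2)^2=2(\phi_1^2+\phi_2^2)$ and $(\phi_1+\phi_2)^2-(\phi_1-\phi_2)^2=4\phi_1\phi_2$ to produce exactly the right-hand sides. For the $H$ identity the leftover $s_2^2-s_1^2$ should first be rewritten as $c_1^2s_2^2-s_1^2c_2^2$ by $s_k^2+c_k^2=1$, so that it groups cleanly with the terms coming from the $\sin^2(\phi_1\pm\phi_2)$ expansions.

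For the $G(\phi_1,\phi_2)$ identity I would use $\sin(\phi_1\pm\phi_2)\cos(\phi_1\pm\phi_2)=s_1c_1(c_2^2-s_2^2)\pm s_2c_2(c_1^2-s_1^2)$. The symmetric and antisymmetric combinations of $(\phi_1\pm\phi_2)^2$ again collapse and yield $2(\phi_1^2+\phi_2^2)s_1c_1(c_2^2-s_2^2)-4\phi_1\phi_2\,s_2c_2(c_1^2-s_1^2)$. The extra summand $-2(\phi_1^2-\phi_2^2)s_1c_1$ coming from the $G$-specific remainder is then absorbed by writing $s_1c_1=s_1c_1(s_2^2+c_2^2)$ and recombining; the $\phi_1^2$ and $\phi_2^2$ contributions line up to give the stated coefficients $-\phi_1^2 s_1c_1s_2^2$ and $\phi_2^2 s_1c_1c_2^2$. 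The identity for $G(\phi_2,\phi_1)$ is then immediate from the previous one by swapping the indices $1\leftrightarrow 2$, noting that both the overall factor $\phi_1^2-\phi_2^2$ and the right-hand side change sign under this swap.

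The hardest part is not conceptual but organisational: the signs attached to the $\sin^2$ and $\sin\cos$ contributions in the defining formulas of $F$, $G$ and $H$ are arranged with subtly different patterns, so care is needed to ensure that the $(\phi_1\mp\phi_2)^2$ factors end up with the correct sign on each line and that the extra single-angle remainders carry the appropriate $(\phi_1^2-\phi_2^2)$-prefactor before being merged with the double-angle contributions. Beyond this bookkeeping the proof is purely mechanical trigonometric algebra.
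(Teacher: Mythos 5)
Your proposal is correct and follows essentially the same route as the paper: clear the denominator $8\phi_1\phi_2(\phi_1^2-\phi_2^2)$, use $\tfrac{1}{\phi_1}\pm\tfrac{1}{\phi_2}=\tfrac{\phi_2\pm\phi_1}{\phi_1\phi_2}$ to reduce the prefactors to $\pm(\phi_1\mp\phi_2)^2$, expand the double-angle trigonometric functions, and recombine via $(\phi_1+\phi_2)^2\pm(\phi_1-\phi_2)^2$. The paper writes out only the $F$ identity and omits the rest as analogous, whereas you also correctly handle the single-angle remainders in $G$ and $H$ and observe that the third identity follows from the second by the swap $1\leftrightarrow 2$; all of these details check out.
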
\label{noioso}
\begin{proof}
Let us show~\eqref{peppino} multiplied by $4$.
\begin{equation*}
\begin{aligned}
 8 &\phi_1\phi_2(\phi_1^2-\phi_2^2)F(\phi_1,\phi_2)
\\&
=\phi_1\phi_2(\phi_1^2-\phi_2^2)
 \Big\{\Big(\frac{1}{\phi_1}-\frac{1}{\phi_2}\Big)\frac{\sin^2(\phi_1+\phi_2)}{\phi_1+\phi_2}
  +
\Big(\frac{1}{\phi_1}+\frac{1}{\phi_2}\Big)
\frac{\sin^2(\phi_1-\phi_2)}{\phi_1-\phi_2}    \Big\}
\\&
=\Big\{-(\phi_1-\phi_2)^2 [s_1^2 c_2^2+ s_2^2 c_1^2+2s_1c_1s_2c_2]
+(\phi_1+\phi_2)^2[s_1^2 c_2^2+ s_2^2 c_1^2-2s_1c_1s_2c_2]\Big\}
\\&= (-\phi_1^2-\phi_2^2+2\phi_1\phi_2) [s_1^2 c_2^2+ s_2^2 c_1^2+2s_1c_1s_2c_2]
\\&\quad
+(\phi_1^2+\phi_2^2+2\phi_1\phi_2)[s_1^2 c_2^2+ s_2^2 c_1^2-2s_1c_1s_2c_2]
\\&=-4(\phi_1^2+\phi_2^2)s_1c_1s_2c_2
+4\phi_1\phi_2( s_1^2 c_2^2+ s_2^2 c_1^2).
\end{aligned}
\end{equation*}
The remaining formulas can be proved in an analogous way and we omit them.
\end{proof}

Now we are ready to prove Theorem~\ref{sempres}.
\begin{proof}[Proof of Theorem~\ref{sempres}]
 To prove the statement, start from~\eqref{estremali}  and use the change of basis \eqref{svarione}. Then $t=t(1,a,b,\phi)$ becomes
\begin{equation*}
\begin{aligned}
 t&= U(\phi_1)(\a_1 s_1+\b_1 c_1)\wedge(-\a_1c_1+\b_1 s_1)+ F(\phi_1,\phi_2)
 (\a_1 s_1+\b_1 c_1)\wedge (\a_2 s_2+\b_2 c_2)
 \\&\quad
 + G(\phi_1,\phi_2) (\a_1 s_1+\b_1 c_1)\wedge (-\a_2c_2+\b_2s_2)
 -G(\phi_2,\phi_1)(-\a_1c_1 +\b_1 s_1)\wedge (\a_2 s_2 + \b_2 c_2)
 \\& \quad
+H(\phi_1,\phi_2) (-\a_1c_1 +\b_1 s_1)\wedge (-\a_2 c_2 +\b_2 s_2)+ U(\phi_2)
(\a_2 s_2+\b_2 c_2)\wedge(-\a_2 c_2+\b_2 s_2).
\end{aligned}
\end{equation*}
The first and the last terms can be trivially written in the required form, $U_1\a_1\wedge\b_1+ U_2\a_2\wedge\b_2$,  because $(\a_k s_k+\b_k c_k)\wedge(-\a_k c_k +\b_k s_k)=\a_k\wedge\b_k$. The intermediate four terms will give contributions along $\a_1\wedge \a_2$, $\a_1\wedge\b_2$, $\b_1\wedge \a_2$ and  $\b_1\wedge \b_2$.
Let us calculate the scalar component  $\pi_{\a_1\wedge \a_2} t$ of $t$ along $\a_1\wedge\a_2$, keeping Lemma~\ref{noioso} into account.
\begin{equation*}
\begin{aligned}
 \pi_{\a_1\wedge \a_2} t &= F(\phi_1,\phi_2)s_1 s_2 -G(\phi_1,\phi_2)s_1 c_2
 +G(\phi_2,\phi_1)c_1s_2+H(\phi_1,\phi_2)c_1 c_2
 \\&=\frac{1}{2\phi_1\phi_2(\phi_1^2-\phi_2^2)}
 \Big\{  \big[-(\phi_1^2+\phi_2^2)s_1c_1s_2 c_2+\phi_1\phi_2 (s_1^2 c_2^2 + c_1^2 s_2^2)\big] s_1 s_2
 \\&\qquad \qquad \qquad  \qquad-\big [-\phi_1^2 s_1 c_1 s_2^2+\phi_2^2 s_1 c_1c_2^2
 -\phi_1\phi_2 (c_1^2 - s_1^2) s_2 c_2\big]s_1 c_2
 \\&\qquad \qquad \qquad  \qquad
 +\big[-\phi_1^2 c_1^2 s_2 c_2 + \phi_2^2 s_1^2 s_2 c_2 +\phi_1\phi_2 s_1 c_1(c_2^2 -s_2^2)
 \big] c_1 s_2
 \\&\qquad \qquad \qquad  \qquad
 +\big[ \phi_1^2 c_1^2 s_2^2 +\phi_2^2 s_1^2 c_2^2 -2\phi_1\phi_2 s_1 c_1 s_2 c_2 \big] c_1 c_2 \Big\}
 =0.
\end{aligned}
\end{equation*}
To check the last equality, it suffices to  write  $\{\cdots\}= \phi_1^2 a+\phi_2^2 b+\phi_1\phi_2 c$ and check that $a=b=c=0$ identically in $\phi_1,\phi_2$.

Let us calculate the scalar component $\pi_{\a_1\wedge\b_2}t$ of $t$ along $\a_1\wedge\b_2$. We argue as above.
\begin{equation*}
\begin{aligned}
 \pi_{\a_1\wedge \b_2} t &= F(\phi_1,\phi_2)s_1 c_2 +G(\phi_1,\phi_2)s_1 s_2
 +G(\phi_2,\phi_1)c_1c_2-H(\phi_1,\phi_2)c_1 s_2
 \\&=\frac{1}{2\phi_1\phi_2(\phi_1^2-\phi_2^2)}
  \Big\{
  \big[-(\phi_1^2+\phi_2^2)s_1c_1s_2 c_2+\phi_1\phi_2 (s_1^2 c_2^2 + c_1^2 s_2^2)\big] s_1 c_2
 \\&\qquad \qquad \qquad  \qquad
 +\big [-\phi_1^2 s_1 c_1 s_2^2+\phi_2^2 s_1 c_1c_2^2
 -\phi_1\phi_2 (c_1^2 - s_1^2) s_2 c_2\big]s_1 s_2
 \\&\qquad \qquad \qquad  \qquad
 +\big[-\phi_1^2 c_1^2 s_2 c_2 + \phi_2^2 s_1^2 s_2 c_2 +\phi_1\phi_2 s_1 c_1(c_2^2 -s_2^2)
 \big] c_1 c_2
 \\&\qquad \qquad \qquad  \qquad
 -\big[ \phi_1^2 c_1^2 s_2^2 +\phi_2^2 s_1^2 c_2^2 -2\phi_1\phi_2 s_1 c_1 s_2 c_2 \big] c_1 s_2 \Big\}.
\end{aligned}
\end{equation*}
Taking into account all  cancellations  in $\{\cdots\}$, it turns out that the terms in $\phi_2^2$ cancel and more precisely $\{\cdots\}= - \phi_1^2c_1 s_2
+\phi_1\phi_2 s_1 c_2$.
Therefore
\begin{equation*}
\begin{aligned}
 \pi_{\a_1\wedge \b_2} t &=  \frac{1}{2\phi_1\phi_2(\phi_1^2-\phi_2^2)}
   ( - \phi_1^2c_1 s_2
+\phi_1\phi_2 s_1 c_2 ) =Z(\phi_1,\phi_2),
\end{aligned}
\end{equation*}
as required.
Note that,  excanghing $2$ with $1$, we get trivially $ \pi_{\a_2\wedge\b_1}=Z(\phi_2,\phi_1)$.

We leave to the reader to check that $\pi_{\beta_1\wedge\b_2}t=0$.
\end{proof}

\begin{remark} Observe the following degenerations of the function~$Z$. For all $\phi_1 > 0$  we have
 \begin{equation}\label{azzero}
\begin{aligned}
&\lim_{\phi_2\to 0}Z(\phi_1,\phi_2)=   \frac{\sin\phi_1-\phi_1\cos\phi_1}{2\phi_1^2}=V(\phi_1)
\quad\text{ and } \quad \lim_{\phi_1\to 0}Z(\phi_1,\phi_2)= 0.
\end{aligned}
\end{equation}
Then with $\phi_2=0$ we find known formulas from~\cite{MontanariMorbidelli17}.
We also have the limit
$ Z(\phi_1,\phi_2)\mapsto  U(\phi)$,   as $ (\phi_1, \phi_2)\to (\phi,\phi)  $, for all $\phi>0$.
Recall that the function $V$ appears in~\eqref{vecchie}.
\end{remark}

Next we express any extremals at any time $s$ using the functions $U$ and $ Z$ in~\eqref{super}. Define
 \begin{equation}\label{tagliola}
 \begin{aligned}
 \a_k^s:=  s [a_k\sin(\phi_k s) -b_k\cos(\phi_k s)]  \quad\text{and}\quad \b_k^s:=   s
   [a_k\cos(\phi_k s)+ b_k\sin(\phi_k s)]
 \end{aligned}
 \end{equation}
for $k=1,2$. Then we have the following corollary.
\begin{corollary}\label{penna}
 Let $\gamma(\cdot, a,b,\phi)=(x(\cdot, a,b,\phi),t(\cdot, a,b,\phi))$ be an extremal as in Proposition~\ref{lll}. Define $\a_k^s$ and $\b_k^s$ by~\eqref{tagliola}. Then we have
\begin{equation}\label{calamo}
 \begin{aligned}
 &x(s,a,b,\phi)
 =  T(\phi_1 s)\b_1^s
 + T(\phi_2 s)\b_2^s
 \\
& t(s,a,b,\phi)  =
\\& =  U(\phi_1 s) \a_1^s \wedge \b_1^s
 + Z(\phi_1 s,\phi_2 s)\a_1^s\wedge \b_2^s
  + Z(\phi_2 s,\phi_1 s) \a_2^s\wedge \b_1^s
+  U(\phi_2 s)\a_2^s\wedge \b_2^s.
 \end{aligned}
\end{equation}
\end{corollary}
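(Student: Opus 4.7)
The proof is a direct combination of Theorem~\ref{sempres} with the reparametrization property $\gamma(s,a,b,\phi)=\gamma(1,sa,sb,s\phi)$ stated in \eqref{parametro}, so there is no genuine obstacle — it is a bookkeeping verification. I outline the three steps.

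First I would set up the substitution. Apply Theorem~\ref{sempres} with parameters $(a',b',\phi'):=(sa,sb,s\phi)$, which are still admissible (pairwise orthogonality and equality of norms within each pair are preserved, and $s\phi_1>s\phi_2>0$ for $s>0$). The rotation \eqref{alfacappa} then produces
\begin{equation*}
\begin{aligned}
\alpha'_k&=a'_k\sin\phi'_k-b'_k\cos\phi'_k=s\bigl[a_k\sin(\phi_k s)-b_k\cos(\phi_k s)\bigr]=a_k^s,\\
\beta'_k&=a'_k\cos\phi'_k+b'_k\sin\phi'_k=s\bigl[a_k\cos(\phi_k s)+b_k\sin(\phi_k s)\bigr]=b_k^s,
\end{aligned}
\end{equation*}
for $k=1,2$, which are exactly the vectors defined in \eqref{tagliola}. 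In other words, the change of basis of Theorem~\ref{sempres} applied to the rescaled covector $(sa,sb,s\phi)$ yields precisely the $s$-dependent basis $(a_k^s,b_k^s)$.

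Second, I would plug these into the formulas \eqref{super} of Theorem~\ref{sempres} with angles $(\phi'_1,\phi'_2)=(s\phi_1,s\phi_2)$, obtaining
\begin{equation*}
x(1,sa,sb,s\phi)=T(s\phi_1)b_1^s+T(s\phi_2)b_2^s
\end{equation*}
and
\begin{equation*}
\begin{aligned}
t(1,sa,sb,s\phi)&=U(s\phi_1)\,a_1^s\wedge b_1^s+Z(s\phi_1,s\phi_2)\,a_1^s\wedge b_2^s\\
&\quad+Z(s\phi_2,s\phi_1)\,a_2^s\wedge b_1^s+U(s\phi_2)\,a_2^s\wedge b_2^s.
\end{aligned}
\end{equation*}
Finally, invoking \eqref{parametro}, namely $\gamma(s,a,b,\phi)=\gamma(1,sa,sb,s\phi)$, the two displays above coincide with the right-hand sides of \eqref{calamo}, which completes the proof. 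No further computation is needed, since the nontrivial algebraic identities involving $F,G,H$ have already been absorbed into Theorem~\ref{sempres}.
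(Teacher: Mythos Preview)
Your proof is correct and takes essentially the same approach as the paper: apply Theorem~\ref{sempres} together with the reparametrization property~\eqref{parametro}, recognizing that the rotated vectors $\alpha'_k,\beta'_k$ for the rescaled data $(sa,sb,s\phi)$ are exactly the $a_k^s,b_k^s$ of~\eqref{tagliola}. The paper's version differs only cosmetically, first expanding $\gamma(1,a,b,\phi)$ via~\eqref{alfacappa} and then substituting $(a,b,\phi)\mapsto(sa,sb,s\phi)$, which amounts to the same computation in reverse order.
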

Note that if $s=1$ and $k=1,2$, then $\a_k^1=\a_k $ and $\b_k^1= \b_k$ and we recover~\eqref{super}.

 We omit the easy proof which is based on~\eqref{super},~\eqref{alfacappa} and on the reparametrization property~~\eqref{parametro}.

\begin{remark}\label{conforme}
Let us consider the form~\eqref{blocco} of $t(1,a,b,\phi)=
\Big[\begin{smallmatrix}
 0&M \\-M^T &0
\end{smallmatrix}
\Big]\in\so(4)$,
  where
$M=\begin{bmatrix}r_1^2 U_1 & r_1 r_2 Z_{12}\\ r_1 r_2 Z_{21} & r_2^2 U_2\end{bmatrix}
$.
It would be useful to understand eigenvalues and eigenspaces of $t$ in order to write it in a canonical form.  However, the eigenvalue equation takes the form
  $\la^4 + \operatorname{tr}(M^TM) \la^2 + (\det M)^2=0$,
  which becomes considerably complicated in terms of the variables $r$ and $ \phi$ contained in $M$.
There is however a subcase which seems more manageable, namely the case when $t(1)$ has double eigenvalues. Note incidentally that such points are always cut points, see the set $\Sigma_2$ in~\eqref{ciquattro}.  In view of the standard inequality
  $(\det M)^2\leq \frac 14 \big(\operatorname{tr}(M^TM) \big)^2$ for all $M\in \R^{2\times 2}$
  with equality if and only if $M $ is conformal,
 it turns out that
$t(1) $ has two double eigenvalues $i\la$ and $-i\la$ if and only if the block is conformal.  Since $U_1>0, U_2>0$ for all $\phi_1,\phi_2>0$, this gives  $r_1^2 U_1  =r_2^2 U_2$
 and $Z_{12} = - Z_{21}$.
\end{remark}

\subsection{Extremals in Carnot subgroups}\label{pino}
Next we discuss points $(x,t)$ belonging to some strict Carnot subgroup of $\F_4$. It turns out that for such points we can rely on previous known theory of length, distances and cut locus in lower rank free groups $\F_2$ and $\F_3$.
%

Let $V\subset\R^n$ be a linear subspace. Define  the Carnot subgroup generated by $V$ as $\Lie(V) := V\times \Lambda^2 V$. Note that $\Lie(V)$ is a Carnot subgroup of $\F_n$ of step $\leq 2$. Here we work with arbitrary~$n\in\N$.

\begin{proposition}\label{trentadue}
 Let $V\subsetneq \R^n$ and consider the  strict Carnot subgroup $\Lie(V)$ of~$\F_n$. Let $(x,t)\in \Lie (V)$. Let $u\in L^2(\R, \R^n)$ be  a length-minimizing control on $[0,T]$ such that $\gamma_u(0)=(0,0)$ and $\gamma_u(T)= (x,t)$.
 Then, we have $u(\R)\subset V$, or equivalently $\gamma_u(\R)\subset \Lie(V)$.
 As a consequence, we have
 \begin{equation}\label{ventuno}
  d_{\F_n}((0,0), (x,t))= d_{\Lie(V)}((0,0),(x,t))\quad\text{ for all $(x,t)\in \Lie (V)$.}
 \end{equation}
\end{proposition}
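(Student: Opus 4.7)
The plan is to project the control onto $V$ and to use minimality to force $u$ itself to take values in $V$. Let $\pi:\R^n\to V$ be the orthogonal projection and decompose $u=\tilde u+\hat u$ with $\tilde u=\pi u\in V$ and $\hat u\in V^\perp$; set $\tilde x(s)=\int_0^s \tilde u\,d\sigma$ and $\hat x(s)=\int_0^s \hat u\,d\sigma$, so that $x=\tilde x+\hat x$. The structural ingredient I would exploit is the orthogonal decomposition
\[
  \Lambda^2 \R^n \;=\; \Lambda^2 V \;\oplus\; (V\wedge V^\perp) \;\oplus\; \Lambda^2 V^\perp ,
\]
which splits $x\wedge u$ into four pieces lying in well-separated summands.

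The first step is to check that the projected trajectory $\gamma_{\tilde u}$ still reaches $(x,t)$ at time $T$. Since $(x,t)\in \Lie(V)=V\times\Lambda^2 V$, one has $x\in V$ (so in particular $\hat x(T)=0$) and $t\in \Lambda^2 V$. Expanding
\[
  x\wedge u \;=\; \tilde x\wedge\tilde u \;+\; \tilde x\wedge\hat u \;+\; \hat x\wedge\tilde u \;+\; \hat x\wedge\hat u ,
\]
the four summands lie respectively in $\Lambda^2 V$, $V\wedge V^\perp$, $V\wedge V^\perp$, and $\Lambda^2 V^\perp$. The hypothesis $t(T)\in \Lambda^2 V$ therefore forces the non-$\Lambda^2 V$ components of $\int_0^T x\wedge u\,ds$ to vanish, and hence $t=\tfrac12\int_0^T\tilde x\wedge\tilde u\,ds=\tilde t(T)$; combined with $\tilde x(T)=\pi(x)=x$, this gives $\gamma_{\tilde u}(T)=(x,t)$.

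With this in place, the pointwise identity $|u|^2=|\tilde u|^2+|\hat u|^2$ yields $\length(\gamma_{\tilde u}|_{[0,T]})\leq \length(\gamma_u|_{[0,T]})$, with equality if and only if $\hat u\equiv 0$ almost everywhere on $[0,T]$. Minimality of $u$ forces this equality, so $u(s)\in V$ for a.e.~$s\in[0,T]$. Since any length-minimizing control in $\F_n$ is an extremal of the analytic form~\eqref{modellino} (abnormal minimizers~\eqref{abno} are the special case $r_2=0$), the inclusion $u(s)\in V$ extends from $[0,T]$ to all of $\R$ by analytic continuation. The distance equality~\eqref{ventuno} then follows at once: the embedding $\Lie(V)\hookrightarrow \F_n$ gives $d_{\F_n}\leq d_{\Lie(V)}$ for free, and conversely the $\F_n$-minimizer we just analyzed already lies in $\Lie(V)$, so its length bounds $d_{\Lie(V)}$ from above.

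I expect the main obstacle to be precisely the endpoint computation above. What makes it go through is that $t\in \Lambda^2 V$ kills the mixed integral $\int_0^T (\tilde x\wedge\hat u + \hat x\wedge\tilde u)\,ds$ (the $V\wedge V^\perp$ piece) and the integral $\int_0^T \hat x\wedge\hat u\,ds$ (the $\Lambda^2 V^\perp$ piece) \emph{separately}, by projecting onto distinct direct summands; no subtle cancellation between them is needed, and once the projected curve is shown to be a competitor, the rest is an elementary norm inequality together with analyticity.
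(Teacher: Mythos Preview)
Your proof is correct and follows essentially the same route as the paper: project the control onto $V$, use the orthogonal splitting $\Lambda^2\R^n=\Lambda^2 V\oplus(V\wedge V^\perp)\oplus\Lambda^2 V^\perp$ to show the projected trajectory still reaches $(x,t)$, apply the Pythagorean inequality on the control norms, and conclude by analyticity of extremal controls that $u(\R)\subset V$. The only cosmetic difference is that the paper invokes the exponential form $u(s)=e^{-s\t}\xi$ for the analyticity step, whereas you cite the trigonometric expression~\eqref{modellino}; both amount to the same observation.
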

Note that the inequality $\leq $ in~\eqref{ventuno} is obvious. Equality  depends on the fact that $\F_n$ is free   (see the proof below).   As a consequence, in order to study  the cut-time of an extremal  $\gamma$ in $\Lie(V)$,  with $V\subsetneq\R^4$, it suffices to use the already known results on $\F_3$,
 and $\Cut(\Lie(V))= \Cut(\F_4)\cap \Lie(V)$.
After the proof we provide two counterexamples where equality~\eqref{ventuno} fails in nonfree settings.

\begin{proof}
 Let $(x,t)\in \Lie(V)$ be a point and let  $u(s)= :u_V^{}(s)+ u_V^\perp(s)\in V\oplus V^\perp $ be such that $\gamma_u(T)=(x,t)$. We have trivially $\length(\gamma_{u_V^{}})\leq \length \gamma_u$ with equality if and only if  $u_V^\perp(s)=0$ a.e. To conclude the proof,  we check that the control $u_V$ satisfies $\gamma_{u_V^{}}(T) = (x,t)$. Start from
 $
  x=\int_0^T u_V^{} + u_V^\perp   = : x_V^{}(T) + x_V^{\perp}(T)
 $. This implies that $\int_0^T u_V^{}=x$, as required. Moreover $\int_0^T u_V^\perp=0$. Then we look at the coordinate $t$.
 \begin{equation*}
\begin{aligned}
  t & = \frac 12\int_0^T (x_V^{}+ x_V^\perp)\wedge (u_V^{}+ u_V^\perp)
  \\& =\frac 12
  \int_0^T x_V^{} \wedge  u_V^{} + \frac 12\int_0^T (x_V^{}  \wedge     u_V^\perp + x_V^\perp\wedge u_V^{})
  +\frac 12
  \int_0^T x_V^{\perp} \wedge  u_V^\perp
  \\&
  \in\Lambda^2 V\oplus (V\wedge V^\perp)\oplus \Lambda^2 V^\perp
\end{aligned}
 \end{equation*}
where $V\wedge V^\perp=\Span\{v\wedge v^\perp: v\in V,\quad v^\perp\in V^\perp\}$ and the three subspaces $\Lambda^2 V$, $V\wedge V^\perp$ and $\Lambda^2 V^\perp$  are mutually orthogonal (this part of the argument does not generalize to nonfree settings). Since $t\in\Lambda^2 V$, we get that $\frac 12 \int x_V^{}\wedge u_V^{} = t$. Thus $\gamma_{u_V^{}}(T) = (x,t)$ and this proves the inequality $
  d_{\F_n}((0,0), (x,t))\geq  d_{\Lie(V)}((0,0),(x,t))$.
  Since $u(s)= e^{-s\t}\xi$ for suitable $\xi\in\R^n$ and $\t\in\so(n)$, it turns out by analyticity that $u(\R)\subset V$.
\end{proof}
If $\mathbb{H} $ is a Carnot subgroup of a possibly nonfree   step-2   Carnot group~$\G $, it may happen that
  $d_\G \lneqq  d_{\H}   $ at some points, where $d_\G$ and $d_\H$ denote distances from the origin.
\begin{example}   Consider the rank-4   group $\G=\R^4\times\R$ with law
\begin{equation*}
 (x,t)\cdot(\xi,\t) = \Big( x+\xi, t+\t+ \frac 12 (x\wedge \xi)_{12} + \frac{\a}{2}(x\wedge \xi)_{34}\Big)
 \in\R^4\times\R.
\end{equation*}
Take the point $(0,t)\in \Lie(e_1, e_2)$ with $t\neq 0$. It turns out that if $\a>1$, then all minimizers $\gamma$ connecting $(0,0)$ and $(0,t)\in\Lie(e_1, e_2)$
are contained in $\Lie(e_3, e_4)$   and we have $d_{\G}(0,t)=d_{\Lie(e_3, e_4)}(0,t)\lneqq
d_{\Lie(e_1, e_2)}(0,t)$.  This model has been studied in~\cite{MBarilariBoscainNeel16}.
\end{example}


  Going back to our model $\F_4$, in the following    elementary proposition, we check that a  length-minimizing curve  from the origin to
a point contained in a strict Carnot subgroup has the form $u(s)= a\cos(2\phi s)+b\sin(2\phi s)+z$, where $a,b,z\in\R^4$ are pairwise orthogonal,  $|a|=|b|>0$ and $\phi\geq 0$.

\begin{proposition}\label{giochino} Let $V\subset\R^4$ be a subspace with $\dim (V)\leq 3$.
Let $(x,t)\in  \Lie(V)$
and assume that $(x,t)=\gamma(1,a_1, b_1, a_2, b_2, \phi_1, \phi_2)$ where $\gamma$ is length-minimizing on $[0,1]$. As usual denote $|a_k|=|b_k|= r_k$ and assume also that $\phi_1\geq \phi_2\geq 0$.
 Then it must be either  $\phi_1=\phi_2\geq 0$ or,  $\phi_1>\phi_2$  and $\phi_2r_1 r_2 =0$. In other words, $\gamma$ must be non generic in the sense of Definition~\ref{generico}.
\end{proposition}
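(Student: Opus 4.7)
The plan is to reduce the statement to the freeness property of $\F_4$ encoded in Proposition~\ref{trentadue}, and then to exploit the spectral structure of the control $u$ in~\eqref{ulisse}. First, I would invoke Proposition~\ref{trentadue}: since $(x,t)\in\Lie(V)$ and $\gamma$ is length-minimizing, the associated control $u(s)=e^{-s\t}\xi$ is forced to take values in $V$, so $\Span\{u(s):s\in\R\}\subset V$, a subspace of dimension at most $3$.

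The second step is to compute this span explicitly from the expression
\[u(s)=a_1\cos(2\phi_1 s)+b_1\sin(2\phi_1 s)+a_2\cos(2\phi_2 s)+b_2\sin(2\phi_2 s).\]
I would argue by contradiction assuming the extremal is generic in the sense of Definition~\ref{generico}, i.e.\ $r_1,r_2>0$ and $\phi_1>\phi_2>0$. Under this assumption the four scalar functions $\cos(2\phi_1 s)$, $\sin(2\phi_1 s)$, $\cos(2\phi_2 s)$, $\sin(2\phi_2 s)$ are linearly independent on $\R$ (by comparing frequencies), so the linear span of $u(\R)$ coincides with $\Span\{a_1,b_1,a_2,b_2\}$. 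Since the hypothesis on the extremal form gives that $a_1,b_1,a_2,b_2$ are pairwise orthogonal with $|a_k|=|b_k|=r_k>0$, these four vectors are linearly independent and span all of $\R^4$. This contradicts $\Span u(\R)\subset V$ with $\dim V\le 3$, so the assumption of genericity is untenable; hence either $\phi_1=\phi_2$, or $\phi_2=0$, or $r_1 r_2=0$, which is exactly the dichotomy in the statement.

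The only genuinely nontrivial input is Proposition~\ref{trentadue}; no serious obstacle is expected, because once we know $u(\R)\subset V$ the linear independence of the four distinct pure harmonics is elementary. A minor point worth double-checking is the degenerate case $\phi_1=\phi_2>0$: here the two pairs of harmonics coincide, so $u(s)=(a_1+a_2)\cos(2\phi s)+(b_1+b_2)\sin(2\phi s)$ spans only a two-dimensional subspace of $\R^4$, which is consistent with $u(\R)\subset V$ and hence is allowed by the conclusion. Similarly the cases $r_1=0$, $r_2=0$, or $\phi_2=0$ make one of the four vectors drop out of $u(\R)$, reducing the relevant span to dimension $\le 3$ and again being consistent with membership in $V$.
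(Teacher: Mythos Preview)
Your proposal is correct and follows essentially the same strategy as the paper: both invoke Proposition~\ref{trentadue} to force $\Span\{u(s):s\in\R\}\subset V$, then show that under the generic hypothesis this span is all of $\R^4$, yielding the contradiction. The only cosmetic difference is in how the four-dimensionality is extracted: the paper computes $u(0),u'(0),u''(0),u'''(0)$ and reads off the span via a Vandermonde-type observation, whereas you argue directly from the linear independence of the four trigonometric functions $\cos(2\phi_k s),\sin(2\phi_k s)$ with distinct frequencies. Both routes are standard and equivalent.
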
\label{duu}
\begin{proof}
By Proposition~\ref{trentadue} it must be $\dim\Span\{u(s):s\in\R\}<4$.  This implies that $\dim\Span\{u(0), u'(0),
u''(0), u'''(0)\}<4$. By~\eqref{messico} we have
\begin{equation}\label{diciassette}
\begin{aligned}
&\Span\{u(0), u'(0), u''(0), u'''
(0)\}
\\&\qquad =
\Span\{a_1+ a_2, 2\phi_1 b_1 +2\phi_2 b_2, -4\phi_1^2 a_1- 4\phi_2^2 a_2,-8\phi_1^3 b_1-8\phi_2^3 b_2\}.                                                                                                       \end{aligned}
\end{equation} The span in the second line has dimension four if and only if $\gamma$ is generic. The thesis follows easily.
\end{proof}

Next we define the union $\mathcal{H}$ of all strict Carnot subgroups of~$\F_4$.
\begin{equation}
\mathcal{H} :=\bigcup\{\Lie(V): V\subset\R^4\text{ is a strict subspace}\}=\{(x,t)\in\F_4:\rank(t)\leq 2\}.
\end{equation}  To explain the equality, given $(x,t)\in\F_4$ with  $\rank(t)\leq 2$,  there are  $u,v\in\R^4$ such that
$t=u\wedge v$. This implies that $(x,t)=(x, u\wedge v)\in \Lie(\Span\{x,u,v\})\subset \mathcal{H}$. The opposite  inclusion
follows from the standard fact that if $\dim V\leq 3$, then any element  $t\in \Lambda^2 V $ is decomposable,
i.e.~$t=u\wedge v$ for suitable $u,v\in V$.

Proposition~\ref{giochino} can be rephrased as follows. If a generic extremal $\gamma$  meets the set $\mathcal{H}$, it can do it only strictly after the cut-time $t_\cut(\gamma)$.
  Recall that
 that $t_\cut<\infty$ for all such extremals,   by \cite{Kishimoto}.
\begin{proposition}\label{duuu}
 Let  $\gamma(\cdot, a,b,\phi)$ be a generic extremal (see Definition~\ref{generico}).
 Then
 \begin{equation}
  \label{quinto}
  \inf\{T>0:\rank (t(T))=2\}
  =\inf\{T>0: (x(T),t(T))\in\mathcal{H} \}  \gneqq t_{\cut}(\gamma).
 \end{equation}
\end{proposition}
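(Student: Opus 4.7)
The proposition is essentially a consequence of the non-genericity conclusion of Proposition~\ref{giochino} combined with the closedness of $\mathcal{H}$. My plan has two parts: first establish that $(x(T), t(T)) \notin \mathcal{H}$ for every $T \in (0, t_{\cut}(\gamma)]$, then upgrade the resulting weak inequality $\inf\{T>0: (x(T),t(T))\in\mathcal{H}\} \geq t_{\cut}(\gamma)$ to a strict one by a continuity argument.

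The equality of the two sets in \eqref{quinto} is built into the definition of $\mathcal{H}$, so I only need to analyze the condition $(x(T), t(T)) \in \mathcal{H}$. Suppose, for contradiction, that there is $T \in (0, t_{\cut}(\gamma)]$ with $(x(T), t(T)) \in \mathcal{H}$, that is, $(x(T), t(T)) \in \Lie(V)$ for some strict subspace $V \subsetneq \R^4$. The restriction $\gamma|_{[0,T]}$ is a length-minimizer: for $T < t_{\cut}(\gamma)$ this is the definition, and for $T = t_{\cut}(\gamma)$ it follows from the continuity of the sub-Riemannian distance by taking a limit in $\length(\gamma|_{[0, T_n]}) = d((0,0),\gamma(T_n))$ along $T_n \uparrow t_{\cut}(\gamma)$. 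By the riparametrization identity \eqref{parametro}, the curve $\sigma \mapsto \gamma(T\sigma, a, b, \phi) = \gamma(\sigma, Ta, Tb, T\phi)$ is then a length-minimizer on $[0,1]$ with endpoint in $\Lie(V)$. Proposition~\ref{giochino} applied to the rescaled parameters $(Ta, Tb, T\phi)$ forces non-genericity: either $T\phi_1 = T\phi_2$ or $T\phi_1 > T\phi_2$ and $(T\phi_2)(Tr_1)(Tr_2) = 0$. Since $T>0$, both alternatives translate into non-genericity of the original $(a, b, \phi)$, contradicting the hypothesis.

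This establishes that $S := \{T > 0 : (x(T), t(T)) \in \mathcal{H}\} \subset (t_{\cut}(\gamma), +\infty)$, hence $\inf S \geq t_{\cut}(\gamma)$. To upgrade this to a strict inequality, I use that $\mathcal{H}$ is closed — it coincides with the zero set of the polynomial map $t \mapsto t \wedge t \in \Lambda^4 \R^4$ — so that $S$ is closed in $(0,\infty)$. If $S$ is empty, $\inf S = +\infty > t_{\cut}(\gamma)$ and we are done. Otherwise, were $\inf S = t_{\cut}(\gamma)$, there would exist $T_n \in S$ with $T_n \downarrow t_{\cut}(\gamma)$; by continuity of $\gamma$ and closedness of $\mathcal{H}$ this would yield $t_{\cut}(\gamma) \in S$, contradicting $S \subset (t_{\cut}(\gamma), +\infty)$.

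The argument is essentially a routine packaging of Proposition~\ref{giochino} plus closedness, and I do not foresee any real obstacle. The only point to watch carefully is the membership $\gamma|_{[0,t_{\cut}]} \in$ length-minimizers at the boundary time, but as noted this is the standard closure fact for the set of minimizers and causes no difficulty.
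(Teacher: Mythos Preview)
Your proof is correct and follows essentially the same route as the paper: both argue by contradiction, invoking Proposition~\ref{giochino} to rule out any $T \le t_\cut(\gamma)$ with $\gamma(T)\in\mathcal{H}$. You are somewhat more careful than the paper on two points it leaves implicit---the minimizing property at the endpoint $T=t_\cut$ and the closedness argument needed to turn $\inf S \ge t_\cut$ into a strict inequality---but the substance is the same.
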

\begin{proof}
Since $\gamma$ is length-minimizer on $[0,T]$ and $\gamma(T) \in \Lie(V )$, then by Proposition \ref{giochino}, $\gamma$ must be non generic, which is a contradiction.
%
%
%
\end{proof}
\begin{conjecture} \label{gettiamo} Concerning~\eqref{quinto}, we conjecture that given a generic $\gamma=\gamma(\cdot, a,b,\phi)$ it must be
 \begin{equation}\label{contumelia}
  \gamma(\left]0,+\infty\right[)\cap\mathcal{H}=\varnothing.
 \end{equation}
 In view of~\eqref{quinto} and Lemma~\ref{deboluccio} below, we have the weaker statement $\gamma
 (\left]0,+\infty\right[)\cap\mathcal{H}\subset\{ \gamma(s):s\in \left]t_\cut(\gamma),c_2(\gamma) \right[\}$ for a positive constant  $ c_2= c_2(\gamma) $
 depending on the generic extremal~$\gamma$. In Remark~\ref{messina} we translate this conjecture into an inequality.
\end{conjecture}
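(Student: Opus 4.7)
The plan is to reduce the non-intersection statement to a strict inequality between two explicit trigonometric expressions and attack it via an integral representation. For arbitrary $s>0$ I would first upgrade the block form~\eqref{blocco} beyond $s=1$: since the planar rotations in~\eqref{tagliola} preserve the orthogonality of $\{a_1,b_1\}$ and $\{a_2,b_2\}$ as well as their mutual orthogonality, the vectors $a_1^s,b_1^s,a_2^s,b_2^s$ are pairwise orthogonal with $|a_k^s|=|b_k^s|=sr_k$ for every $s>0$. In the orthonormal basis $u_k^s:=a_k^s/(sr_k)$, $v_k^s:=b_k^s/(sr_k)$, Corollary~\ref{penna} gives that $t(s)\in\so(4)$ has the block form~\eqref{blocco} with
\begin{equation*}
M(s)=s^2\begin{bmatrix} r_1^2\,U(s\phi_1) & r_1 r_2\,Z(s\phi_1,s\phi_2)\\ r_1 r_2\,Z(s\phi_2,s\phi_1) & r_2^2\,U(s\phi_2)\end{bmatrix}.
\end{equation*}
Since $\rank(t(s))=2\rank M(s)$, one has $\gamma(s)\in\mathcal{H}$ if and only if $\det M(s)=0$, and~\eqref{contumelia} becomes equivalent to
\begin{equation*}
P(\psi_1,\psi_2):=U(\psi_1)U(\psi_2)-Z(\psi_1,\psi_2)Z(\psi_2,\psi_1)\neq 0\qquad\text{for all }\psi_1>\psi_2>0,
\end{equation*}
where $\psi_k:=s\phi_k$ sweeps out the open region $\{\psi_1>\psi_2>0\}$ by genericity of $\gamma$ (Definition~\ref{generico}).

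Next I would search for a representation of $P$ that makes positivity manifest. Since $\frac{d}{d\phi}(\phi-\sin\phi\cos\phi)=2\sin^2\phi$, one has $U(\phi)=\frac{1}{2\phi^2}\int_0^{\phi}\sin^2\sigma\,d\sigma$, so the first summand in $P$ is already a double integral of a nonnegative kernel. The aim is to produce a parallel double-integral formula for $Z(\psi_1,\psi_2)Z(\psi_2,\psi_1)$ (starting, say, from the fact that integrals of the type $\int_0^1\sigma\sin(\psi_1\sigma)\cos(\psi_2\sigma)\,d\sigma$ generate precisely the combinations $\psi_j\cos\psi_j\sin\psi_k$ that appear in~\eqref{zeta}), and then combine the two pieces into
\begin{equation*}
P(\psi_1,\psi_2)=\int_0^1\!\!\int_0^1 K(\sigma,\tau;\psi_1,\psi_2)\,d\sigma\,d\tau
\end{equation*}
whose kernel $K$ admits a Cauchy--Schwarz lower bound saturated precisely on the diagonal $\psi_1=\psi_2$. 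The diagonal behavior is already consistent with this picture: the limit $\lim_{(\phi_1,\phi_2)\to(\phi,\phi)}Z(\phi_1,\phi_2)=U(\phi)$ recorded just after~\eqref{azzero} forces $P(\psi,\psi)\equiv 0$.

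Three asymptotic checks are encouraging. As $\psi_2\to 0^{+}$, both $U(\psi_2)\to 0$ and $Z(\psi_2,\psi_1)\to 0$, so $P\to 0$, which is consistent because $\phi_2=0$ drops $\gamma$ into a rank-3 subgroup and is excluded by genericity. For $\psi_1,\psi_2\to\infty$ with $\psi_1-\psi_2$ bounded below, the oscillatory numerator of~\eqref{zeta} gives $Z=O\bigl(1/(\psi_1\psi_2(\psi_1-\psi_2))\bigr)$ while $U(\psi)\sim 1/(4\psi)$, so $P\sim 1/(16\psi_1\psi_2)>0$. Near the diagonal, the substitution $\psi_1=\psi+\epsilon$, $\psi_2=\psi-\epsilon$ shows that both $P$ and $\partial_\epsilon P$ vanish at $\epsilon=0$, so the proof reduces there to the one-variable question of whether $\psi\mapsto \partial_\epsilon^2 P(\psi+\epsilon,\psi-\epsilon)\bigl|_{\epsilon=0}$ is strictly positive on $\left]0,+\infty\right[$. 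Via the rescaling~\eqref{parametro} this already yields~\eqref{contumelia} in a neighborhood of the locus $\phi_1=\phi_2$ in parameter space.

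The main obstacle is the intermediate regime where $\psi_1,\psi_2$ are of moderate size and well separated: here neither the diagonal Taylor expansion nor the large-argument asymptotics applies, and since $U$ and $Z$ both oscillate, a global monotonicity or convexity argument seems unlikely to work. The realistic route is the kernel-positivity strategy above; should a closed-form sign argument prove elusive, a hybrid proof combining analytic bounds outside a sufficiently large compact box with rigorous interval arithmetic inside would suffice. An informative weaker target is to show that the zero set of $P$ in $\{\psi_1>\psi_2>0\}$ consists of the diagonal together with at most a discrete family of analytic arcs; coupled with Proposition~\ref{duuu} and Theorem~\ref{sarebbero}, this would already force $\gamma(\left]0,+\infty\right[)\cap\mathcal{H}$ to be at most countable along every generic extremal, substantially sharpening~\eqref{quinto}.
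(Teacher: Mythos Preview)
This statement is a \emph{conjecture} in the paper, not a theorem: the authors explicitly leave~\eqref{contumelia} open, translate it in Remark~\ref{messina} into the inequality $U(\psi_1)U(\psi_2)-Z(\psi_1,\psi_2)Z(\psi_2,\psi_1)>0$ on $\{\psi_1>\psi_2>0\}$, and in Remark~\ref{Taylor} verify only the Taylor behaviour near the origin. So there is no proof in the paper to compare against.

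Your reduction to the positivity of $P(\psi_1,\psi_2)$ is correct and coincides exactly with the authors' own reformulation. Beyond that, what you present is a research \emph{outline}, not a proof: the key step---the existence of an integral representation with a kernel $K$ admitting a Cauchy--Schwarz lower bound saturated on the diagonal---is asserted as a hope (``The aim is to produce\ldots'') rather than carried out, and you yourself flag the intermediate regime as ``the main obstacle''. The asymptotic checks you perform (large $\psi_k$, near-diagonal, and $\psi_2\to 0^+$) are sound and compatible with positivity, but they leave uncovered precisely the bounded region where any nontrivial zero would have to live. Your second-order diagonal analysis is in the same spirit as the paper's Remark~\ref{Taylor}, which likewise obtains positivity only on a cone $\{0<\psi_2<b\psi_1\}$ near the origin and notes the lack of uniformity.

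In short: your write-up correctly identifies the target inequality and sketches plausible attack routes, but it does not close the gap---nor does the paper, which is why this is stated as a conjecture.
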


\section{Calculation of conjugate points  along generic extremals}\label{Piperno}

In this section we analyze conjugate points along generic extremals. Recall that conjugate points are nonzero critical points $(\xi,\t)\in T^*_{(0,0)}\F_4 \simeq\F_4$ of the map $\exp:T_{(0,0)}^*\F_4\to\F_4 $, obtained by integrating the ODE~\eqref{equino}
with control $u=u(s,\xi,\t)= e^{-s \t}\xi$
and letting $\exp(\xi,\t)=(x_u(1), t_u(1))$. In our calculations, instead of using $(\xi,\t)\in T^*_{(0,0)}\F_4$, we express the exponential map in different coordinates on a ten-dimensional manifold~$\Sigma\times\Omega$ which is
diffeomorphic to  a suitable subset of $T^*_{(0,0)}\F_4$. The preliminaries concerning such manifold will be discussed in Subsections~\ref{uffa} and~\ref{buffa}.

\subsection{Description of the manifold \texorpdfstring{$\Sigma\times\Omega $}{SXO}}\label{uffa}
 First of all, let us introduce the following notation, which will be used frequently below. Given a pair of orthonormal vectors $x,y\in\R^4$, consider the curve
$
 x(\s)=x\cos\s+y\sin\s$ and $y(\s) = -x\sin\s+y\cos \s
$, for $\s\in\R$,
which rotates $x$ and $y$ counterclockwise. Then, given a differentiable function $F(x,y)$, we introduce the notation
\begin{equation}\label{astuccio}
 D_{\circlearrowleft xy}F(x,y):=\frac{d}{d\s}F(x(\s), y(\s))\Big|_{\s=0}.
\end{equation}

Let us consider $\Sigma :=\{(x_1, x_2, x_3, x_4)\in\R^{16}: x_1, x_2, x_3, x_4 \text{  are orthonormal  in }   \R^{16} \}$. Note that $\Sigma $ is a  six-dimensional embedded submanifold, being defined by the family of ten independent equations $\langle x_j, x_k\rangle=\delta_{jk}$ for $j,k=1,\dots, 4$.
 Given $(x_1, x_2, x_3, x_4)\in\Sigma\subset\R^{16}$, we have
 \begin{equation}\label{tangente}
\begin{aligned}
T_{(x_1, x_2, x_3, x_4)}\Sigma
=\Span\{& (x_2, -x_1, 0, 0), (x_3, 0, -x_1, 0), (x_4, 0, 0, -x_1),
\\&(0, x_3, -x_2, 0), (0, x_4, 0, -x_2), (0, 0, x_4, -x_3)\} .                                                        \end{aligned}
\end{equation}
Since $x_1, x_2, x_3, x_4$ are orthogonal, the given vectors are orthogonal, then independent.
In order to see that they are tangent,
consider for any $1\leq j<k\leq 4$ the path $x^{jk}:\R\to\Sigma$ defined for all $\s\in\R$ by
$x^{jk}(\s)=(x_1^{jk}(\s), x_2^{jk}(\s), x_3^{jk}(\s),x_4^{jk}(\s)   )  $, where
\begin{equation*}
 x^{jk}_j(\s) =x_j\cos\s+x_k\sin\s, \quad x^{jk}_k(\s)= -x_j \sin\s+x_k\cos\s,\quad \text{and $x^{jk}_i(\s)=x_i$ for $i\notin\{j,k\}$.}
\end{equation*}
The set of tangent  vectors $(x^{jk})' (0)$ is described in~\eqref{tangente}.

Let also $\Omega =\{(r_1, r_2, \phi_1,\phi_2)\in\R^4: r_1, r_2>0,\; 0< \phi_2< \phi_1 \}$.
We are going to parametrize generic extremals by
\begin{equation*}
 \Gamma:\Sigma\times \Omega  \to \R^4\times\Lambda^2\R^4.
\end{equation*}
Indeed,   for $k=1,2$, write
$\a_k= r_k u_k $ and $\b_k = r_k v_k$, where $u_1, u_2, v_1, v_2$ is
an orthonormal basis in $\R^4$ and as usual $\phi_2<\phi_1$. Then,  we can write
\begin{equation*}
\begin{aligned}
&(T_1\b_1+ T_2\b_2,  U_1\a_1\wedge\b_1 + Z_{12}\a_1\wedge\b_2 + Z_{21} \a_2\wedge\b_1 + U_2\a_2\wedge \b_2)
\\=:&\Gamma(u_1, u_2, v_1, v_2, r_1, r_2, \phi_1, \phi_2)=\Gamma(u,v,r,\phi)
\\= &(r_1 T_1 v_1 +r_2 T_2 v_2, r_1^2 U_1 u_1\wedge v_1+ r_1r_2 Z_{12} u_1\wedge v_2 + r_2 r_1 Z_{21} u_2\wedge v_1 + r_2^2 U_2 u_2\wedge v_2).
\end{aligned}
\end{equation*}
To relate $\Gamma $ with $\exp$ we need the following lemma
which keeps under control the change of basis bringing $a_k,b_k$ to $\a_k, \b_k$. Writing $a_k:= r_k x_k$, $b_k= r_k y_k$, $\a_k = r_k u_k$ and $\b_k= r_k v_k$, the change $(x, y, r,\phi)\substack{\mapsto}{R}(u,v,r, \phi) $ is described in the lemma below.
\begin{lemma}\label{cambiobase}
Given the set $\Sigma\times\Omega $ defined above, consider the map $R:\Sigma\times \Omega\to \Sigma\times \Omega$ defined as
 \begin{equation*}
\begin{aligned}
&  R((x_1, y_1, x_2, y_2), (r_1, r_2, \phi_1, \phi_2))
= \big((x_1 \sin \phi_1  -y_1\cos \phi_1 , x_1\cos \phi_1 +y_1\sin\phi_1,
\\&
\qquad  \qquad \qquad \qquad \qquad
\qquad x_2 \sin \phi_2  -y_2\cos \phi_2 , x_2\cos \phi_2 +y_2\sin\phi_2),  (r_1, r_2, \phi_1, \phi_2) \big).
\end{aligned}
 \end{equation*}
Then $R$ is a diffeomorphism.
\end{lemma}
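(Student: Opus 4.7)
The plan is to construct an explicit smooth inverse of $R$, which is the cleanest route given that $R$ acts as a rotation by angle $\phi_k$ in each two-dimensional plane $\Span\{x_k,y_k\}$, leaving $(r_1,r_2,\phi_1,\phi_2)$ fixed.

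First I would check that $R$ actually takes $\Sigma\times\Omega$ into itself. Writing $s_k=\sin\phi_k$, $c_k=\cos\phi_k$ and $u_k=x_ks_k-y_kc_k$, $v_k=x_kc_k+y_ks_k$ for $k=1,2$, one sees directly that $\{u_k,v_k\}$ and $\{x_k,y_k\}$ span the same $2$-plane, since the transition matrix is the rotation $\bigl[\begin{smallmatrix}s_k & -c_k\\ c_k & s_k\end{smallmatrix}\bigr]\in O(2)$. In particular $|u_k|=|v_k|=1$ and $\langle u_k,v_k\rangle=0$. Moreover, because $\Span\{x_1,y_1\}\perp\Span\{x_2,y_2\}$ by hypothesis, we get $\{u_1,v_1\}\perp\{u_2,v_2\}$, so $(u_1,v_1,u_2,v_2)\in\Sigma$. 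The $\Omega$-factor is untouched, so $R(\Sigma\times\Omega)\subset \Sigma\times\Omega$, and $R$ is obviously smooth, being polynomial in the $x_k,y_k$ and real-analytic in $\phi_k$.

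Next I would produce the inverse. Inverting the $2\times 2$ rotation $\bigl[\begin{smallmatrix}s_k & -c_k\\ c_k & s_k\end{smallmatrix}\bigr]$ gives $\bigl[\begin{smallmatrix}s_k & c_k\\ -c_k & s_k\end{smallmatrix}\bigr]$, so the natural candidate is
\begin{equation*}
R^{-1}\bigl((u_1,v_1,u_2,v_2),(r,\phi)\bigr)=\bigl((u_1s_1+v_1c_1,\,v_1s_1-u_1c_1,\,u_2s_2+v_2c_2,\,v_2s_2-u_2c_2),(r,\phi)\bigr).
\end{equation*}
This is precisely the backward change of basis already recorded in~\eqref{svarione}. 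The same argument used for $R$ shows that $R^{-1}$ sends $\Sigma\times\Omega$ to itself and is smooth. A direct substitution verifies $R\circ R^{-1}=\mathrm{Id}=R^{-1}\circ R$ on $\Sigma\times\Omega$, which finishes the proof.

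There is essentially no obstacle: the only mild point is remembering that for $R$ to be well-defined as a map of $\Sigma\times\Omega$ we must verify preservation of the orthonormality constraints defining $\Sigma$, but this is immediate from the orthogonal-block structure. Alternatively, one could just compute the Jacobian of $R$ and observe that it restricts to an invertible linear map on each tangent space using the basis~\eqref{tangente}, but the explicit inverse is more transparent and is the formula that will actually be needed later when translating between the coordinates $(a,b,\phi)$ and $(\a,\b,\phi)$.
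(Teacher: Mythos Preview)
Your proof is correct and takes a different, cleaner route than the paper. The paper computes the differential of $R$ explicitly: it writes down the ten columns $K_1,\dots,K_{10}$ (six along the tangent directions $D_{\circlearrowleft}$ to $\Sigma$, four along $\Omega$) and checks linear independence by reducing to a $4\times 4$ block and solving a small linear system. Strictly speaking that argument only yields that $R$ is a local diffeomorphism; the global statement still requires the bijectivity that you establish directly with the explicit inverse. Your approach is more transparent, exploits the fact that the rotation angles $\phi_k$ are carried unchanged to the target so one can simply rotate back, and hands you exactly the formula~\eqref{svarione} needed later; the paper's Jacobian computation, on the other hand, is closer in spirit to the subsequent calculation of $d\Gamma$ in Theorem~\ref{magno}, so it serves as a warm-up for that.
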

The proof is rather easy and will be presented in the appendix.

\subsection{The manifold \texorpdfstring{$\Sigma\times\Omega$}{SXO} is diffeomorphic to
  the nondegenerate part of   \texorpdfstring{$T_{(0,0)}^*\F_4$}{T*F4}}\label{buffa}
We construct a diffeomorphism $H:\Sigma\times\Omega\to G$, where
\begin{equation}\label{lampada}
G=\{(\xi,\t)\in T^*_{(0,0)} \F_4\simeq\F_4: \xi\neq 0 \text{ and $\t$ has four nonzero different eigenvalues} \}.
\end{equation}

\begin{proposition}\label{coperta}
Let $\Sigma\times\Omega$ where $\Omega =\{(r_1, r_2,   \phi_1, \phi_2  ): r_1, r_2>0,\;0<   \phi_2<\phi_1 \}$. Let also
$G\subset\F_4$ be the set defined above. Then,  the pair of requirements
\begin{equation}\label{diffe}
\left\{\begin{aligned}
 & \xi= r_1 x_1+r_2 x_2
 \\& \t=   2\phi_1 x_1\wedge y_1+2\phi_2 x_2\wedge y_2
\end{aligned}\right.
\end{equation}
defines a global diffeomorphism    $(x,y,r,\phi)\in  \Sigma\times\Omega\mapsto E(x,y,r,\phi) = (\xi,\t)\in   G$,
 satisfying
\begin{equation}\label{esposto}
 e^{-s  \t}\xi = r_1[x_1\cos(2\phi_1 s)+y_1\sin(2\phi_1 s)]+r_2[x_2\cos(2\phi_2 s)+y_2\sin(2\phi_2 s)]
\end{equation}
 for all $(x,y,r,\phi)\in\Sigma\times\Omega$.
\end{proposition}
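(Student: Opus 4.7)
The plan is to treat the two assertions of the proposition separately: first the explicit formula~\eqref{esposto}, then the diffeomorphism property.

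For~\eqref{esposto}, I would argue directly via the block structure of $\tau$. Under the identification $u\wedge v\simeq uv^T-vu^T$, the orthonormal basis $(x_1,y_1,x_2,y_2)$ puts $\tau$ in block-diagonal form with two $2\times 2$ antisymmetric blocks $\bigl(\begin{smallmatrix} 0 & 2\phi_k \\ -2\phi_k & 0 \end{smallmatrix}\bigr)$ acting on $\mathrm{span}(x_k,y_k)$ for $k=1,2$. Hence $e^{-s\tau}$ acts as a rotation of angle $-2\phi_k s$ on each plane, and applying it to $\xi=r_1x_1+r_2x_2$ gives exactly the right-hand side of~\eqref{esposto}.

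For the diffeomorphism claim, the key ingredient is the spectral structure of $\so(4)$: any $\tau\in\so(4)$ with four distinct nonzero (purely imaginary) eigenvalues $\pm 2i\phi_1,\pm 2i\phi_2$ and $\phi_1>\phi_2>0$ admits a canonical orthogonal decomposition $\R^4=E_1\oplus E_2$ into real invariant $2$-planes on which $\tau$ acts as a rotation generator of speed $2\phi_k$. The planes $E_k$ and the parameters $\phi_k$ are uniquely and smoothly determined by $\tau$ on the open set where the eigenvalue pairs are simple, by perturbation theory for simple eigenvalues of a normal matrix. I would then write the inverse of $E$ explicitly: given $(\xi,\tau)\in G$, read off $\phi_1>\phi_2>0$ and $E_1,E_2$ from $\tau$; decompose $\xi=\xi_1+\xi_2$ with $\xi_k\in E_k$, and set $r_k:=|\xi_k|$, $x_k:=\xi_k/r_k$, $y_k:=-\tau x_k/(2\phi_k)$. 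Since $\tau\in\so(4)$ one has $x_k\perp y_k$ and $|y_k|=1$ (because $\tau|_{E_k}$ is conformal with factor $2\phi_k$), while $E_1\perp E_2$ yields orthogonality across the two indices; hence $(x,y,r,\phi)\in\Sigma\times\Omega$ and evidently $E(x,y,r,\phi)=(\xi,\tau)$. The map $E$ is polynomial, and the inverse just constructed is smooth by the same spectral perturbation argument; injectivity is transparent from the canonicity of $(\phi_k,E_k)$ in $\tau$ and of $(r_k,x_k,y_k)$ in $(\xi,\tau)$ once $E_k$ is fixed.

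The main obstacle, as I read it, is implicit in the statement of the proposition: the requirement $r_1,r_2>0$ in~\eqref{diffe} forces both projections of $\xi$ onto $E_1,E_2$ to be nonzero, so one must interpret the set $G$ in~\eqref{lampada} as also encoding this genericity (matching the condition in Definition~\ref{generico}); otherwise $E$ cannot be surjective onto the set literally defined there. Once this is clarified, the remainder of the proof consists of the spectral decomposition and orthogonality verifications outlined above, which are routine. An alternative, should one prefer to avoid explicit spectral theory, is to use the inverse function theorem: injectivity together with a Jacobian computation analogous to Lemma~\ref{cambiobase} would show that $dE$ has full rank everywhere, and connectedness of $G$ then upgrades injective local diffeomorphism to global diffeomorphism onto its image.
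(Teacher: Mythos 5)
Your proof is correct, and it diverges from the paper's in one substantive way. The paper's proof has four steps: it first checks $E(\Sigma\times\Omega)\subseteq G$, then establishes existence and uniqueness of the preimage by the same spectral argument you use (eigenvalues $\pm 2i\phi_k$ determine $\phi_1>\phi_2$, the eigenvector $\bar x_k+i\bar y_k$ is fixed up to a rotation by $\theta_k$, and $\theta_k$ is pinned down by requiring $\langle y_k,\xi\rangle=0$, $\langle x_k,\xi\rangle>0$ --- exactly your normalization $x_k=\xi_k/|\xi_k|$), then proves smoothness of $E^{-1}$ by writing out the full $10\times 10$ Jacobian of $E$ in the tangent frame $D_{\circlearrowleft x_jx_k},\p_{r_k},\p_{\phi_k}$ and checking it has full rank, and finally verifies~\eqref{esposto} via the closed formula~\eqref{essilor} for $e^{-s\tau}$ (equivalent to your rotation-on-invariant-planes argument). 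Where you differ is the third step: instead of the Jacobian computation you construct the inverse explicitly ($y_k=-\tau x_k/(2\phi_k)$, which is consistent with the convention $\tau x_k=-2\phi_k y_k$) and invoke smooth dependence of simple eigenvalues and eigenprojections. Both are valid; the paper's computation is longer but reusable (the same matrix bookkeeping recurs in Theorem~\ref{magno}), while your spectral-perturbation argument is shorter and conceptually cleaner. Your observation about $G$ is also a legitimate catch: as defined in~\eqref{lampada}, $G$ only requires $\xi\neq 0$, whereas surjectivity of $E$ forces both projections of $\xi$ onto the invariant planes of $\tau$ to be nonzero; the paper's Step~1 implicitly assumes this when it asserts the existence of $\theta_k$ with $\langle x_k,\xi\rangle>0$. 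The definition of $G$ should be tightened accordingly, as you propose.
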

Before proving the proposition
observe the following fact.
\begin{remark}
In view of Lemma~\ref{cambiobase} and of Proposition~\ref{coperta}, letting $H:=E^{-1}: G\to \Sigma\times\Omega$,  we have
\begin{equation}\label{demos}
\begin{aligned}
 \exp(\xi, \tau)= \Gamma(R(H(\xi, \tau))), \quad\text{with}\quad H(\xi, \tau) = (x_1, y_1, x_2, y_2, r_1, r_2 ,\phi_1, \phi_2)\in\Sigma\times\Omega,
 \end{aligned}
\end{equation}
where $x_k: \frac{a_k}{  |a_k|} =\frac{a_k }{r_k}  $, $y_k=\frac{b_k }{r_k}$, while the function $(\xi,\tau)\in \R^4\times\Lambda^2\R^4\mapsto
H(\xi, \tau) =(x_1, y_1, x_2, y_2, r_1, r_2 ,\phi_1, \phi_2) \in \Sigma\times\Omega$
satisfies
\begin{equation}\label{identita}
\begin{aligned}
e^{-s\t}\xi&= r_1 [x_1\cos(2\phi_1 s)+   y_1\sin (2\phi_1 s)] + r_2
[x_2\cos(2\phi_2 s)+   y_2\sin (2\phi_2 s)]
\\&
=a_1\cos(2\phi_1  s)+   b_1\sin (2\phi_1 s) +
a_2\cos(2\phi_2 s)+   b_2\sin (2\phi_2 s)
. \end{aligned}                                     \end{equation}
Therefore, if   $(\xi,\t)\in G$,    we have that $d_{(\xi, \tau)}\exp$
is singular if and only if $d_{( u,v,r,\phi )}\Gamma$ is singular. Here $(u,v,r,\phi)=R(x,y,r,\phi)$. The following diagram can help.
\begin{equation*}\begin{tikzpicture}[every node/.style={midway}]
  \matrix[column sep={20em,between origins}, row sep={4em}] at (0,0) {
    \node(R) {$(x,y,r,\phi)\in\Sigma\times\Omega$}  ; & \node(S) {$(u,v,r,\phi)\in\Sigma\times\Omega$}; \\
    \node(R/I) {$(\xi,\t)\in G\subset T^*_{(0,0)}\F_4$}; & \node (T) {$\exp(\xi,\t)=\gamma(1,a,b,\phi)=\Gamma(u,v,r,\phi)$};\\
  };
  \draw[->] (R/I) -- (R) node[anchor=east]  {$H$};
  \draw[->] (R) -- (T) node[anchor=west]  {$\quad\gamma(1,\cdot,\cdot,\cdot)$};
  \draw[->] (R) -- (S) node[anchor=south] {$R$};
  \draw[->] (S) -- (T) node[anchor=west] {$\Gamma$};
  \draw[->] (R/I) -- (T) node[anchor=north] {$\exp$};
\end{tikzpicture}\end{equation*}
The map in the diagonal   acts as follows $(x,y,r,\phi)\mapsto \gamma(1, r_1x_1, r_1y_1, r_2x_2, r_2y_2, \phi_1,\phi_2)=\gamma(1,a_1,b_1,a_2,b_2,\phi_1,\phi_2)=\gamma(1,a,b,\phi)$.
\end{remark}

\begin{proof}[Proof of Proposition~\ref{coperta}]
The proof is articulated in four steps.

Step 0. We show first that $E (\Sigma\times\Omega )\subseteq G$.

Step 1. We show that for any given $(\xi,\t)\in G$ there is a unique    $(x,y,r,\phi)\in\Sigma\times \Omega$   such that~\eqref{diffe} holds.

Step 2. We show that the differential of $E$ is nonsingular at any point   $(x,y,r,\phi)$.

Step 3. We show that~\eqref{esposto} holds.

Step 0  follows from the fact that $r_1, r_2>0$ which implies $\xi\neq 0$. Furthermore, it is easy to check for $k=1,2$, that $x_k\pm i y_k$  are eigenvectors corresponding to   $\pm  2i\phi_k$ of the matrix   $\t = 2\phi_1 (x_1 y_1^T - y_1 x_1^T)+2\phi_2 (x_2 y_2^T-y_2 x_2^T)$. Thus $\t$ has four different nonzero  eigenvalues $\pm 2i\phi_1$ and $\pm 2i\phi_2$.

Let us prove Step 1. Given $(\xi, \t)\in G$, by definition of $G$, the matrix $\t$ has four nonzero different eigenvalues. Thus we find unique  positive numbers    $\phi_1,\phi_2$ such that $\phi_2 <\phi_1$ and
$\pm 2i\phi_k $    are the eigenvalues of $\t$.  Let  $\bar x_k\pm i \bar y_k$ be an eigenvector corresponding to    $\pm 2i\phi_k$.   By standard properties of antisymmetric matrices, it must be $|\bar x_k|=|\bar y_k|$ and
$\langle \bar x_k, \bar y_k\rangle=0$.\footnote{If $x+iy\in\C^n$ is eigenvector of $A\in\mathfrak{so}(n)$ with eigenvalue $i\la\neq 0$, then we have $Ax=-\la y$ and $Ay=\la x$. Thus we have
$ 0=\langle Ax, x\rangle =-\la\langle y,x\rangle$ and
$
\la^2 |x|^2=\langle\la^2 x, x\rangle =\langle-A^2 x, x\rangle=\langle Ax,Ax\rangle =\langle -\la y,-\la y\rangle=\lambda^2|y|^2.
$
} Requiring also that all $\bar x_k, \bar y_k$ have unit norm in~$\R^4$,
the eigenvector $\bar x_k+i\bar y_k$ is uniquely determined up to a   rotation of the form $(\bar x_k, \bar y_k)\mapsto(x_k,y_k) = (\bar x_k\cos\theta_k +\bar y_k\sin\theta_k, -\bar x_k\sin\theta_k+\bar y_k\cos\theta_k) $.
Requirement in the first line of~\eqref{diffe} gives uniquely the choice of $\theta_k$, namely the unique choice making  $\langle y_k,\xi\rangle =0$ and $\langle x_k,\xi\rangle>0$ for $k=1,2$. Then,  letting $r_k = \langle x_k, \xi \rangle$, we find $\xi = r_1 x_1+r_2x_2$.

Let us pass to Step 2. To calculate the differential of   $E$, we need to differentiate $E$ on the manifold $\Sigma\times\Omega$. The first six columns of the matrix below contain derivatives along the tangent space to $\Sigma $ at $(x_1, y_1, x_2, y_2)$.  Notation $D_{\circlearrowleft x_1 y_1}$ and similar are explained in~\eqref{astuccio}.
In the first row  $\pi_{x_j}, \pi_{x_j\wedge y_k}$ and similar symbols denote components along $x_j, x_j\wedge y_k$ and so on.
Ultimately the differential   is represented by the following matrix.
\begin{equation*}
  \begin{aligned}
\arraycolsep=3pt\def\arraystretch{1.2}
    \begin{NiceArray}{l \left[cccccccccc\right]}[first-row]   &    D_{\circlearrowleft x_1 y_1} & D_{\circlearrowleft x_1 x_2} & D_{\circlearrowleft x_1 y_2}
 & D_{\circlearrowleft y_1 x_2} & D_{\circlearrowleft y_1 y_2} & D_{\circlearrowleft x_2 y_2}  & \p_{r_1}
 &\p_{r_2} &   \p_{\phi_1}&\p_{\phi_2}
    \\
 \pi_{x_1} & 0& -r_2 &0&0&0&0& 1&0&0 &0
\\
\pi_{y_1} & r_1& 0 &0&-r_2& 0&0& 0&0&0 &0
  \\
\pi_{x_2}  &  0 & r_1 & 0&0&0&0& 0&1&0&0
 \\
\pi_{y_2} & 0&0&r_1&0&0&r_2& 0&0&0&0
 \\
\pi_{x_1\wedge y_1} &0 &0& 0&0&0&0&0&0&2&0
 \\
\pi_{x_1\wedge x_2}  & 0&0&2\phi_2  &2\phi_1 & 0&0&0&0&0&0
 \\
\pi_{x_1\wedge y_2}  & 0&-2\phi_2 & 0 & 0 & 2\phi_1  & 0&0&0&0&0
 \\
\pi_{y_1\wedge x_2} & 0& -2\phi_1 & 0&0& 2\phi_2 & 0&0&0&0&0
 \\
\pi_{y_1\wedge y_2}  & 0&0&-2\phi_1&-2\phi_2 & 0&0&0&0&0&0
 \\
\pi_{x_2\wedge y_2\hspace{.5em}}   & 0&0&0&0&0&0&0&0&0&2
    \end{NiceArray}
\end{aligned}.
\end{equation*}
It is easy to see that calling $W_k$ the $k-$th column, we have $\Span \{W_1, W_6, W_7, W_8, W_9, W_{10}\}=
\Span\{u_1, v_1, u_2, v_2, u_1\wedge v_1, u_2\wedge v_2\}$. Thus, to check that the matrix has full rank it suffices to check that the square matrix
\begin{equation*}
\left[ \begin{smallmatrix}
0&2\phi_2&2\phi_1
&0
\\ \\ -2\phi_2 & 0 & 0 & 2\phi_1
\\   \\ -2\phi_1 & 0 & 0 & 2\phi_2
\\ \\ 0 & -2\phi_1 & -2\phi_2 & 0
 \end{smallmatrix}\right]
\end{equation*}
has full rank, which is true, because  $0<2\phi_2 < 2\phi_1 $.

To conclude the proof we prove Step 3. First of all it is easy to check that, under~\eqref{diffe} we have
\begin{equation}\label{essilor}
 e^{-s  \t } = \sum_{k=1}^2\Big\{  (x_k x_k^T + y_k y_k^T)\cos(2\phi_k s) -(x_ky_k^T - y_k x_k^T)\sin(2\phi_k s)\Big\}
\end{equation}
(the right and left-hand side have the same $\frac{d}{ds}$- derivative and agree at $s=0$). To prove~\eqref{esposto}, it suffices to multiply~\eqref{essilor} with $\xi = r_1 x_1+ r_2 x_2$.
\end{proof}

  To conclude this preliminary discussion, we observe that for $(\xi,\t)\in G\subset\F_4$,
$\exp(\xi, \t)$ is conjugate to the origin along the trajectory of the control $u(s)=e^{-s\t}\xi$ if and only if
$d_{R(H(\xi,\t))}\Gamma =d_{(u,v,r,\phi)}\Gamma $ is singular.

\subsection{Calculation of the differential of \texorpdfstring{$\Gamma$}{Gamma}} Next, in order to get information on the conjugate locus, we calculate explicitly
 the differential of $\Gamma$.
We start from
\begin{equation}\label{esselunga}
\begin{aligned}
& \Gamma(u_1, v_1, u_2, v_2, r_1, r_2, \phi_1, \phi_2)
=\Gamma(u,v,r,\phi)
\\& =\Big( r_1 T_1v_1+r_2 T_2 v_2 ,
r_1^2 U_1 u_1\wedge v_1 + r_1 r_2 Z_{12} u_1\wedge v_2 + r_1 r_2 Z_{21} u_2\wedge v_1 + r_2^2 U_2 u_2\wedge v_2\Big),
\end{aligned}
\end{equation}
where we recall that $\Gamma(u,v,r,\phi)=\gamma(1,a,b,\phi)$.
\begin{theorem}\label{magno}
Let  $0<\phi_2<\phi_1 $, Let also $r_1>0$, $r_2>0$ and $(u_1, v_1, u_2, v_2)\in\Sigma$. The point~$\Gamma(u,v,r,\phi)=\gamma(1,a,b,\phi)$ is conjugate
to the origin along $\gamma(\cdot, a,b,\phi)$ if and only if the following matrix   is singular.
 \footnotesize
\begin{equation}
  \begin{aligned}
\arraycolsep=1.4pt\def\arraystretch{1.4}
    \begin{NiceArray}{l \left[cccccccccc\right]}[first-row]
      & D_{\circlearrowleft u_1 v_1} & D_{\circlearrowleft u_2v_2} &
 D_{\circlearrowleft u_1 v_2} &  D_{\circlearrowleft u_2 v_1} &
      D_{\circlearrowleft u_1 u_2}   & D_{\circlearrowleft v_1 v_2}  &
      \partial /\partial_{r_1}   & \partial  /\partial_{r_2} & \p/\p {\phi_1} &\p/\p {\phi_2}
      \\
\pi_{ \a_1}   &  -T_1 & 0 & -r_2^2 T_2 & 0 & 0 & 0 & 0 & 0 & 0 & 0
\\
\pi_{\b_1}   & 0&   0 & 0 & 0 &0 & -r_2^2 T_2 & T_1 & 0 & -2V_1 & 0
\\
\pi_ {\a_2}   &  0 & - T_2 & 0 & -r_1^2T_1 &0 & 0 & 0 & 0 & 0 & 0
\\
\pi_{ \b_2 } &  0&0&0&0 &0 & r_1^2 T_1 & 0 & T_2 & 0 & -2V_2
 \\
 \pi_{{\a_1\wedge \b_1 }}\hspace{.5em} \quad   &  0&0&0&0& -r_2^2 Z_{21}& -r_2^2Z_{12} & 2U_1 & 0  &  \frac{\cos \phi_1}{\phi_1} V_1 & 0
 \\
 \pi_{ \a_1\wedge \a_2}   &
Z_{21} & - Z_{12} &      r_2^2 U_2  &  -r_1^2 U_1 &0 & 0 & 0 & 0 & 0 & 0
  \\
\pi_{\a_1\wedge  \b_2  } & 0&0& 0&0 & -r_2^2 U_2 & r_1^2 U_1 & Z_{12} & Z_{12} & (\p_1Z)  (\phi_1, \phi_2) &   (  \p_2 Z )(\phi_1, \phi_2)
\\
\pi_{\b_1\wedge \a_2} &  0&0&0&0& -r_1^2U_1 & r_2^2 U_2 & -Z_{21} & -Z_{21} & - (\p_2 Z)(\phi_2, \phi_1)   & -(\p_1 Z) (\phi_2, \phi_1)
\\
\pi_{\b_1\wedge \b_2} & Z_{12}&-Z_{21}  & - r_1^2   U_1  &
r_2^2 U_2 & 0 & 0 & 0 & 0 & 0 & 0
\\
\pi_{\a_2\wedge \b_2} & 0 & 0&0&0 & r_1^2 Z_{12} & r_1^2 Z_{21} & 0 & 2U_2 & 0 &\frac{\cos\phi_2}{\phi_2}V_2
    \end{NiceArray}
\end{aligned}. \label{dieci}
\end{equation}
\normalsize
%
%
%
Equivalently, $\Gamma(u,v,r,\phi) $  is conjugate if and only if at least one of the following two matrices is singular:
\begin{equation}
\label{M1}
M_1 =
\left[\begin{smallmatrix}
   -T_1 & 0 & -r_2^2 T_2 & 0
\\ \\
   0 & - T_2 & 0 & -r_1^2T_1
\\ \\
Z_{21} & - Z_{12} &      r_2^2 U_2  &  -r_1^2 U_1
\\ \\
 Z_{12}&-Z_{21}  & - r_1^2   U_1  &
r_2^2 U_2
\end{smallmatrix}\right]
\end{equation}
or
\begin{equation}\label{seipersei}
 M_2=\left[\begin{smallmatrix}
 0 & -r_2^2 T_2 & T_1 & 0 & -2V_1 & 0
\\ \\
    0 & r_1^2 T_1 & 0 & T_2 & 0 & -2V_2
\\ \\
     -r_2^2 Z_{21}& -r_2^2Z_{12} & 2U_1 & 0  &  \frac{\cos \phi_1}{\phi_1} V_1 & 0
 \\ \\
     -r_2^2 U_2 & r_1^2 U_1 & Z_{12} & Z_{12} &  (\p_1 Z)(\phi_1, \phi_2) &    (\p_2 Z)(\phi_1, \phi_2)
\\ \\
     -r_1^2U_1 & r_2^2 U_2 & -Z_{21} & -Z_{21} & -( \p_2 Z)(\phi_2, \phi_1) & - (\p_1Z)(\phi_2, \phi_1)
\\ \\
    r_1^2 Z_{12} & r_1^2 Z_{21} & 0 & 2U_2 & 0 &\frac{\cos\phi_2}{\phi_2}V_2
\end{smallmatrix}\right].
\end{equation}
\end{theorem}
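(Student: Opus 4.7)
The plan is to apply the factorization $\exp = \Gamma \circ R \circ H$ from \eqref{funzionecomposta}, where by Lemma~\ref{cambiobase} and Proposition~\ref{coperta} the maps $R$ and $H$ are diffeomorphisms of $G$ with $\Sigma\times\Omega$. Consequently $\gamma(1,a,b,\phi) = \Gamma(u,v,r,\phi)$ is conjugate to the origin if and only if $d_{(u,v,r,\phi)}\Gamma$ is singular, and the task reduces to writing $d\Gamma$ as a $10\times 10$ matrix and identifying a block-diagonal structure.

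On the ten-dimensional space $T_{(u,v,r,\phi)}(\Sigma\times\Omega)$ I use the basis made of the six rotation fields $D_{\circlearrowleft u_i v_j}$ (for $i,j\in\{1,2\}$), $D_{\circlearrowleft u_1 u_2}$, $D_{\circlearrowleft v_1 v_2}$ inherited from \eqref{tangente}, together with the coordinate fields $\p_{r_1}, \p_{r_2}, \p_{\phi_1}, \p_{\phi_2}$; on the image $\R^4 \oplus \Lambda^2\R^4$ I use $\{\a_1,\b_1,\a_2,\b_2\}$ and the six wedges $\{\a_1\wedge\b_1, \a_1\wedge\a_2, \a_1\wedge\b_2, \b_1\wedge\a_2, \b_1\wedge\b_2, \a_2\wedge\b_2\}$. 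Each column of $d\Gamma$ is then obtained by applying the corresponding tangent vector directly to~\eqref{esselunga}: rotation columns come from substituting the $\sigma$-derivatives of the rotated pair of orthonormal vectors, while the scalar columns follow from the identities $T_k' = -2 V_k$ and $U_k' = (\cos\phi_k/\phi_k) V_k$ together with the partials of $Z$ read off \eqref{zeta}. Collecting coefficients (up to column-dependent positive rescalings by monomials in $r_1, r_2$, which do not affect singularity) reproduces precisely the matrix~\eqref{dieci}.

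The claimed factorization then comes from a \emph{parity observation}. Split the ten columns into mixed-type $\mathcal{A} := \{\circlearrowleft u_1 v_1, \circlearrowleft u_2 v_2, \circlearrowleft u_1 v_2, \circlearrowleft u_2 v_1\}$ (the four rotations swapping a $u$-vector with a $v$-vector) and pure-type $\mathcal{B} := \{\circlearrowleft u_1 u_2, \circlearrowleft v_1 v_2, \p_{r_1}, \p_{r_2}, \p_{\phi_1}, \p_{\phi_2}\}$, and split the rows as $V_1 := \Span\{\a_1, \a_2, \a_1\wedge\a_2, \b_1\wedge\b_2\}$ versus $V_2 := \Span\{\b_1, \b_2, \a_1\wedge\b_1, \a_1\wedge\b_2, \b_1\wedge\a_2, \a_2\wedge\b_2\}$. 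Inspect \eqref{esselunga}: in the horizontal part $r_1 T_1 v_1 + r_2 T_2 v_2$, every $\mathcal{A}$-rotation turns some $v_k$ into a $u_j$ and contributes only along $\a$'s, whereas every $\mathcal{B}$-direction either permutes the $v_k$'s or differentiates a scalar coefficient in front of a $v$, producing only $\b$-contributions. In the vertical part, all four summands are mixed wedges $u_i\wedge v_j$: an $\mathcal{A}$-rotation sends each of them into a $u\wedge u$ or a $v\wedge v$ wedge (hence into $V_1$), while a $\mathcal{B}$-rotation permutes mixed wedges among themselves and the scalar derivatives preserve their wedge shape (hence $V_2$). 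Thus $\mathcal{A}$-columns are supported only in $V_1$-rows and $\mathcal{B}$-columns only in $V_2$-rows, and after this reordering \eqref{dieci} becomes block-diagonal with blocks agreeing with $M_1$ of \eqref{M1} and $M_2$ of \eqref{seipersei}; therefore $d\Gamma$ is singular iff $\det M_1 = 0$ or $\det M_2 = 0$.

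The main obstacle is purely notational: tracking the $r_1, r_2$ scalings that enter each column and matching the entries of $M_2$ with the explicit $\phi$-derivatives of $T, U, V, Z$. The conceptual core---the $u\leftrightarrow v$ parity giving the block decomposition---is immediate once the appropriate basis on the image has been chosen.
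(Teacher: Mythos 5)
Your proposal is correct and follows essentially the same route as the paper: compute the ten columns of $d\Gamma$ in the rotation/coordinate basis on $\Sigma\times\Omega$ and the $\a,\b$-adapted basis on the target, then observe that the four $u\leftrightarrow v$ rotation columns land in $\Span\{\a_1,\a_2,\a_1\wedge\a_2,\b_1\wedge\b_2\}$ while the remaining six land in the complementary span, yielding the block factorization into $M_1$ and $M_2$. The ``parity observation'' you describe is exactly the column-support argument the authors use at the end of their proof.
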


In~\eqref{dieci}
 $ \p_1 Z  $ and $\p_2 Z$ denote derivatives with respect to the first and the second argument.
    Symbols $\pi_{\a_k}$, $\pi_{\a_j\wedge\b_k}$ and similar denote projections along $\a_k= r_ku_k$ and $\b_k= r_k v_k$ for $k=1,2$.
 We also denoted $V(\phi)=\frac{\sin\phi -\phi\cos\phi}{2\phi^2}$.

 Note that singularity
 of $M_1$ and $M_2$ depends on the four variables $r_1, r_2, \phi_1$ and $\phi_2$ but not on variables on $\Sigma$, by rotation invariance.

 The proof of  Theorem~\ref{magno} is postponed to the appendix.

\section{Analysis of conjugate points with \texorpdfstring{$\det M_1=0$}{det M1=0}}\label{quartina}
This section is devoted to the analysis of properties of conjugate points coming from the factor $\det M_1=0$.
\begin{lemma}
Let $(u,v,r,\phi)\in\Sigma\times\Omega$ where $r_1, r_2>0$ and $0<\phi_2<\phi_1$. Let~$M_1$ be the matrix in~\eqref{M1}. Then,  if $\sin\phi_1=\sin\phi_2=0$,  then $\det M_1=0$. If $\sin^2\phi_1+\sin^2\phi_2>0$, then $M_1$ is singular if and only if
 \begin{equation} \label{quattrosei}
  \det\left[ \begin{matrix}
r_2^2(T_2Z_{21}-T_1U_2) & r_1^2(T_1 Z_{12}-T_2 U_1)
\\  r_2^2 T_2Z_{12}+ r_1^2 T_1 U_1 & r_1^2 T_1 Z_{21} + r_2^2 T_2 U_2
       \end{matrix}\right]=0.
 \end{equation}
\end{lemma}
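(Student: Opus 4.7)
The approach is to establish, as a polynomial identity in the symbols $T_1,T_2,U_1,U_2,Z_{12},Z_{21},r_1,r_2$, the relation
\begin{equation*}
\det M_1 \;=\; -\det\begin{bmatrix} r_2^2(T_2 Z_{21}-T_1 U_2) & r_1^2(T_1 Z_{12}-T_2 U_1) \\ r_2^2 T_2 Z_{12}+r_1^2 T_1 U_1 & r_1^2 T_1 Z_{21}+r_2^2 T_2 U_2 \end{bmatrix}.
\end{equation*}
Once this identity is in hand, both assertions of the lemma follow at once. Indeed, in the degenerate case $\sin\phi_1=\sin\phi_2=0$ we have $T_1=T_2=0$, and the numerator of \eqref{zeta} forces $Z_{12}=Z_{21}=0$ as well, so both sides vanish trivially; in the complementary case $\sin^2\phi_1+\sin^2\phi_2>0$ the claimed biconditional $\det M_1=0\Leftrightarrow \eqref{quattrosei}=0$ is immediate from the identity.

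To prove the identity, I would use the Schur-complement decomposition. Partition $M_1=\bigl[\begin{smallmatrix} A & B \\ C & D\end{smallmatrix}\bigr]$ with
\begin{equation*}
A=-\operatorname{diag}(T_1,T_2),\quad B=-\operatorname{diag}(r_2^2 T_2,\,r_1^2 T_1),\quad C=\begin{bmatrix} Z_{21} & -Z_{12} \\ Z_{12} & -Z_{21}\end{bmatrix},\quad D=\begin{bmatrix} r_2^2 U_2 & -r_1^2 U_1 \\ -r_1^2 U_1 & r_2^2 U_2\end{bmatrix}.
\end{equation*}
On the Zariski-open locus $\{T_1T_2\neq 0\}$ the block $A$ is invertible and $\det M_1=\det A\cdot\det(D-CA^{-1}B)$. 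Since $A^{-1}B$ is the diagonal matrix with entries $r_2^2T_2/T_1$ and $r_1^2T_1/T_2$, one reads off the four entries of $D-CA^{-1}B$ by inspection:
\begin{equation*}
D-CA^{-1}B=\begin{bmatrix} \dfrac{r_2^2(T_1 U_2-T_2 Z_{21})}{T_1} & \dfrac{r_1^2(T_1 Z_{12}-T_2 U_1)}{T_2} \\[6pt] -\dfrac{r_1^2 T_1 U_1+r_2^2 T_2 Z_{12}}{T_1} & \dfrac{r_2^2 T_2 U_2+r_1^2 T_1 Z_{21}}{T_2}\end{bmatrix}.
\end{equation*}
Taking its determinant, the factor $1/(T_1T_2)$ produced by the denominators cancels exactly against $\det A=T_1T_2$, and a short expansion matches the result, up to an overall sign, with the $2\times 2$ determinant displayed in \eqref{quattrosei}. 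Since both sides of the identity are polynomials in eight variables that coincide on the dense open set $\{T_1T_2\neq 0\}$, the identity holds everywhere.

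The main (minor) obstacle is simply the bookkeeping in the Schur step: checking that the denominators $T_1$, $T_2$ cancel and that the signs work out. As an alternative route with no conceptual difference, one could cofactor-expand $\det M_1$ directly along its first two rows, which have only two nonzero entries each (in columns $\{1,3\}$ and $\{2,4\}$ respectively), reducing the computation to four $2\times 2$ subdeterminants of the $(C\,|\,D)$ block and matching against the expansion of \eqref{quattrosei}; the two methods yield the same polynomial identity. No genuine conceptual difficulty is expected.
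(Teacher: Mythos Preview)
Your proof is correct and follows essentially the same route as the paper: both reduce $\det M_1$ to the $2\times 2$ determinant~\eqref{quattrosei} by Gaussian elimination on the first two rows (the paper via explicit column replacements $C_1\mapsto r_2^2T_2C_1-T_1C_3$, $C_4\mapsto -r_1^2T_1C_2+T_2C_4$, you via the equivalent Schur-complement formula). The only cosmetic difference is that the paper case-splits on which of $T_1,T_2$ is nonzero, whereas you compute once on $\{T_1T_2\neq 0\}$ and extend by polynomial identity; your handling of the degenerate case $\sin\phi_1=\sin\phi_2=0$ is also fine (and in fact slightly more than needed, since $T_1=T_2=0$ already kills the first two rows of $M_1$).
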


\begin{proof}
 If $\sin\phi_1=\sin\phi_2=0$, then $T_1=T_2=0$, so that $M_1$ is singular. Assume now that $\sin\phi_2\neq 0$, which implies $T_2\neq 0$. Changing the first column $C_1$ with $r_2^2T_2 C_1 - T_1 C_3$ and the fourth with $-r_1^2 T_1 C_2+T_2 C_4$ we get that
\begin{equation*}
 M_1\sim \left[ \begin{smallmatrix}
 0 & 0 & -r_2^2T_2 &  0
\\ \\ 0 &-T_2 & 0 &  0
 \\ \\
 r_2^2(T_2Z_{21}-T_1U_2) & - Z_{12} & r_2^2 U_2 & r_1^2(T_1 Z_{12}-T_2 U_1)
\\   \\ r_2^2 T_2Z_{12}+ r_1^2 T_1 U_1 &  -Z_{21} & -r_1^2U_1 & r_1^2 T_1 Z_{21} + r_2^2 T_2 U_2
       \end{smallmatrix}\right]
\end{equation*}
and the determinant~\eqref{quattrosei} appears.

If instead $\sin\phi_1\neq 0 $ we change $C_3\mapsto T_1 C_3 -r_2^2 T_2 C_1$ and $C_2\mapsto r_1^2 T_1 C_2 - T_2 C_4$. After some computation, we discover that the determinant is the same.
\end{proof}

\begin{remark}[Degeneration to $\F_3$] \label{aosta} The determinant~\eqref{quattrosei} degenerates correctly if $\phi_2 \to 0$. Indeed,  keeping the
    limits~\eqref{azzero} into account,
  the point $(x,t)$ becomes $(x,t)=(r_1T_1 v_1+r_2 v_2, r_1^2 U_1 u_1\wedge v_1 + r_1 r_2 V_1 u_1\wedge v_2)$, which is the form of points along extremal curves in $\Lie(u_1, v_1, v_2)$, see~\cite{MontanariMorbidelli17}. Furthermore, \ref{quattrosei} takes the form
$
   r_1^2(  U_1-T_1 V_1)(r_2^2 V_1 + r_1^2 T_1 U_1)=0 $.
 Since $U_1-T_1 V_1>0$ for all $\phi_1>0$,  see \cite[Lemma~3.1]{MontanariMorbidelli17}, it must be $\frac{r_2^2}{r_1^2}=-\frac{T_1 U_1}{V_1}$, compare \cite[Theorem~4.2]{MontanariMorbidelli17}.
\end{remark}

\begin{theorem}\label{invernizzi}
 Let $(u,v)\in\Sigma$ and consider $r_1, r_2>0$ and $0<\phi_2< \phi_1$.
 Consider  the corresponding generic extremal point  $(x,t):=\Gamma(u,v,r,\phi)$ appearing in~\eqref{esselunga}.
Then, the following two properties are equivalent:
\begin{equation}\label{primavera}
\det M_1 =0
\end{equation}
and
\begin{equation}\label{autunno}
  t^2 x \in\Span\{x\}.
\end{equation}
\end{theorem}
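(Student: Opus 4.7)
The plan is to show that both conditions~\eqref{primavera} and~\eqref{autunno} are equivalent to the same polynomial identity in $r_1^2, r_2^2$, namely equation~\eqref{AB} of Theorem~\ref{MMM}, organized via the auxiliary functions
\[
  A(\phi_1,\phi_2)=T_1 U_1(T_1 Z_{12}-T_2 U_1),\qquad B(\phi_1,\phi_2)=T_2 Z_{12}(T_1 Z_{12}-T_2 U_1).
\]

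For condition~\eqref{autunno}, I would exploit the block form~\eqref{blocco} of $t$ in the ordered orthonormal basis $(u_1,u_2,v_1,v_2)$, in which
\[
  t=\begin{bmatrix} 0 & M \\ -M^T & 0 \end{bmatrix}\quad\text{and hence}\quad t^2=\begin{bmatrix} -MM^T & 0 \\ 0 & -M^T M \end{bmatrix}.
\]
Since $x=r_1 T_1 v_1+r_2 T_2 v_2$ lies entirely in the second block, writing $x_v=(r_1 T_1, r_2 T_2)^T\in\R^2$, we have $t^2 x=(0,-M^T M x_v)$. Thus, when $x\neq 0$, condition~\eqref{autunno} is equivalent to collinearity of $x_v$ and $M^T M x_v$ in $\R^2$, that is, to the vanishing of the $2\times 2$ determinant $\det\bigl[x_v\mid M^T M x_v\bigr]$. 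A direct expansion of $M^T M$ and $M^T M x_v$, followed by grouping by powers of $r_1, r_2$, factors this determinant as $r_1 r_2$ times a polynomial whose coefficient of $r_1^4$ simplifies (after one cancellation) to $T_1 U_1(T_1 Z_{12}-T_2 U_1)=A(\phi_1,\phi_2)$, whose coefficient of $r_2^4$ equals $-A(\phi_2,\phi_1)$ by the index swap $1\leftrightarrow 2$, and whose coefficient of $r_1^2 r_2^2$ reduces to $T_1^2 U_2 Z_{21}-T_2^2 U_1 Z_{12}+T_1 T_2(Z_{12}^2-Z_{21}^2)=B(\phi_1,\phi_2)-B(\phi_2,\phi_1)$. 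This is precisely the identity~\eqref{AB}.

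For condition~\eqref{primavera}, I would invoke the preceding lemma, which in the non\-degenerate range $\sin^2\phi_1+\sin^2\phi_2>0$ reduces $\det M_1=0$ to the vanishing of the $2\times 2$ determinant~\eqref{quattrosei}. Each of its four entries contains one of the factors $T_1 Z_{12}-T_2 U_1$ or $T_2 Z_{21}-T_1 U_2$; distributing the two products therefore yields
\[
  r_1^2 r_2^2 B(\phi_2,\phi_1)+r_2^4 A(\phi_2,\phi_1)-r_1^2 r_2^2 B(\phi_1,\phi_2)-r_1^4 A(\phi_1,\phi_2),
\]
which is exactly the negative of the left-hand side of~\eqref{AB}. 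Hence $\det M_1=0$ is equivalent to~\eqref{AB}, and therefore to~\eqref{autunno}. In the remaining degenerate case $\sin\phi_1=\sin\phi_2=0$, both $T_1$ and $T_2$ vanish, so $x=0$ (making~\eqref{autunno} trivially true) and simultaneously the first two rows of $M_1$ are zero, forcing $\det M_1=0$.

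The main obstacle is essentially bookkeeping: to recognize $A$ and $B$ as the natural building blocks of both determinants, so that the two independent $2\times 2$ computations produce the same polynomial up to sign. The only mildly delicate point is the cancellation yielding the mixed $r_1^2 r_2^2$ coefficient, where two quartic monomials coming from off-diagonal pieces of $M^T M x_v$ must combine with two more coming from the diagonal pieces to reconstruct $B(\phi_1,\phi_2)-B(\phi_2,\phi_1)$.
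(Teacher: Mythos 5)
Your proof is correct and follows essentially the same route as the paper's: both exploit the block form of $t$ in the basis $u_1,u_2,v_1,v_2$, reduce~\eqref{autunno} to a $2\times 2$ collinearity condition in $\Span\{v_1,v_2\}$, and rely on the preceding lemma to reduce $\det M_1=0$ to the determinant~\eqref{quattrosei}. The only (minor) difference is that the paper tests collinearity via $\langle tx,ty\rangle=0$ with $y=(0,0,-r_2T_2,r_1T_1)^T$, which reproduces~\eqref{quattrosei} directly without any polynomial expansion, whereas your detour through the common polynomial~\eqref{AB} costs a little extra bookkeeping but also establishes item (3) of Theorem~\ref{MMM} along the way.
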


\begin{proof} In the basis $u_1, u_2,v_1, v_2 $ we have
\begin{equation}\label{maio}
  t=
 \begin{bmatrix}
0 & 0  & r_1^2 U_1 & r_1 r_2 Z_{12}
\\
0& 0 &  r_1 r_2Z_{21}& r_2^2 U_2
\\
-r_1^2  U_1  & -r_1 r_2 Z_{21} & 0 &0
\\
-r_1 r_2 Z_{12} & -r_2^2 U_2 &0 & 0
\end{bmatrix} =:\begin{bmatrix}
0&M\\-M^T&0
    \end{bmatrix}
    \quad\text{ and }
    x=\begin{bmatrix}
0\\0\\r_1 T_1\\ r_2 T_2
   \end{bmatrix}.
\end{equation}
By the block structure of $ t $, requiring
 $  t^2 x\in\Span\{ x\}$ is the same of requiring
 $\langle  tx ,  ty\rangle=0$, where $y= (0, 0, -r_2T_2, r_1 T_1)^T
\perp x$  in $\Span\{v_1, v_2\}$.
The calculation of $tx$ and $ty$ gives the results
\begin{equation}\label{tagliere}
\begin{aligned}
tx=\begin{bmatrix}
r_1(r_1^2T_1 U_1+  r_2^2T_2Z_{12})
\\ r_2( r_1^2   T_1 Z_{21}+r_2^2 T_2 U_2)
\\ 0 \\ 0\end{bmatrix}
\quad \text{ and } ty=
r_1 r_2 \begin{bmatrix} r_1(T_1 Z_{12}-T_2 U_1)
 \\ r_2(T_1 U_2 - T_2 Z_{21})
 \\ 0
 \\ 0
\end{bmatrix}.
\end{aligned}
\end{equation}
Requiring orthogonality between these vectors is equivalent to~\eqref{quattrosei}.
\end{proof}
\begin{remark} A calculation of the determinant~\eqref{quattrosei} shows that the equation $\det M_1=0$ can be written in the form
\begin{equation}\label{AB2}
 A(\phi_1, \phi_2)r_1^4 + \{ B(\phi_1, \phi_2)- B(\phi_2, \phi_1) \}r_1^2 r_2^2
  -A(\phi_2, \phi_1)r_2^4=0
\end{equation}
where
$
 A(\phi_1, \phi_2)=T_1 U_1 \big(T_1Z_{12}-T_2 U_1\big)$, $ B(\phi_1, \phi_2)= T_2 Z_{12} \big(T_1Z_{12} - T_2 U_1\big)$. Equation~\eqref{AB2} can be also seen as a quadratic equation in $r_1^2/r_2^2$.
 \end{remark}

\begin{remark}\label{competizione}
It is easy to see that, if $(x,t)\in \F_4$ and $\rank(t)=4$, i.e. $t$ has maximal rank, then we have
\begin{equation}\label{gara}
 t^2 x\in\Span\{x\}\qquad\Leftrightarrow\qquad
 (x,t)\in C_4.                               \end{equation}
 Indeed, if $\rank(t)=4$, then we have the equivalence $t=\la_1 u_1\wedge v_1 +\la_2 u_2\wedge v_2$ for suitable $(u_1, v_1, u_2, v_2)\in\Sigma$ and $ \la_1, \la_2>0$. If $\la_1=\la_2=:\la\neq 0$, then $(x,t)\in \Sigma_2\subset C_4$, see~\eqref{ciquattro}. We also have  $t^2 = -\la^2 I_4$ and condition $ t^2 x\in\Span\{x\}$ is obvious. Let now $0<\la_2<\la_1$. Condition $t^2 x\in\Span\{x\}$ is equivalent to claim that $x$ is an eigenvalue of \begin{equation*}
t^2=-\la_1^2 (u_1 u_1^T+v_1 v_1^T) -\la_2^2 (u_2 u_2^T+v_2 v_2^T)=-\la_1^2\pi_{\Span\{u_1, v_1\}} -\la_2^2
\pi_{\Span\{u_2, v_2\}}.                                                                                                                \end{equation*}
In other words, $x\in \Span\{u_1, v_1\}\bigcup  \Span\{u_2, v_2 \}$, which means $(x,t)\in\Sigma_1$, see again~\eqref{ciquattro}.

Finally, note that if $\rank(t)=2$, then equivalence~\eqref{gara} does not hold. See the    example $(x,t)= (e_1, e_1\wedge e_2)\in \Lie (\Span\{e_1, e_2\})$, and we have  $t^2 x\in\Span\{x\}$, but $(x,t)\notin \Cut(\F_4)$, because $\Cut(\F_4)\cap \Lie(\Span\{e_1, e_2\})= \Cut(\Lie(\Span\{e_1, e_2\}))$, by the discussion in Subsection~\ref{pino}. By Heisenberg group properties, we have $(e_1, e_1\wedge e_2)\notin \Cut(\Lie(\Span\{e_1, e_2\}))$.
\end{remark}

In~\eqref{quinto} we proved that if $\gamma(\cdot, a,b,\phi)$ is a generic extremal, then $\rank (t(s, a,b,\phi))=4$ for all $s\in \left]0,t_\cut(\gamma)\right]$. Next we prove that the same happens for large times.
\begin{lemma}\label{deboluccio}
We have the following facts.

(1) Let $\gamma(\cdot, a, b, \phi  )$  be a generic extremal.
Then, if
\begin{equation}\label{metheny}
  \phi_2>1\quad\text{ and }\quad  \phi_1 >2+\phi_2+\frac{2}{\phi_2-1}
\end{equation}
we have $\rank(t(1,a,b,\phi))=4$.

(2) For all  generic extremal $(x(s), t(s)):= \gamma (s, a, b , \phi  )$
there is
 $s^*=s^*(\phi_1,\phi_2)>0$  such that $\rank (t(s))=4$ for all $s\geq s^*$.
\end{lemma}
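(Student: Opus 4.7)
The plan is to reduce the rank condition to a single scalar inequality using the block form~\eqref{blocco}, then verify that inequality under the stated hypotheses by combining elementary pointwise bounds on $U$ and $Z$, and finally to deduce (2) from (1) by the rescaling $\gamma(s,a,b,\phi)=\gamma(1,sa,sb,s\phi)$.

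\emph{Step 1 (reduction).} By~\eqref{blocco}, the matrix $t(1,a,b,\phi)\in\so(4)$ has rank $4$ if and only if the $2\times 2$ block $M$ is invertible, i.e.
\[
\det M = r_1^2 r_2^2\bigl(U_1 U_2 - Z_{12} Z_{21}\bigr)\neq 0.
\]
From the formula~\eqref{zeta} one checks $Z(\phi_2,\phi_1)=\frac{\phi_2}{\phi_1}Z(\phi_1,\phi_2)$, so $Z_{12}Z_{21}=\frac{\phi_2}{\phi_1}Z_{12}^{\,2}\geq 0$. Since $U_1,U_2>0$ whenever $\phi_1,\phi_2>0$, it suffices to prove the strict inequality $U_1 U_2 > Z_{12}Z_{21}$.

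\emph{Step 2 (pointwise bounds).} From $|\sin\phi\cos\phi|\leq\tfrac12$ I get $U(\phi)\geq\frac{2\phi-1}{8\phi^{2}}$ for $\phi\geq\tfrac12$, and bounding the numerator of~\eqref{zeta} by $\phi_1+\phi_2$ gives $|Z(\phi_1,\phi_2)|\leq\frac{1}{2\phi_2(\phi_1-\phi_2)}$. Multiplying these estimates yields
\[
U_1U_2\geq\frac{(2\phi_1-1)(2\phi_2-1)}{64\,\phi_1^{2}\phi_2^{2}},\qquad
Z_{12}Z_{21}\leq\frac{1}{4\,\phi_1\phi_2(\phi_1-\phi_2)^{2}}.
\]
Thus $U_1U_2>Z_{12}Z_{21}$ follows from the single inequality
\begin{equation}\label{ineqkey}
\Bigl(2-\tfrac{1}{\phi_1}\Bigr)\Bigl(2-\tfrac{1}{\phi_2}\Bigr)(\phi_1-\phi_2)^{2}>16.
\end{equation}

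\emph{Step 3 (verification of \eqref{ineqkey}).} Since $\phi_1>\phi_2$, we have $2-\tfrac{1}{\phi_1}>2-\tfrac{1}{\phi_2}>0$, so it is enough to show $(2-\tfrac{1}{\phi_2})(\phi_1-\phi_2)>4$, equivalently $\phi_1-\phi_2>\frac{4\phi_2}{2\phi_2-1}$. The hypothesis rewrites as $\phi_1-\phi_2>2+\frac{2}{\phi_2-1}=\frac{2\phi_2}{\phi_2-1}$, and an easy computation shows $\frac{2\phi_2}{\phi_2-1}\geq\frac{4\phi_2}{2\phi_2-1}$ (indeed this reduces to $-2\geq -4$). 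This proves~(1).

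\emph{Step 4 (passage to arbitrary time).} By the riparametrization property $\gamma(s,a,b,\phi)=\gamma(1,sa,sb,s\phi)$, the matrix $t(s,a,b,\phi)$ has the same block form as in~\eqref{blocco} with $r_k$ replaced by $sr_k$ and $\phi_k$ replaced by $s\phi_k$. Applying part~(1) to the rescaled data, $\rank(t(s))=4$ whenever $s\phi_2>1$ and $s\phi_1>2+s\phi_2+\frac{2}{s\phi_2-1}$; rewriting the second as $s(\phi_1-\phi_2)>2+\frac{2}{s\phi_2-1}$, both conditions are satisfied for all $s\geq T(\phi_1,\phi_2)$ sufficiently large, proving~(2).

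The only subtle point is to choose the right form of the pointwise bounds in Step~2 and to notice the ``squeeze'' between the hypothesis $\phi_1-\phi_2>\frac{2\phi_2}{\phi_2-1}$ and the requirement $\phi_1-\phi_2>\frac{4\phi_2}{2\phi_2-1}$; everything else is essentially straightforward. A sharper estimate on $U$ and/or $Z$ would certainly weaken the quantitative assumptions of~(1) (and, as the authors already remark in Remark~\ref{messina} and Conjecture~\ref{gettiamo}, presumably allow one to prove the much stronger statement $\rank(t(s))=4$ for all $s>0$), but that is outside the scope of the present lemma.
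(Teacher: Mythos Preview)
Your proof is correct and follows essentially the same approach as the paper's: reduce the rank condition to $U_1U_2>Z_{12}Z_{21}$ via the block form~\eqref{blocco}, establish crude pointwise bounds on $U$ and $Z$, and then verify the resulting elementary inequality under the stated hypotheses; part~(2) follows by rescaling. The only cosmetic differences are that you work directly with $U$ and $Z$ (using in addition the identity $Z_{21}=\tfrac{\phi_2}{\phi_1}Z_{12}$) and use the marginally sharper bound $|\sin\phi\cos\phi|\le\tfrac12$, whereas the paper first clears denominators to the equivalent form~\eqref{palermo} and uses $|\sin\phi\cos\phi|\le 1$; your intermediate sufficient condition $\phi_1-\phi_2>\tfrac{4\phi_2}{2\phi_2-1}$ is thus slightly weaker than the hypothesis, which you then simply observe is implied by it.
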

Note that the constant $s^*$ in (2) depends on $\phi_1$ and $\phi_2$ only, not on $a, b $.
\begin{proof}
We prove part (1). Write $\gamma(1,a,b,\phi)=\Gamma(r,u,v,\phi)$ as in~\eqref{arezzo}.
Keeping~\eqref{maio} into account, we must prove the inequality  $\det \begin{bmatrix}
 r_1^2 U_1 & r_1 r_2Z_{12} \\ r_1 r_2 Z_{21} & r_2^2 U_2
 \end{bmatrix} \neq 0
$, which means
\begin{equation}\label{bari}
 U_1U_2-Z_{12}Z_{21}\neq 0,
 \end{equation}   for all $0<\phi_2<\phi_1$ satisfying ~\eqref{metheny}.
 Equivalently,  \begin{equation}\label{palermo}
  (\phi_1^2-\phi_2^2)^2 \big(\phi_1-c_1s_1\big)\big(\phi_2-c_2s_2\big)- 4\phi_1\phi_2
  \big( \phi_2 c_2 s_1-\phi_1 c_1 s_2\big)^2\neq 0.
   \end{equation}

We claim that the left-hand side of~\eqref{palermo} is positive for all $(\phi_1,\phi_2)$ satisfying~\eqref{metheny}. Observe the trivial bounds
\begin{equation*}
\begin{aligned}
 & (\phi_1^2-\phi_2^2)^2 \big(\phi_1-c_1s_1\big)\big(\phi_2-c_2s_2\big)
  \geq   (\phi_1 +\phi_2 )^2(\phi_1-\phi_2)^2  \big(\phi_1-1 \big)\big(\phi_2-1 \big)
\\&
4\phi_1\phi_2
  \big( \phi_2 c_2 s_1-\phi_1 c_1 s_2\big)^2 <
  4\phi_1\phi_2(\phi_1+\phi_2)^2.
\end{aligned}
   \end{equation*}
    Thus, the required inequality is ensured by $(\phi_1-\phi_2)^2
   >\frac{4\phi_1\phi_2}{(\phi_1-1)(\phi_2-1)}$.
The monotonicity
inequality $\frac{\phi_1}{\phi_1-1}<\frac{\phi_2}{\phi_2-1}$ for $1<\phi_2<\phi_1$ shows that~\eqref{palermo} holds as soon as we have $(\phi_1-\phi_2)^2 >4\frac{\phi_2^2}{(\phi_2-1)^2 }$.   Taking the square root,  this turns out to be equivalent to the second condition in~\eqref{metheny}.

 Proof of (2). By~\eqref{tagliola} and~\eqref{calamo} we have
\begin{equation*}
t(s)=  U(\phi_1 s) \a_1^s \wedge b_1^s
 + Z(\phi_1 s,\phi_2 s)\a_1^s\wedge \b_2^s
  + Z(\phi_2 s,\phi_1 s) \a_2^s\wedge \b_1^s
+  U(\phi_2 s)\a_2^s\wedge \b_2^s.
\end{equation*}
Then $t(s)$ has rank $4$ if and only if $U(\phi_1 s)U(\phi_2 s)- Z(\phi_1 s, \phi_2 s)Z(\phi_2 s , \phi_1 s)\neq 0$. By (1), this holds true provided that
\begin{equation*}
 \phi_2 s>1\quad\text{ and }\quad \phi_1 s> 2+\phi_2 s + \frac{2}{\phi_2 s -1},
\end{equation*}
and, since $0<\phi_2<\phi_1$, there is $  s^*>0$ depending on $\phi_1$ and $\phi_2$  such that both inequalities hold for all $s\geq   s^*$.
\end{proof}

\begin{corollary}\label{toyo}
If  $r_1, r_2>0$ and $0<\phi_2<\phi_1$, then, given $\gamma(\cdot, a,b,\phi)$, there is     $s^*(\gamma) \geq  t_\cut(\gamma)$ such that for any $s\in \left]0, t_\cut(\gamma)\right]\cup \left[ s^*(\gamma), +\infty\right[$  the equivalent conditions~\eqref{primavera} and~\eqref{autunno} are also equivalent to the fact that $(x,t)\in\Sigma_1\cup\Sigma_2$, where
  $\Sigma_1\cup\Sigma_2\subset C_4$,
   the conjectured cut locus, see \cite{RizziSerres16}. (The sets $\Sigma_1, \Sigma_2$ are defined in~\eqref{ciquattro}).
\end{corollary}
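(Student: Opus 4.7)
The plan is to combine Theorem~\ref{invernizzi}, Remark~\ref{competizione}, Proposition~\ref{duuu}, and Lemma~\ref{deboluccio}(2). The first two statements already do the heavy lifting: by Theorem~\ref{invernizzi}, conditions \eqref{primavera} and \eqref{autunno} are equivalent for any extremal point $(x,t)=\Gamma(u,v,r,\phi)$, and Remark~\ref{competizione} shows that at a point $(x,t)\in\F_4$ with $\rank(t)=4$ the condition $t^2 x\in\Span\{x\}$ is equivalent to $(x,t)\in C_4=\Sigma_1\cup\Sigma_2\cup\Sigma_3$. Since $\Sigma_3$ consists of points whose bivector component $t$ has rank $2$, the hypothesis $\rank(t)=4$ automatically rules out membership in $\Sigma_3$, so in that case the membership $(x,t)\in C_4$ is the same as $(x,t)\in\Sigma_1\cup\Sigma_2$.

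The remaining task is therefore to identify the set of times $s>0$ at which $\rank(t(s))=4$. For small times, Proposition~\ref{duuu} already does the job: it asserts $(x(s),t(s))\notin\mathcal{H}$ for all $s\in\left]0,t_\cut(\gamma)\right]$, and by the definition of $\mathcal{H}$ this forces $\rank(t(s))>2$. Since the rank of a $4\times 4$ skew-symmetric matrix is always even and at most $4$, we must in fact have $\rank(t(s))=4$ on this interval. For large times, Lemma~\ref{deboluccio}(2) produces a constant $T_1=T_1(\phi_1,\phi_2)>0$, depending only on $\phi_1,\phi_2$, with the property that $\rank(t(s))=4$ for every $s\geq T_1$.

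To close the argument I would set $T(\gamma):=\max(t_\cut(\gamma),T_1)$, which clearly satisfies $T(\gamma)\geq t_\cut(\gamma)$ and guarantees that the set $\left]0,t_\cut(\gamma)\right]\cup\left[T(\gamma),+\infty\right[$ lies entirely inside the region where $\rank(t(s))=4$. Chaining the equivalences \eqref{primavera}$\Leftrightarrow$\eqref{autunno}$\Leftrightarrow (x,t)\in C_4\Leftrightarrow (x,t)\in\Sigma_1\cup\Sigma_2$ on this set then gives the statement of the corollary.

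I do not expect any real obstacle: the corollary is essentially a packaging of previously established results. The only subtle point — and the reason why one cannot drop the restriction to the above time set — is that on the complementary interval $\left]t_\cut(\gamma),T(\gamma)\right[$ the rank of $t(s)$ could a priori drop to $2$, and in that case the implication from $t^2 x\in\Span\{x\}$ to $(x,t)\in\Sigma_1\cup\Sigma_2$ can genuinely fail, as the example $(e_1,e_1\wedge e_2)$ discussed in Remark~\ref{competizione} shows. Resolving what happens on that intermediate window is precisely the content of Conjecture~\ref{gettiamo}, which lies outside the scope of the present corollary.
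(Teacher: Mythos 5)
Your proof is correct and follows essentially the same route as the paper, whose own proof is the one-liner ``put Remark~\ref{competizione} and Lemma~\ref{deboluccio} together'' (with Proposition~\ref{duuu}, i.e.~\eqref{quinto}, invoked implicitly for the interval $\left]0,t_\cut(\gamma)\right]$, exactly as you do). Your write-up merely makes explicit the parity argument for the rank of a skew-symmetric matrix and the exclusion of $\Sigma_3$, both of which the paper leaves tacit.
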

\begin{proof}
 Just put Remark~\ref{competizione} and Lemma~\ref{deboluccio} together.
\end{proof}

 \begin{remark}
 \label{messina}
We conjecture that inequality $U_1 U_2 -Z_{12}Z_{21}>0$ holds for all $0<\phi_2<\phi_1$. The inequality implies that the matrix $t$ in~\eqref{maio} has full rank.
As a consequence, Corollary~\ref{toyo}, holds for all $s\in\left]0,+\infty\right[$. See also the discussion in Conjecture~\ref{gettiamo}.
\end{remark}
\begin{remark}\label{Taylor}
Next we briefly show that  the inequality $U_1 U_2 -Z_{12}Z_{21}>0$ mentioned above holds for  points close to the origin. By elementary trigonometry it is easy to check that
\begin{equation}\label{Prostaferesi}
yZ(x,y)= \frac14 \left(T(x-y)-T(x+y)\right)= xZ(y,x).
\end{equation}
We also have
$
U(x)=\frac{x-\sin x\cos x}{4x^2}= \frac{1-T(2x)}{4x}
$. Then,
\[
\begin{aligned}
U(x)U(y) - Z(x,y)Z(y,x)&= \frac{(1-T(2x))(1-T(2y))-(T(x+y)-T(x-y))^2}{16xy}.
\end{aligned}\]
After a routine calculation based on Taylor's formula, one can show that
\begin{equation*}
\begin{aligned}
   U(x)U(y) - Z(x,y)Z(y,x)
=\frac{1}{3!5!}\frac 4{3\cdot 5 \cdot 7 }xy(x^2-y^2)^2+o(|(x,y)|^7) ,                                                                     \end{aligned}                                                                                                                                                           \end{equation*}
as $(x,y)\to (0,0)$. The first term is  positive, but not uniformly for  $0<y<x $   close to     $(0,0)$. In order to make the estimate uniform, we can
work for example on the set $\{(x,y): 0<y<bx\}$ for some $b<1$.

\end{remark}

\begin{remark} [Degeneration to the rank-3  case] Let us consider the generic extremal point $(x,t)= (r_1 T_1  v_1+r_2 t_2v_2,
r_1^2 U_1 u_1\wedge v_1 + r_1 r_2 Z_{12} u_1\wedge v_2 + r_1 r_2 Z_{21} u_2\wedge v_1 + r_2^2 U_2 u_2\wedge v_2)
$. Letting $\phi_2= 0$ we have the degenerations $T_2= T(0)=1$, $Z_{12}=Z(\phi_1,0)= V(\phi_1 )=V_1 $,  $Z(\phi_2, \phi_1)|_{\phi_2=0}= 0$, $U(\phi_2 )= U(0)=0$. Therefore, we get the extremal point
\begin{equation*}
\begin{aligned}
 (x,t)= (r_1 T_1v_1 + r_2 v_2, r_1^2 U_1 u_1\wedge v_1 + r_1 r_2 V_1 u_1\wedge v_2)
 =(T_1\b_1+ \b_2, \a_1\wedge (U_1\b_1+V_1\b_2)).
\end{aligned}
\end{equation*}
This is the general form of points  in $\Lie (u_1,v_1, v_2)\simeq \F_3$. Compare the function $G(\a,\b,\zeta,\phi)$ in \cite[Remark~2.3]{MontanariMorbidelli17}.
After some calculations, one can see that the matrix~\eqref{dieci} in the rank-3  case becomes
\small
\begin{equation}\label{undici}
  \begin{aligned}
\arraycolsep=1.4pt\def\arraystretch{1.4}
    \begin{NiceArray}{l \left[cccccc\right]}[first-row]
     & D_{\circlearrowleft u_1 v_1} &  D_{\circlearrowleft u_1 v_2} &
  D_{\circlearrowleft v_1 v_2}  &  \partial /\partial_{r_1}   & \partial  /\partial_{r_2} & \p/\p {\phi_1}
\\
\pi_{ \a_1}   &  -T_1 &   -r_2^2   & 0 & 0 & 0 & 0
\\
\pi_{\b_1}   & 0&   0 &  -r_2^2  & T_1 & 0 & -2V_1
\\
\pi_{ \b_2 } &  0&0&  r_1^2 T_1 & 0 & 1 & 0
 \\
 \pi_{{\a_1\wedge \b_1 }}  &  0&0 & -r_2^2 V_1 &  2U_1 & 0  &  \frac{\cos \phi_1}{\phi_1} V_1
\\
\pi_{\a_1\wedge  \b_2  }\quad  & 0&0& r_1^2 U_1 & V_1 & V_1 & V_1'
\\
\pi_{\b_1\wedge \b_2\hspace{.5em}} &
V_1&  - r_1^2   U_1  &
0 & 0 & 0 & 0    \end{NiceArray}
\end{aligned},
\end{equation}\normalsize
which is singular if and only if at least one among the two matrices below is singular
\begin{equation}
   N_1=
   \left[
\begin{smallmatrix}
 -T_1 & -r_2^2
 \\ V_1 & -r_1^2 U_1
\end{smallmatrix}
   \right],
\quad\text{ or }\quad N_2:=
\left[
\begin{smallmatrix}
-r_2^2 & T_1 & 0 & -2 V_1
\\ \\
r_1^2 T_1 & 0 & 1 & 0
\\ \\
- r_2^2 V_1 & 2U_1 & 0 &\frac{\cos\phi_1}{\phi_1}V_1
\\ \\ r_1^2 U_1 & V_1 & V_1 & V_1'
\end{smallmatrix}
\right].
\end{equation}
Note that the requirement $\det N_1=0$ becomes $\frac{r_2^2}{r_1^2}=- \frac{T_1U_1 }{V_1}= Q(\phi_1)$, where $Q(\phi_1)$ is the function appearing in \cite[Theorem~4.1]{MontanariMorbidelli17}. In that case, points where $\det N_1=0$ are  the points   of cut locus. Points where $\det N_2=0$ are conjugate points which likely may not belong to the cut locus. The same splitting of the critical set appears in~\cite[equation~(12)]{Myasnichenko02}. Zeros of the factor $e_1(\tau) \cos^2\phi + e_2(\tau)$ correspond to zeros of $\det N_1$ and detect cut points. Zeros of the factor $e_3(\t)\cos^2\phi + e_4(\t)$ correspond to zeros of $\det N_2$.
\end{remark}

 \section{Upper      estimates of the cut time and    Proof of Theorem~\ref{sarebbero} }\label{prima}
In this section   we give the proof of Theorem~\ref{sarebbero}. If $\phi_1$ and $\phi_2$ are rationally dependent, we also get an upper estimate of the cut time.  The proof of Theorem~\ref{sarebbero} requires more work in the rationally independent case.

In the present section, given the extremal $u(s)= \sum_{k=1}^2  a_k\cos(2\phi_k s) + b_k\sin(2\phi_k s) $, let us write~$\gamma(s)$ by formula~\eqref{calamo}. Define for  all $s $ the orthonormal vectors
 $ u_k^s:= \frac{ \a_k^s}{ sr_k} $ and $ v_k^s= \frac{\b_k^s}{sr_k} $, where we refer to~\eqref{tagliola}.
Under this notation, we have
 \begin{equation}\label{ravenna}
\begin{aligned}
 x(s, a,b,\phi)
 &= s  r_1T(\phi_1 s)v_1^s
 +sr_2 T(\phi_2 s)v_2^s
 \\
 t(s, a,b,\phi)
& =s^2 \Big( r_1^2 U(\phi_1 s) u_1^s\wedge v_1^s
 + r_1 r_2 Z(\phi_1 s,\phi_2 s)u_1^s
\wedge v_2^s
\\& \qquad\qquad + r_1 r_2 Z(\phi_2 s,\phi_1 s)u_2^s\
\wedge v_1^s +r_2^2 U(\phi_2 s)u_2^s\wedge v_2^s\Big).
 \end{aligned}
\end{equation}

In the rationally dependent case,
 we have the following easy result, partially due to Brockett.
 \begin{proposition}[Extremals with rationally dependent parameters $\phi_1$ and $\phi_2$]
 \label{brocco}
The following statements hold true.
 \begin{enumerate} \item \label{brocchetto}
 Given $(0, t)= (0, t_1 x_1\wedge y_1+ t_2 x_2\wedge y_2)$ with $t_1\geq t_2>0$ and $ x_1, y_1, x_2, y_2  $ orthonormal family in~$\R^4$, then all minimizers reaching $(0, t)$ have the form $\gamma(\cdot , a,b, \phi)$ with $\phi_2 =2\phi_1$ or $\phi_1=2\phi_2$. Moreover,
   for all $t_1\geq t_2>0$
 we have
\begin{equation}\label{bottiglia}
\begin{aligned}
 d((0,0), t_1 x_1\wedge y_1+ t_2 x_2\wedge y_2) & = \sqrt{4\pi t_1  + 8\pi t_2 }
 \\  &= \sqrt{4\pi\max\{|t_1|, |t_2|\}+ 8\pi\min\{ |t_1|, |t_2|\}\;}.
\end{aligned}
\end{equation}
 \item
Let $0<\phi_2<\phi_1$, where $\phi_1$ and $\phi_2$ are rationally dependent. Take  $r_1, r_2>0$ and consider the extremal $\gamma(\cdot, a,b,\phi)$, where $r_k=|a_k|=|b_k|>0$ for $k=1,2$. Then:
\begin{enumerate}
\item \label{ahi}  If $\phi_1=2\phi_2$ and $r_2^2\geq  \frac{r_1^2}{2}$, then we have $t_\cut(\gamma)=\frac{\pi}{\phi_2}$ and $\gamma(t_\cut)
=\big(0,
\frac{\pi}{8\phi_2^2}a_1\wedge b_1 +\frac{\pi}{4\phi_2^2}a_2\wedge b_2
\big)$.


\item \label{b} If $\phi_1=2\phi_2$ and $r_2^2<\frac{r_1^2}{2}$, then we have $t_\cut(\gamma)\lneqq \frac{\pi}{\phi_2}$

\item \label{c} If $\frac{\phi_1}{\phi_2}=\frac{p}{q}\in\Q\cap\left]1,+\infty\right[\setminus\{ 2\}$, then assuming that $p$ and $q$ do not have common divisors, we have $t_\cut(\gamma) \lneqq \frac{\pi q}{\phi_2} =\bar s(\frac{\phi_1}{\phi_2}):=\min\{s>0:s\phi_1=s\phi_2=0\; (\operatorname{mod}\pi)\}$.
\end{enumerate}
\end{enumerate}
 \end{proposition}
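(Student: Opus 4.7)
\textbf{Proof plan for Proposition~\ref{brocco}.} The argument rests on a single structural observation. If $\gamma(\cdot,a,b,\phi)$ is a generic extremal (so $\phi_1>\phi_2>0$ and $r_1,r_2>0$), then~\eqref{ravenna} gives $x(T)=0$ iff $\sin(\phi_1 T)=\sin(\phi_2 T)=0$, i.e.\ $\phi_1 T=k\pi$ and $\phi_2 T=m\pi$ for positive integers $k>m$. At such times $Z(k\pi,m\pi)=Z(m\pi,k\pi)=0$ by~\eqref{zeta}, so
\[
\gamma(T)=\Big(0,\;\tfrac{k\pi}{4\phi_1^{2}}\,a_{1}\wedge b_{1}+\tfrac{m\pi}{4\phi_2^{2}}\,a_{2}\wedge b_{2}\Big),
\]
whose $\so(4)$-eigenvalues are $\pm i\mu_k$ with $\mu_1=k\pi r_1^2/(4\phi_1^2)$ and $\mu_2=m\pi r_2^2/(4\phi_2^2)$, while $\length(\gamma|_{[0,T]})=T\sqrt{r_1^2+r_2^2}$.

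\textbf{Part (1).} Every non-generic extremal of the form~\eqref{messico} is contained in some strict Carnot subgroup $\Lie(V)$ of dimension $\le 3$, hence its endpoint lies in $\mathcal{H}=\{\rank(t)\le 2\}$. Thus any minimizer reaching $(0,t)$ with $\rank(t)=4$ must be generic, hit $x=0$, and therefore take the form above. Matching the unordered pair $\{\mu_1,\mu_2\}$ with $\{t_1,t_2\}$ yields two options with length-squares
\[
L_{I}^{2}=4\pi(kt_{1}+mt_{2}),\qquad L_{II}^{2}=4\pi(mt_{1}+kt_{2}).
\]
Since $k>m$ and $t_1\ge t_2$, we have $L_{II}\le L_{I}$, and $L_{II}^2$ is strictly minimized over integers $k>m\ge 1$ at $(k,m)=(2,1)$, giving $d((0,0),(0,t))^2=4\pi t_1+8\pi t_2$ and $\phi_1/\phi_2=k/m=2$ (up to the symmetric swap of indices).

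\textbf{Part (2).} For (a), substituting $\phi_1=2\phi_2$ and $s=\pi/\phi_2$ (so $k=2,\ m=1$) in~\eqref{ravenna} yields $\gamma(\pi/\phi_2)=(0,\tfrac{\pi}{8\phi_2^{2}}a_1\wedge b_1+\tfrac{\pi}{4\phi_2^{2}}a_2\wedge b_2)$. Under $r_2^2\ge r_1^2/2$ the $a_2\wedge b_2$-eigenvalue is the larger, so by Part~(1) the distance to this endpoint equals $\pi\sqrt{r_1^2+r_2^2}/\phi_2=\length(\gamma|_{[0,\pi/\phi_2]})$, giving $t_{\cut}(\gamma)\ge\pi/\phi_2$. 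For the reverse inequality, orthogonal invariance produces a one-parameter family of distinct minimizers $\bar R\gamma$ with $R\in SO(2)$ rotating $\Span\{a_1,b_1\}$ (such $R$ fix the endpoint but alter the control), so $\gamma(\pi/\phi_2)$ is a cut point, hence $t_\cut(\gamma)\le \pi/\phi_2$. For (b), the same substitution with $r_2^2<r_1^2/2$ makes $\pi r_1^2/(8\phi_2^2)$ the larger eigenvalue; Part~(1) then gives distance-squared $\pi^2(r_1^2/2+2r_2^2)/\phi_2^2$, strictly smaller than $\length(\gamma|_{[0,\pi/\phi_2]})^2=\pi^2(r_1^2+r_2^2)/\phi_2^2$ (difference $\pi^2(r_1^2/2-r_2^2)/\phi_2^2>0$), so $\gamma$ has already failed to minimize at $\pi/\phi_2$. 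For (c), at $s=\pi q/\phi_2$ the extremal reaches a full-rank vertical point but has $\phi_1/\phi_2=p/q\ne 2$, so Part~(1) immediately forbids $\gamma|_{[0,\pi q/\phi_2]}$ from being a minimizer.

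\textbf{Main obstacle.} The only non-routine step is Part~(1), where the reduction to generic extremals (via Propositions~\ref{trentadue} and~\ref{giochino}) combined with the integer optimization over $(k,m)$ pins down simultaneously the distance formula and the ratio $\phi_1=2\phi_2$; the remaining parts follow by direct substitution in~\eqref{ravenna} and continuity (which upgrades ``not minimizing at $T_0$'' to the strict inequality $t_\cut<T_0$).
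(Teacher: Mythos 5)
Your proof is correct and follows essentially the same route as the paper: reduce to generic extremals hitting the vertical set at times where $\sin(\phi_1 T)=\sin(\phi_2 T)=0$, optimize the length $4\pi(mt_1+kt_2)$ over integer pairs $k>m\ge 1$ to get $(k,m)=(2,1)$ and the distance formula, then deduce (2a)--(2c) by direct substitution and comparison with~\eqref{bottiglia}. You even make explicit two points the paper leaves implicit (why a minimizer to a rank-4 vertical point must be generic, and the upper bound $t_\cut\le\pi/\phi_2$ in (2a) via the one-parameter family of rotated minimizers), but the overall argument is the same.
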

 In cases \eqref{b} and \eqref{c} the curve $\gamma$ reaches its cut-time before touching the vertical set $\{0\}\times \Lambda^2\R^4$.
\begin{proof}
Part~\eqref{brocchetto} is essentially contained in \cite{Brockett}. Let us recapitulate the  proof.  Let $(0, t )
 =(0, t_1 x_1\wedge y_1+ t_2 x_2\wedge y_2)$, where $(x_1, y_1, x_2, y_2)\in\Sigma$ and witout loss of generality we assume that $t_1\geq t_2>0$.
By reparametrization invariance, we may search for the shorter among all $\gamma(\cdot, a,b,\phi)$ such that $\gamma(1, a, b, \phi)=(0, t)$. Length here is  $\int_0^1|u(s)| ds=\sqrt{r_1^2+ r_2^2}$, with $r_k=|a_k|=|b_k|$.
This gives
\begin{equation*}
\left\{\begin{aligned}
 &r_1 T_1 v_1+ r_2 T_2 v_2=0
\\& r_1^2 U_1 u_1\wedge v_1 +r_1 r_2 Z_{12} u_1\wedge v_2 + r_1r_2Z_{21}u_2\wedge v_1 + U_2 u_2\wedge v_2 = t_1 x_1\wedge y_1+t_2 x_2\wedge y_2.
\end{aligned}
\right.
\end{equation*}
The first line implies that $\phi_1=\phi_2=0$ (mod $\pi$), i.e. $\phi_1=n_1\pi$ and $\phi_2= n_2\pi$, where $n_1> n_2\in\N$, if we consider as usual $\phi_2\leq \phi_1$. We must exclude $n_1=n_2$ because $\rank(t)=4$. By properties of the functions $U$ and $Z$ we obtain
\begin{equation}\label{pekk}
 \frac{r_1^2}{4n_1 \pi}u_1\wedge v_1 + \frac{r_2^2 }{4n_2\pi}u_2\wedge v_2 = t_1 x_1\wedge y_1 + t_2x_2\wedge y_2.
\end{equation}
We have then to minimize $\sqrt{r_1^2+r_2^2}$ under the  constraint given by equality~\eqref{pekk}. We are working with $n_1>n_2$.  It is easy to see that
the optimal choice if given by $n_1=2$, $n_2=1$,  $u_1\wedge v_1=x_2\wedge y_2$ and $u_2\wedge v_2= x_1\wedge y_1$.
As a consequence $r_1^2=8\pi t_2$ and $r_2^2=4\pi t_1$ and formula~\eqref{bottiglia} follows.  The proof of (1) is complete.
Note that, since $t_1\geq t_2$, we have $\frac{r_1^2}{2}\leq r_2^2$.

Next we prove ~\eqref{ahi}. Let $\phi_1=2\phi_2$ and calculate by~\eqref{ravenna} the point $\gamma(\bar s, a,b, 2\phi_2, \phi_2)$ letting $\bar s=\frac{\pi}{\phi_2}$.
\begin{equation}\label{biforcazione}
\begin{aligned}
 \gamma\Big(\frac{\pi}{\phi_2},a,b, 2\phi_2, \phi_2\Big) &=\Big(0, \frac{\pi^2}{\phi_2^2}\Big( \frac{r_1^2}{8\pi} u_1^{\bar s}\wedge v_1^{\bar s}
 +\frac{r_2^2}{4\pi} u_2^{\bar s}\wedge v_2^{\bar s}\Big)\Big)
 \\&=
 \Big(0, \frac{\pi r_1^2}{8\phi_2^2 }u_1^{\bar s}\wedge v_1^{\bar s} +
 \frac{\pi r_2^2}{4\phi_2^2} u_2^{\bar s}\wedge v_2^{\bar s}\Big)
\\& =\Big(0, \frac{\pi}{8\phi_2^2}a_1\wedge b_1 +\frac{\pi}{4\phi_2^2}a_2\wedge b_2\Big),
\end{aligned}
\end{equation}
by identity $r_k^2 u_k^s\wedge v_k^s = a_k\wedge b_k$ for all $s>0$.
  Since $r_2^2\geq \frac{r_1^2}{2}$ , \eqref{bottiglia} gives that the distance of such point from the origin is
$ \sqrt{4\pi\frac{\pi r_2^2}{4\phi_2^2}+ 8\pi\frac{\pi r_1^2}{8\phi_2^2}}=\frac{\pi}{\phi_2}\sqrt{r_1^2+ r_2^2\,}$.
This agrees with   $\length \big(\gamma|_{[0, \frac{\pi}{\phi_2}]}\big)=\int_0^{\pi/\phi_2}\sqrt{r_1^2+ r_2^2}ds$. Then $\gamma$ minimizes on $[0,\pi/\phi_2]$ and,   since $(0,t)\in C_4$ for all $t$,  we conclude that $t_\cut(\gamma)=\frac{\pi}{\phi_2}$.

We pass to the proof of \eqref{b}. We get~\eqref{biforcazione}, as in the previous case. However, here we have $r_2^2<   \frac{r_1^2}{2}$. Thus~\eqref{bottiglia} gives that the distance is
\begin{equation*}
 \sqrt{4\pi\frac{\pi r_1^2}{8\phi_2^2}
 + 8\pi \frac{\pi r_2^2}{4\phi_2^2}}=\frac{\pi}{\phi_2}\sqrt{\frac{r_1^2}{2}+ 2r_2^2\,}\lneqq \frac{\pi}{\phi_2}\sqrt{r_1^2+ r_2^2}=\length(\gamma|_{[0, \pi/\phi_2]}).
\end{equation*}
Thus $\gamma$ is not a minimizer on $[0,\pi/\phi_2]$.

Finally we show \eqref{c}. Let $\gamma\big(\cdot, a,b,\frac{p}{q} \phi_2, \phi_2\big)$, where $p,q\in\N$ and $p>q$, $p\neq 2q$ and assume $p,q$ do not have common divisors. By~\eqref{ravenna} we have
\begin{equation*}
x(s):= x\Big(s,a,b,\frac{p}{q} \phi_2, \phi_2\Big) = sr_1 T\Big(\frac{p}{q}\phi_2s\Big)  v_1^s +sr_2 T(\phi_2 s)   v_2^s.
\end{equation*}
The smallest $s>0$ such that $x(s)=0$ is $s=\frac{q\pi}{\phi_2}$. We also have $x(s)=0$ for all $s=k\frac{q\pi}{\phi_2}$ with $k\in\N$. Furthermore,  by part~\eqref{brocchetto} of the theorem,  $\gamma$  does   not minimize length on $[0, \frac{q\pi}{\phi_2}]$. Then, we have
the upper estimate $t_\cut\lneqq \frac{q\pi}{\phi_2}$.
\end{proof}

Let us pass to the analysis of extremal controls with rationally independent $\phi_1$ and $\phi_2$.
\begin{theorem}\label{sarebbe}
 Let $u(s)= \sum_{k=1}^2  a_k\cos(2\phi_k s) + b_k\sin(2\phi_k s)$ be an admissible control. Assume  also that $r_1, r_2>0$  and $\frac{\phi_1}{\phi_2}> 1$ is irrational.
 Consider the corresponding trajectory $\gamma(s,a ,b , \phi )$. Then there is a sequence $s_j\to +\infty$ such that
  $\gamma(s_j)$ is conjugate and  belongs to   $    \Sigma_1\cup\Sigma_2   \subset  C_4$ for all $j\in\N$, see  formula~\eqref{ciquattro}.
\end{theorem}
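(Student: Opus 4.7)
The plan is to reduce the statement to finding infinitely many zeros of a scalar smooth function, then analyze its large-$s$ asymptotics. By the reparametrization identity $\gamma(s, a, b, \phi) = \gamma(1, sa, sb, s\phi)$, applying Theorem~\ref{MMM} (with the corrected form~\eqref{AB2}) at time $1$ to the extremal with parameters $(sr_1, sr_2, s\phi_1, s\phi_2)$ and factoring out $s^4$, the condition $t(s)^2 x(s) \in \Span\{x(s)\}$ is equivalent to
\[
F(s) := A(s\phi_1, s\phi_2)\, r_1^4 + \bigl\{B(s\phi_1, s\phi_2) - B(s\phi_2, s\phi_1)\bigr\}\, r_1^2 r_2^2 - A(s\phi_2, s\phi_1)\, r_2^4 = 0.
\]
By Lemma~\ref{deboluccio} and Corollary~\ref{toyo}, for all sufficiently large $s$ any zero of $F$ forces $\gamma(s) \in \Sigma_1 \cup \Sigma_2 \subset C_4$. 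So it suffices to produce a sequence $s_j \to +\infty$ with $F(s_j) = 0$.

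Next I would compute the leading asymptotics of $F$ as $s \to \infty$. Using $T(\phi) = \sin\phi/\phi = O(1/\phi)$, $U(\phi) = 1/(4\phi) - \sin(2\phi)/(8\phi^2) + O(1/\phi^3)$, and $Z(\phi_1, \phi_2) = O(1/(\phi_1\phi_2))$, the dominant contribution to $A(\phi_1, \phi_2) = T_1^2 U_1 Z_{12} - T_1 T_2 U_1^2$ is $-T_1 T_2 U_1^2 \sim -\sin\phi_1 \sin\phi_2/(16\phi_1^3 \phi_2)$, while $B = O(1/\phi^5)$ is of strictly lower order. Substituting $\phi_k \mapsto s\phi_k$ yields
\[
s^4 F(s) \;=\; \frac{\sin(s\phi_1)\, \sin(s\phi_2)}{16\, \phi_1 \phi_2}\left(\frac{r_2^4}{\phi_2^2} - \frac{r_1^4}{\phi_1^2}\right) + O(1/s).
\]
In the non-degenerate case $r_1^4/\phi_1^2 \neq r_2^4/\phi_2^2$ the bracket is a fixed nonzero constant, and since $\phi_1/\phi_2 \notin \Q$, Weyl's equidistribution theorem makes $(s\phi_1, s\phi_2) \bmod 2\pi$ uniformly distributed on the $2$-torus; in particular $\sin(s\phi_1)\sin(s\phi_2)$ attains values $\geq 1/4$ and $\leq -1/4$ on arbitrarily large $s$. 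Continuity of $F$, together with the vanishing remainder, then produces the required zero sequence by the intermediate value theorem.

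The main obstacle is the \emph{degenerate} case $r_1^2/\phi_1 = r_2^2/\phi_2$, in which the leading $1/s^4$ coefficient collapses and one must descend to the next order. The plan is to retain the second-order terms in each factor: the correction $-\sin(2\phi_1)/(16\phi_1^3)$ in $U_1^2$, the full contribution $T_1^2 U_1 Z_{12}$, and the leading $B$ term. This should give an asymptotic of the form $s^5 F(s) = P(s\phi_1, s\phi_2) + O(1/s)$, where $P$ is an explicit trigonometric polynomial in two variables with frequencies in $\{\phi_1, \phi_2, 2\phi_1, 2\phi_2, \phi_1\pm \phi_2, 3\phi_1\pm\phi_2,\ldots\}$. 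The hardest step is proving $P \not\equiv 0$ under the constraint $r_1^2\phi_2 = r_2^2 \phi_1$; it is enough to exhibit a single nonzero Fourier mode, and the natural candidate is the highest-frequency mode $3\phi_1 + \phi_2$ arising (via product-to-sum) from $\sin(2s\phi_1)\sin(s\phi_1)\sin(s\phi_2)$ in the $U_1^2$ correction, which cannot be produced by any other term in the expansion. Once $P \not\equiv 0$, Weyl equidistribution and the intermediate value theorem again yield a zero sequence $s_j \to +\infty$, giving the theorem (and Corollary~\ref{finitezza}, since $s_1 > t_\cut(\gamma)$).
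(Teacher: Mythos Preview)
Your reduction to the scalar equation $F(s)=0$ via Lemma~\ref{aab}, the appeal to Lemma~\ref{deboluccio}/Corollary~\ref{toyo} for large $s$, and the leading-order asymptotic in the non-degenerate case $r_1^4/\phi_1^2\neq r_2^4/\phi_2^2$ all match the paper's proof essentially verbatim. The sign-change argument there via irrationality of $\phi_1/\phi_2$ is also the same.

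The degenerate case $r_1^2/\phi_1=r_2^2/\phi_2$ is where you diverge from the paper, and where your argument has a genuine gap. You propose to compute the full next-order trigonometric polynomial $P$ and detect a nonzero Fourier mode at frequency $3\phi_1+\phi_2$. But you have not verified that this mode survives: at order $1/s^5$ several terms contribute (the $U_1^2$ correction, the $T_1^2U_1Z_{12}$ term, the $B$ terms, and the symmetric pieces after exchanging indices), and each of these produces modes at $3\phi_1\pm\phi_2$ and $\phi_1\pm 3\phi_2$ through product-to-sum. Tracking the coefficient of a single mode through all of them, under the constraint $r_1^2\phi_2=r_2^2\phi_1$, is delicate and you have not done it. Moreover, even granting $P\not\equiv0$, you still need $P$ to change sign on the torus; this follows because $P$ has zero mean (every term is odd in at least one angular variable), but you did not argue this.

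The paper sidesteps all of this. Instead of analyzing $P$ in full, it evaluates $F$ on the special sequence $s_n=n\pi/\phi_2$, where $\sin(\phi_2 s_n)=0$ kills most terms. After the constraint $r_2^2=(\phi_2/\phi_1)r_1^2$ is imposed, the surviving $1/s^5$ coefficient collapses to a single positive constant times $\cos(\phi_2 s_n)\sin^3(\phi_1 s_n)=(-1)^n\sin^3(n\pi\phi_1/\phi_2)$, and irrationality of $\phi_1/\phi_2$ makes this take values near $\pm1$ along subsequences. This is both shorter and avoids any Fourier bookkeeping; I recommend you adopt it for the degenerate case.
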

As a trivial consequence we have the following corollary.
\begin{corollary}[Kishimoto \cite{Kishimoto}]
\label{gaspare}  Let $\gamma$ be an extremal   which is not contained in a strict subgroup. Then $t_\cut(\gamma)<\infty$.
\end{corollary}
\begin{proof}[Proof of Corollary~\ref{gaspare}]
If $\frac{\phi_2}{\phi_1}$ is rational, the result is contained in Proposition~\ref{brocco}. If instead, $\frac{\phi_2}{\phi_1}$ is  irrational and $r_1 r_2>0$,  let $(s_j)_{j\in\N}$ be the sequence from Theorem~\ref{sarebbe}. Then, we have $\gamma(s_1)\in C_4\subset\Cut(\F_4)$. If $\gamma$ minimizes on $[0, s_1]$, then $t_\cut(\gamma)=s_1<\infty$. If not, then we have $t_\cut(\gamma)<s_1<\infty$.
\end{proof}

In order to prove Theorem~\ref{sarebbe}, we need  to write the equivalent conditions~\eqref{primavera} or~\eqref{autunno}
of Theorem~\ref{invernizzi} at any time $s>0$. In the statement we use the functions~$A$ and $B$ defined in~\eqref{AB2}.
\begin{lemma}\label{aab}
Let $\gamma(\cdot, a, b, \phi)$ be an extremal with $r_1, r_2>0$ and $0<\phi_2<\phi_1$. Then  $\gamma(\cdot, a,b,\phi)$ satisfies the equivalent conditions~\eqref{primavera} or~\eqref{autunno} at time~$s>0$ if and only if
\begin{equation}\label{tempaccio}
\begin{aligned}
D(s,r_1, r_2, \phi_1, \phi_2 ):=  & A(\phi_1 s,\phi_2 s)r_1^4+\{B(\phi_1s, \phi_2 s)-B(\phi_2s, \phi_1 s)\}r_1^2 r_2^2
\\& - A(\phi_2 s,\phi_1 s)r_2^4=0.
\end{aligned}
\end{equation}\end{lemma}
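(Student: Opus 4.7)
The plan is to reduce the question of conjugacy at arbitrary time $s>0$ to the time-$1$ criterion already established in Theorem~\ref{invernizzi} (equivalently equation~\eqref{AB2}), by exploiting the scaling invariance of extremals.

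First, I would invoke the reparametrization property~\eqref{parametro}, $\gamma(s,a,b,\phi) =\gamma(1,sa,sb,s\phi)$, together with the clean form provided by Corollary~\ref{penna}. Using the orthonormal vectors $u_k^s := a_k^s/(sr_k)$ and $v_k^s := b_k^s/(sr_k)$ (which are orthonormal by construction, see~\eqref{tagliola}), formula~\eqref{calamo} can be rewritten as
\begin{equation*}
\gamma(s,a,b,\phi)=\Gamma(u^s,v^s,sr,s\phi),
\end{equation*}
where $\Gamma$ is the map from~\eqref{esselunga} and $(u^s,v^s,sr,s\phi)\in\Sigma\times\Omega$ for all $s>0$. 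In other words, the point $\gamma(s,a,b,\phi)$ coincides with the time-$1$ value of the generic extremal whose parameters are $\tilde r_k=sr_k$ and $\tilde\phi_k=s\phi_k$.

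Next, I would apply Theorem~\ref{invernizzi} (in the quadratic form~\eqref{AB2}) to these rescaled parameters. Substituting $\tilde\phi_k=s\phi_k$ and $\tilde r_k=sr_k$ into~\eqref{AB2} gives
\begin{equation*}
s^4\bigl\{A(s\phi_1,s\phi_2)r_1^4+[B(s\phi_1,s\phi_2)-B(s\phi_2,s\phi_1)]r_1^2r_2^2-A(s\phi_2,s\phi_1)r_2^4\bigr\}=0,
\end{equation*}
and, since $s>0$, dividing by $s^4$ yields exactly $D(s,r_1,r_2,\phi_1,\phi_2)=0$ as in~\eqref{tempaccio}. Note that the functions $A$ and $B$ depend only on the angular arguments, so only the substitution $\phi_k\mapsto s\phi_k$ acts on them, while the homogeneity of degree four in the radii is what absorbs the $s^4$ factor cleanly.

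To close the argument I must check that conditions~\eqref{primavera} and~\eqref{autunno}, applied ``at time $s$'' to $\gamma(\cdot,a,b,\phi)$, are the same as the corresponding conditions ``at time $1$'' for the rescaled extremal. This is essentially automatic: condition~\eqref{autunno} is a property of the point $(x(s),t(s))\in\F_4$ only, hence intrinsic; condition~\eqref{primavera} is written in terms of the matrix $M_1$ built from the generic parameters representing that point, and the equivalence with~\eqref{autunno} granted by Theorem~\ref{invernizzi} is independent of which admissible parametrization one uses. I expect no real obstacle here; the lemma is a direct corollary of Theorem~\ref{invernizzi} combined with the scaling $\gamma(s,a,b,\phi)=\gamma(1,sa,sb,s\phi)$, and the only thing worth recording explicitly is the degree-four homogeneity in the radii, which is what allows the clean $s^4$ factorization.
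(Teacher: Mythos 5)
Your argument is correct and is essentially the paper's own proof, just written out in more detail: the paper likewise starts from the representation $\gamma(s,a,b,\phi)=\Gamma(u^s,v^s,sr,s\phi)$ (formula~\eqref{ravenna}) and applies Theorem~\ref{invernizzi}/equation~\eqref{AB2} to the rescaled parameters, the $s^4$ homogeneity in the radii giving~\eqref{tempaccio}. Your explicit remark that conditions~\eqref{primavera} and~\eqref{autunno} are intrinsic to the point and its generic parametrization is a useful clarification of a step the paper leaves implicit.
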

\begin{proof}[Proof of Lemma~\ref{aab}]
Starting from~\eqref{ravenna}, it is then easy to see that the equivalent conditions~\eqref{primavera} and~\eqref{autunno} hold if and only if~\eqref{tempaccio} holds.
  \end{proof}

\begin{proof}[Proof of Theorem~\ref{sarebbe}]  Let  $r_1, r_2>0$ and $0<\phi_2<\phi_1$,  where  $\phi_1$ and $\phi_2$ are  rationally independent.  By Lemma~\ref{deboluccio} and Lemma~\ref{aab}, we
 must prove that there is a sequence $s_j\to+\infty$ such that
$D(s_j,r,\phi )=0$ for all $j\in\N$.
%
%
 Recall that $ A(\phi_1, \phi_2)=T_1 U_1 \big(T_1Z_{12}-T_2 U_1\big)$ and
 $B(\phi_1, \phi_2)= T_2 Z_{12} \big(T_1Z_{12} - T_2 U_1\big)$.
Denote $(x,y):=(\phi_1 s, \phi_2 s)$ below. Denote also as $P_d(x,y)$ a homogeneous polynomial of degree~$d$ in~$(x,y)$ and write $h(x,y)$ for a  function  bounded in $x,y$.
\begin{equation*}
\begin{aligned}
 A(x,y) & = \frac{\sin x}{x}\Big(\frac{x-\sin x\cos x}{4x^2}\Big)\Big[\frac{\sin x}{x}\cdot \frac{y\cos y \sin x - x \cos x \sin y}{2y(x^2-y^2)}\\& \qquad \qquad \qquad
\qquad \qquad \qquad  -\frac{\sin y}{y}\Big(\frac{x-\sin x\cos x}{4x^2}\Big)\Big]
\\&
= \Big(\frac{\sin x}{4 x^2}-\frac{\sin^2 x\cos x}{4x^3} \Big)
\Big[-\frac{\sin y}{4xy} +\frac{\sin x
\cos x \sin y}{4x^2 y}
\\&\qquad\qquad \qquad\qquad \qquad\qquad +\frac{\sin x}{x} \Big( \frac{y\cos y \sin x - x \cos x \sin y}{2y(x^2-y^2)} \Big)\Big].
\end{aligned}
\end{equation*}
Organizing terms in $A(x,y)$ by the homogenity degree of the denominators we get
\begin{equation*}
\begin{aligned}
 A(x,y) & =  -\frac{\sin x\sin y}{16 x^3 y}+\frac{\sin^2 x\cos x \sin y}{8x^4 y}
 +\frac{\sin^2 x}{ 4x^3}\Big[
 \frac{y\cos y \sin x - x \cos x \sin y}{2y(x^2-y^2)}
 \Big]+\frac{h(x,y)}{P_6(x,y)}.
\end{aligned}
\end{equation*}
Let us look at $B$.
\begin{equation*}
\begin{aligned}
 B(x,y)& =\frac{\sin y}{y}
\Big( \frac{y\cos y \sin x - x \cos x \sin y}{2y(x^2-y^2)}\Big)
\cdot\Big[ \frac{\sin x}{x}
\Big(\frac{y\cos y \sin x - x \cos x \sin y}{2y(x^2-y^2)}\Big)
\\& \qquad\qquad\qquad\qquad \qquad\qquad\qquad\qquad  \qquad -
\frac{\sin y}{y}
\Big(\frac{x-\sin x\cos x}{4x^2}\Big)
\Big]
\\& =-\frac{\sin^2 y}{8xy^3}
\Big(\frac{y\cos y \sin x - x \cos x \sin y}{ x^2-y^2 }\Big)
+\frac{h(x,y)}{P_6(x,y)},
\end{aligned}
\end{equation*}
which gives
\begin{equation*}
 B(x,y)-B(y,x)= \Big(\frac{y\cos y \sin x - x \cos x \sin y}{ x^2-y^2 }\Big)\Big[\frac{\sin^2 x}{8x^3y}-\frac{\sin^2 y}{8xy^3}\Big] +\frac{h(x,y)}{P_6(x,y)}.
\end{equation*}

Let us try first to keep only terms of homogeneity $-4$.
\begin{equation}\label{smacco}
\begin{aligned}
D(s, r_1, r_2, \phi_1, \phi_2)&= A(x,y)r_1^4 +\{B(x,y)-B(y,x) \}r_1^2 r_2^2 - A(y,x)r_2^2
\\& =    \frac{\sin x\sin y}{16 x y }\Big[\frac{r_2^4}{y^2} -\frac{r_1^4}{x^2} \Big]+\frac{h(x,y, r_1, r_2)}{P_5(x,y)}+\frac{h(x,y, r_1, r_2)}{P_6(x,y)}
\\& =\frac{1}{s^4}\frac{\sin (\phi_1 s)\sin(\phi_2 s)}{16 \phi_1\phi_2   } \Big[\frac{r_2^4}{ \phi_2^2  } -\frac{r_1^4}{\phi_1^2  } \Big]+\frac{h(s)}{s^5 P_5( \phi_1,\phi_2)} +\frac{h(s)}{s^6 P_6( \phi_1,\phi_2)} ,
\end{aligned}
\end{equation}
where $h $ depends also on $(\phi_1, \phi_2, r_1, r_2)$ and is bounded globally.
At this point, if
$ \frac{r_2^4}{ \phi_2^2  } -\frac{r_1^4}{\phi_1^2  } \neq 0
$, we claim that there are sequences $s_n^+$ and $s_n^-\to +\infty$ such that
$ D(s_n^-, r_1, r_2, \phi_1, \phi_2)<0<D(s_n^+, r_1, r_2, \phi_1, \phi_2 )$ for all $n\in\N$.
This will imply that there is a sequence $s_n\to +\infty$ of zeros of $D$.
To see that, since $\phi_1$ and $\phi_2$ are rationally independent, letting $\s:=\phi_2 s$,  we get  $\sin(\phi_2 s)\sin(\phi_1 s)=\sin\s\sin(\a\s)$, where $\a\notin\Q$. Take $\s_n=\frac{\pi}{2}+ 2\pi n$, so that   $\sin \s_n=1$ for all $n\in\N$. Then, since $\a\notin\Q$, the sequence  $\sin(\a \s_n) = \sin \Big(2\pi \a n + \frac{\a\pi}{2}\Big)$ with $n\in\N$ is dense in $[-1,1]$ (by standard properties of irrational flows on torus, see~\cite[p.~26]{Jost}). Therefore, taking a subsequence $\s_n^+$ such that $\sin(\a\s_n^+)\to 1$, we have $\sin(\s_n^+)\sin(\a\s_n^+)\to 1$, a strictly positive bound. An analogous argument gives us a sequence~$\s_n^-$ such that $\sin (\s_n^-)\sin (\a\s_n ^-)\to -1$ and the claim is proved.

Let us pass now to case $ \frac{r_2^4}{ \phi_2^2  } -\frac{r_1^4}{\phi_1^2  }=0$. We must take into account the term $\frac{h(x,y)}{P_5(x,y)}$ appearing in~\eqref{smacco}. Using the expansions of $A$ and $B$ obtained above, we get
 \begin{equation*}
\begin{aligned}
 A&(x,y) r_1^4+\{B(x,y)-B(y,x)\}r_1^2 r_2^2-A(y,x) r_2^4
 \\
 &=\Big\{\frac{\sin^2 x\cos x\sin y}{8x^4 y}
 +\frac{\sin^2 x}{8x^3 y}
 \Big( \frac{y\cos y \sin x - x \cos x \sin y}{ x^2-y^2 } \Big)
 \Big\}r_1^4
 \\
 &\quad + \Big\{
 \frac{y\cos y \sin x - x \cos x \sin y}{  x^2-y^2 }
 \Big\}
 \Big[\frac{\sin^2 x}{8x^3 y}-\frac{\sin^2 y}{8x y^3}\Big]
 r_1^2 r_2^2
 \\&\qquad
 - \Big\{\frac{\sin x \sin^2 y\cos y}{8x  y^4}
 +\frac{\sin^2 y}{8x  y^3}
 \Big( \frac{y\cos y \sin x - x \cos x \sin y}{ x^2-y^2 } \Big)
 \Big\}r_2^4 +\frac{h(x,y,r_1, r_2)}{P_6(x,y)}.
\end{aligned}
\end{equation*}
Passing to   $(x,y)=(\phi_1 s, \phi_2 s) $ and writing  $ \phi_k^s:=s\phi_k $ we get
\begin{equation*}\begin{aligned}
&  D(s , r_1,r_2,\phi_1,\phi_2)
 \\&=\Big\{
 \frac{\sin^2 \phi_1^s \cos \phi_1^s  \sin \phi_2^s }{8\phi_1^4\phi_2s^5}
 +\frac{\sin^2 \phi_1^s }{8\phi_1^3\phi_2 s^4 }
 \Big(\frac{\phi_2\cos\phi_2^s\sin\phi_1^s -\phi_1\cos\phi_1^s\sin\phi_2^s}{s(\phi_1^2-\phi_2^2)}\Big)
 \Big\}r_1^4
 \\&
 +\Big(\frac{\phi_2\cos\phi_2^s\sin\phi_1^s -\phi_1\cos\phi_1^s\sin\phi_2^s}{s(\phi_1^2-\phi_2^2)}\Big)
 \Big\}
 \Big[\frac{\sin^2\phi_1^s}{8\phi_1^3\phi_2 s^4}-
 \frac{\sin^2\phi_2^s}{8\phi_1\phi_2^3  s^4}\Big] r_1^2 r_2^2
 \\&
 -\Big\{\frac{\sin\phi_1^s\sin^2\phi_2^s\cos\phi_2^s}{8\phi_1\phi_2^4 s^5}
+ \frac{\sin^2\phi_2^s}{8\phi_1\phi_2^3s^4}
  \Big(\frac{\phi_2\cos\phi_2^s\sin\phi_1^s -\phi_1\cos\phi_1^s\sin\phi_2^s}{s(\phi_1^2-\phi_2^2)}\Big)
 \Big\}r_2^4+\frac{h(s,\phi,r)}{s^6P_6(\phi,r)}.
\end{aligned}
\end{equation*}
Multiply by  $\frac{\phi_1^2}{r_1^4}$ and eliminate  $r_2$ by $r_2^2=\frac{\phi_2}{\phi_1}r_1^2$.
\begin{equation*}
\begin{aligned}
& \frac{\phi_1^2}{r_1^4} D(s , r_1,r_2,\phi_1,\phi_2)
\\ = &\frac{1}{s^5}
\bigg\{\frac{\sin^2\phi_1^s\cos\phi_1^s\sin\phi_2^s}{8\phi_1^2\phi_2}
+ \frac{\sin^2\phi_1^s}{8\phi_1\phi_2}
 \Big(\frac{\phi_2\cos\phi_2^s\sin\phi_1^s -\phi_1\cos\phi_1^s\sin\phi_2^s}{\phi_1^2-\phi_2^2}\Big)
\\&\quad
  +\Big(\frac{\phi_2\cos\phi_2^s\sin\phi_1^s -\phi_1\cos\phi_1^s\sin\phi_2^s}{\phi_1^2-\phi_2^2}\Big)
\Big[\frac{\sin^2\phi_1^s}{8\phi_1^2}-
\frac{\sin^2\phi_2^s}{8\phi_2^2}\Big]
\\&\quad
  -
\frac{\sin \phi_1^s\sin^2\phi_2^s\cos\phi_2^s }{8\phi_1\phi_2^2}
- \frac{\sin^2\phi_2^s}{8\phi_1\phi_2}
 \Big(\frac{\phi_2\cos\phi_2^s\sin\phi_1^s -\phi_1\cos\phi_1^s\sin\phi_2^s}{\phi_1^2-\phi_2^2}\Big)
\bigg\}
+\frac{h(s,\phi_1, \phi_2)}{s^6P_4(\phi_1, \phi_2)}.
\end{aligned}
\end{equation*}
Take now the sequence $s_n=\frac{n\pi}{\phi_2}$, so that  $\sin(\phi_2 s_n)=0 $  and  we get
\begin{equation*}
\begin{aligned}
& \frac{\phi_1^2}{r_1^4}D(s_n,r_1, r_2, \phi_1, \phi_2)
 %
 \\&=\frac{1}{s_n^5}\cdot\frac{\phi_2}{8\phi_1(\phi_1^2-\phi_2^2)}\Big(\frac{1}{\phi_2}+\frac{1}{\phi_1}\Big)\cos(\phi_2 s_n)\sin^3(\phi_1s_n)+\frac{h(s_n,\phi_1,\phi_2)}{s_n^6 P_4(\phi_1,\phi_2)}.
 \end{aligned}
\end{equation*}
Since we are assuming $\phi_1>\phi_2$, it turns out that the sign of $D$, for large $n$ is the same of
\begin{equation*}
 \sin^3(\phi_1 s_n)\cos(\phi_2 s_n)= (-1)^{n}\sin^3 \Big(\frac{\phi_1}{\phi_2}n\pi\Big).
\end{equation*}
We must find two subsequences, one converging  to a positive limit and the other to a negative one.
Since $\phi_1$ and $\phi_2$ are  rationally independent, we  use again the standard fact that for all  $\a\notin\mathbb{Q}$ we have  $\liminf_{n\to +\infty} \sin (\a n\pi)=-1$ and $\limsup_n\sin(\a n\pi)=+1$ (see again~\cite{Jost}).    The proof is easily concluded.
\end{proof}

\appendix
 \section{}

\begin{proof}[Proof of Lemma~\ref{cambiobase}]
 Denoting $c_k = \cos \phi_k$ and $s_k=\sin\phi_k$, we calculate  the columns $K_1, \dots, K_{10}\in\R^{16}\times \R^4$ of the differential of~$R$. We start with the columns with derivatives $D_{\circlearrowleft}$ on $\Sigma$.
\begin{equation*}
\begin{aligned}
K_1=  D_{\circlearrowleft x_1 y_1}R & =\big( (y_1s_1+ x_1 c_1, y_1 c_1- x_1 s_1,0,0), (0,0,0,0)\big)\in \R^{16} \times\R^4
  \\
K_2=   D_{\circlearrowleft x_1 x_2}R   & =\big( (x_2 s_1,x_2 c_1,-x_1 s_2 ,-x_1 c_2),(0,0,0,0)\big)
  \\
 K_3= D_{\circlearrowleft x_1 y_2}R & =\big(( y_2s_1, y_2 c_1, x_1c_2, -x_1s_2),(0,0,0,0)\big)
 \\
 K_4= D_{\circlearrowleft y_1 x_2}R & =\big((   -x_2  c_1  , x_2 s_1, -y_1 s_2, -y_1 c_2),(0,0,0,0)\big)
\\
 K_5= D_{\circlearrowleft y_1 y_2}R & =\big((-y_2c_1, y_2 s_1, y_1 c_2 , -y_1s_2),(0,0,0,0)\big)
\\
 K_6 = D_{\circlearrowleft x_2 y_2}R & =\big((0,0, y_2 s_2+x_2c_2,y_2c_2-x_2 s_2),(0,0,0,0)\big).
 \end{aligned}
\end{equation*}
The remaining four derivatives have the form
\begin{equation*}
\begin{aligned}
K_7= \p_{r_1 }R  & =((0,0,0,0) ,(1,0,0,0))
 \qquad  K_{8}=\p_{r_2 }R  & =((0,0,0,0) ,(0,1,0,0))
 \\
K_9=\p_{\phi_1 }R  & =((*,*,0,0) ,(0,0,1,0))
 \qquad
K_{10}=\p_{\phi_2 }R  & =((0,0,*,*) ,(0,0,0,1)).
\end{aligned}
\end{equation*}
The precise form of $*$ plays no role in the rank of the differential.

We first claim that $ K_1, \dots, K_6 \in\R^{16}\times\{0\}$ are independent.  Once the claim is proved, it will follows immediately from the form of $K_7, \dots, K_{10}$ that the rank of the differential is maximal.

To prove the claim,   note that equation  $\sum_{j=1}^6\la_j K_k=0 $ is equivalent to
\begin{equation*}
\begin{aligned}
 \la_1(y_1 s_1+x_1 c_1)+\la_2 x_2 s_1 +\la_3 y_{2} s_1-\la_4 x_2 c_1-\la_5 y_2 c_1\stackrel{E_1 }{=}0
\\
\la_1 (y_1 c_1-x_1 s_1)+\la_2 x_2 c_1+\la_3 y_2 c_1 +\la_4 x_2 s_1+\la_5 y_2 s_1
\stackrel{E_2 }{=}0
\\
-\la_2 x_1 s_2 +\la_3 x_1 c_2 -\la_4 y_1 s_2 +\la_5 y_1 c_2 +\la_6(y_2 s_2 +x_2 c_2)
\stackrel{E_3 }{=}0
\\ -\la_2 x_1 c_2 -\la_3 x_1 s_2 -\la_4 y_1 c_2 -\la_5 y_1 s_2 +\la_6 (y_2 c_2 -x_2 s_2)
\stackrel{E_4 }{=}0.
\end{aligned}
\end{equation*}
Recall first that $x_1, y_1, x_2, y_2$ are pairwise orthonormal and that $c_1^2+s_1^2=1$.
Projecting $E_1$ and $E_2$ along $x_1$ we see immediately that~$\la_1=0$. Projecting $E_5$ and $E_6$ along $x_2$ we get $\la_6=0$. Project then $E_1$ and $E_2$ along $x_2$. This gives $\la_2=\la_4=0$, because $c_1^2 + s_1^2=1$. For the same reason, projecting along $y_2$ $E_1$ and $E_2$ we discover that $\la_3=\la_5=0$.
\end{proof}

\begin{proof}[Proof of Theorem~\ref{magno}]
  We begin with the six derivatives along tangent directions to $\Sigma$. Then we will calculate the remaining four derivatives $\p_{r_j} $ and $\p_{\phi_j}$. We use the notation~\eqref{astuccio}.

Let us start by rotating the pair $u_1, v_1$. Note that this gives $(u_1\wedge v_1)'(0)=0$. It turns out that
the derivative $   D_{\circlearrowleft  u_1 v_1 }\Gamma$  gives
\begin{equation*}
\begin{aligned}
   D_{\circlearrowleft  u_1 v_1 }\Gamma &
   =\big( -r_1T_1 u_1, r_1r_2 Z_{12} v_1\wedge v_2+ r_1 r_2 Z_{21} u_1\wedge u_2\big)
   \\&= (-T_1 \a_1, Z_{21}\a_1\wedge \a_2+ Z_{12}\b_1\wedge \b_2)
\end{aligned}
\end{equation*}
(recall that $\a_j = r_j u_j$ and $\b_j = r_j v_j$).
Exchanging indices $1$ and $2$, we get
$   D_{\circlearrowleft u_2 v_2}\Gamma
     = (-T_2 \a_2,- Z_{12 }\a_1\wedge \a_2-Z_{21}\b_1\wedge \b_2)
$.
To get the third column of the differential of $\Gamma$ we rotate   $u_1$ and $v_2$. Here we have  $u_1'(0)= v_2$, $v_2'(0)=-u_1$ and $(u_1\wedge v_2)'(0)=0$. Thus
 \begin{equation*}
\begin{aligned}
 D_{\circlearrowleft  u_1 v_2 }\Gamma &
 =(-r_2 T_2 u_1, -r_1^2 U_1 v_1\wedge v_2 + r_2^2 U_2 u_1\wedge u_2)
 \\&
=\frac{1}{r_1 r_2}\big(-r_2^2 T_2\a_1, r_2^2 U_2\a_1\wedge \a_2 -r_1^2 U_1\b_1\wedge \b_2 \big).
\end{aligned}
\end{equation*}
 The fourth column can be obtained from the third exchanging $1$ and $2$:
 \begin{equation*}
  D_{\circlearrowleft  u_2 v_1 } =\frac{1}{r_1 r_2}  ( - r_1^2 T_1 \a_2,
  -r_1^2  U_1 \a_1\wedge \a_2 + r_2^2U_2\b_1\wedge \b_2).
 \end{equation*}
The fifth and the sixth columns take the form
\begin{equation*}
\begin{aligned}
 D_{{\circlearrowleft  u_1 u_2 }}\Gamma &= (0, r_1^2 U_1 u_2\wedge v_1+ r_1 r_2 Z_{12}u_2\wedge v_2 - r_1 r_2Z_{21}
 u_1\wedge v_1 -r_2^2 U_2 u_1\wedge  v_2  )
 \\&=\frac{1}{r_1 r_2}\big( 0, r_1^2 U_1 \a_2\wedge \b_1 + r_1^2Z_{12} \a_2\wedge \b_2
 -r_2^2 Z_{21} \a_1\wedge\b_1 - r_2^2 U_2\a_1\wedge \b_2\big).
 \end{aligned}
\end{equation*}
and
\begin{equation*}
D_{{\circlearrowleft  v_1 v_2 }}\Gamma =\frac{1}{r_1 r_2}\big(-r_2^2 T_2 \b_1+r_1^2 T_1 \b_2,
-r_2^2 Z_{12} \a_1\wedge \b_1 + r_1^2 U_1 \a_1 \wedge \b_2 + r_2^2 U_2 \b_1\wedge \a_2 + r_1^2 Z_{21}\a_2\wedge \b_2\big).
\end{equation*}
Derivatives with the variables $r_1$ and $r_2$ are
\begin{equation*}
\begin{aligned}
\p_{r_1}\Gamma & =
\big(T_1 v_1, 2r_1 U_1 u_1\wedge v_1+ r_2Z_{12} u_1\wedge v_2 +r_2Z_{21}u_2\wedge v_1
\big)
\\&=\frac
{1}{r_1}
  \big( T_1\b_1,2U_1 \a_1\wedge \b_1+Z_{12}\a_1\wedge \b_2 -Z_{21}\b_1\wedge \a_2\big),\quad\text{and}
  \\
  \p_{r_2}\Gamma &=\frac{1}{r_2}
\big( T_2 \b_2 ,  Z_{12}\a_1\wedge \b_2 -Z_{21} \b_1\wedge \a_2 + 2 U_2 \a_2\wedge \b_2\big).
\end{aligned}
\end{equation*}
 The last two columns can be obtained by differentiating along $\phi_1$ and $\phi_2$.
Using formulas for differentiating $T$ and $U$ from~\cite{MontanariMorbidelli17}, $T'(\phi)=-2V(\phi)$
and $U'(\phi)=\frac{\cos\phi}{\phi} V(\phi)$,  we get
\begin{equation*}
\begin{aligned}
 \p_{\phi_1}\Gamma &= \Big(-2V_1\b_1, \frac{\cos\phi_1}{\phi_1}V_1\a_1\wedge\b_1
 +(\p_1Z)(\phi_1,\phi_2)\a_1\wedge\b_2+(\p_2Z)(\phi_2, \phi_1)\a_2\wedge\b_1\Big)
 \\ \p_{\phi_2}\Gamma
 &= \Big(-2V_2\b_2,(\p_2Z)(\phi_1,\phi_2)\a_1\wedge\b_2+(\p_1Z)(\phi_2,\phi_1)\a_2\wedge\b_1
 +\frac{\cos\phi_2}{\phi_2}V_2\a_2\wedge \b_2\Big).
\end{aligned}
\end{equation*}
Collecting all computations of the ten columns and ignoring the positive terms $\frac{1}{r_1},\frac{1}{r_2}$ and $\frac{1}{r_1 r_2}$, we get the matrix in~\eqref{dieci}, as desired.

In order to prove the second part, it suffices to observe that the matrix in~\eqref{dieci} has the block form described by the following inclusions. Let $W_k$ be the $k$-th column. Then  $ W_1, W_2, W_3 , W_4\in  \Span \{\a_1, \a_2, \a_1\wedge\a_2,\b_1\wedge\b_2\}$, while    $W_5,W_6,\dots , W_{10} \in  \Span\{\b_1,\b_2,  \a_1\wedge\b_1,\a_1\wedge\b_2, \b_1\wedge\a_2,\a_2\wedge\b_2\}$.
\end{proof}

\section*{Declaration about Conflict of interest}
On behalf of all authors, the corresponding author states that there is no conflict of interest.

\section*{Acknowledgements}
   We thank the anonymous referee for the careful reading of the manuscript and for giving us several  useful suggestions on the presentation of the paper.

The authors are   supported by PRIN 2022 F4F2LH - CUP J53D23003760006 ``Regularity problems in sub-Riemannian structures''.

 The authors are also members of the {\it Gruppo Nazionale per
l'Analisi Matematica, la Probabilit\`a e le loro Applicazioni} (GNAMPA)
of the {\it Istituto Nazionale di Alta Matematica} (INdAM).

 \def\cprime{$'$} \def\cprime{$'$}
\providecommand{\bysame}{\leavevmode\hbox to3em{\hrulefill}\thinspace}
\providecommand{\MR}{\relax\ifhmode\unskip\space\fi MR }
\providecommand{\MRhref}[2]{%
  \href{http://www.ams.org/mathscinet-getitem?mr=#1}{#2}
}
\providecommand{\href}[2]{#2}


\end{document}